\newtheorem{thm}{Theorem}[section]
\newtheorem{cor}[thm]{Corollary}
\newtheorem{prop}[thm]{Proposition}
\newtheorem{conj}[thm]{Conjecture}
\theoremstyle{definition}
\newtheorem{defn}[thm]{Definition}
\newtheorem{defns}[thm]{Definitions}
\newtheorem{notn}[thm]{Notation}
\theoremstyle{remark}
\newtheorem{rem}[thm]{Remark}
\let\c@equation\c@thm
\numberwithin{equation}{section}
\DeclareMathOperator*{\type}{type}
\DeclareMathOperator*{\ord}{ord}
\newcommand{\cpx}[1]{\|#1\|}
\newcommand{\acl}{\ell}
\newcommand{\dft}{\delta}
\newcommand{\D}{\mathscr{D}}
\newcommand{\Dst}{\D_\st}
\newcommand{\clDx}{\overline{\D}}
\newcommand{\clD}{\overline{\Dst}}
\newcommand{\Da}[1]{\D^{#1}_{\mathrm{st}}}
\newcommand{\Dax}[1]{\D^{#1}}
\newcommand{\clDa}[1]{\overline{\Da{#1}}}
\newcommand{\clDax}[1]{\overline{\Dax{#1}}}
\newcommand{\st}{\mathrm{st}}
\newcommand{\xpdd}[1]{\hat{#1}}
\newcommand{\val}{\mathrm{val}}
\newcommand{\N}{{\mathbb N}}
\newcommand{\Z}{{\mathbb Z}}
\newcommand{\Q}{{\mathbb Q}}
\newcommand{\Nn}{\Z_{\ge0}}
\newcommand{\sS}{{\mathcal S}}
\title{Integer complexity: Stability and self-similarity}
\author{Harry Altman and Juan Arias de Reyna}
\date{October 16, 2025}
\begin{document}

\begin{abstract}
Define $\cpx{n}$ to be the \emph{complexity} of $n$, the smallest number of ones
needed to write $n$ using an arbitrary combination of addition and
multiplication (the smallest number of $1$'s in a $(1,+,\cdot)$-expression for
$n$).  The set $\D$ of \emph{defects}, differences $\dft(n):=\cpx{n}-3\log_3 n$,
is known to be a well-ordered subset of $[0,\infty)$, with order type
$\omega^\omega$.  This is proved by showing that, for any $s$, there is a finite
set $\sS_s$ of certain multilinear polynomials, called low-defect polynomials,
such that $\dft(n)\le s$ if and only if one can write $n =
f(3^{k_1},\ldots,3^{k_r})3^{k_{r+1}}$ for some $f\in\sS_s$. \cite{paperwo,
theory}

In this paper we show that, in addition to it being true that $\D$ (and thus
$\overline{\D}$) has order type $\omega^\omega$, this set satisifies a sort of
self-similarity property, with $\overline{\D}' = \overline{\D} + 1$.  This is
proven by restricting attention to \emph{substantial} low-defect polynomials,
ones that can be themselves written efficiently in a certain sense, and showing
that in a certain sense the values of these polynomials at powers of $3$ have
complexity equal to the na\"ive upper bound most of the time.

As an application, we also prove that, under appropriate conditions on $a$ and
$b$, numbers of the form $b(a3^k+1)3^\ell$ will, for all sufficiently large $k$,
have complexity equal to the na\"ive upper bound.  These results resolve various
earlier conjectures of the second author \cite{Arias}.
\end{abstract}

\maketitle

\section{Introduction}

The \emph{integer complexity} of a natural number $n$,
denoted $\cpx{n}$, is the least number of $1$'s needed to write it using any
combination of addition and multiplication, with the order of the operations
specified using  parentheses grouped in any legal nesting.  For instance, $n=11$
has a complexity of $8$, since it can be written using $8$ ones as \[
11=(1+1+1)(1+1+1)+1+1,\] but not with any fewer than $8$.  This notion was
introduced by Kurt Mahler and Jan Popken \cite{MP} and by Richard Guy \cite{Guy,
UPINT}.

Integer complexity is approximately logarithmic, and in particular it satisfies the bounds
\begin{equation*}\label{eq1}
3 \log_3 n= \frac{3}{\log 3} \log  n\le \cpx{n} \le \frac{3}{\log 2} \log n =
3\log_2 n
,\qquad n>1.
\end{equation*}
The lower bound can be deduced from the results of Mahler and Popken, and was
explicitly proved by John Selfridge \cite{Guy}. It is attained with equality for
$n=3^k$ for all $k \ge1$.
The upper bound by contrast is not sharp; see \cite{upbds} for more on that, and
see \cite{Amano} regarding better upper bounds that work only for most inputs.
See also \cite{He} regarding algorithms for computing integer complexity.

Based on the lower bound, earlier work \cite{paper1} introduced the
notion of the \emph{defect} of $n$, denoted $\dft(n)$, which is defined to be
the difference $\cpx{n}-3\log_3 n$.
\begin{defn}
We define $\D$ to be the set of all defects,
\[ \D = \{ \dft(n) = \cpx{n}-3\log_3 n: n \ge 1 \}.\]
\end{defn}
This set has some unexpected structure:

\begin{thm}[\cite{paperwo}]
\label{oldwo}
The set $\D$ of defects of all natural numbers is a well-ordered subset of the
real line, with order type $\omega^\omega$.  Moreover, for $k\ge 1$, the set
$\D\cap[0,k)$ has order type $\omega^k$.
\end{thm}

In this paper, we show that this set has additional structure beyond that, and
apply this to get lower bounds on the complexity of numbers of the form
$b(a3^k+1)3^\ell$.

\subsection{Self-similarity of the defect set}

As mentioned above, the set $\D$ of all defects is a well-ordered
subset of the real line, with order type $\omega^\omega$.
Moreover, it is known that the limit of the initial $\omega^k$ defects of $\D$
occurs at precisely $k$. \cite{paperwo}

In this paper we will frequently work with the closure $\clDx$ instead of
directly with the defect set $\D$.  This change of emphasis
simplifies many statements.  The set $\clDx$ also has a
structural characterization which will be given in Theorem~\ref{closure-intro}.

The main theorem of the paper may be stated in these terms:

\begin{thm}[Self-similarity theorem]
\label{selfsim-intro}
\[\clDx' = \clDx + 1\]
\end{thm}

Here $S'$ denotes the derived set (that is, the set of limit points) of $S$. Although $\clDx' = \D'$, we write $\clDx'$ so that we have the same set on both
sides of the equation.

This theorem tells us that the set $\clDx$ has a ``self-similarity''
property; if one shifts it over by $1$ one obtains its limit points.  So, the
original set $\clDx$ can be obtained by taking the set $\clDx+1$ and attaching a
``tail'' to the left of each point to make it a limit point (at least, if one
knows where to put the points in this ``tail'').

A striking illustration is provided by looking at the sets $\clDx \cap [k,
k+1]$; or better yet, if one translates them to obtain $(\clDx \cap [k,
k+1])-k$.  Then each set in the sequence looks like the previous, except that
each point has sprouted a tail and become a limit point.

Let note some other ways that this theorem may be understood.  We begin by
introducing some notation.

\begin{notn}
\label{index}
Given a well-ordered set $S$ and an ordinal $\alpha$, we will use $S[\alpha]$ to denote the $\alpha$'th
element of $S$; and when $\alpha>0$, will use $S(\alpha)$ to denote
$S[-1+\alpha]$.  (Here, for $\alpha>0$, $-1+\alpha$ denotes the unique $\beta$
such that $1+\beta=\alpha$.  So if $\alpha>0$ is finite, then
$-1+\alpha=\alpha-1$ and $S(\alpha)=S[\alpha-1]$, and if $\alpha$ is infinite,
then $-1+\alpha=\alpha$ and $S(\alpha)=S[\alpha]$.)
\end{notn}

The reason for introducing the $1$-indexed $S(\alpha)$ notation instead of
always using the $0$-indexed $S[\alpha]$ notation is that we want to think of
closed sets as being $1$-indexed.   (Here, the term ``indexing'' is being used
with its meaning from computer programming; to say we are $0$-indexing means
that the initial point gets an index of $0$, while to say we are $1$-indexing
means it gets an index of $1$.) This is because, for $S\subseteq\mathbb{R}$
well-ordered, we have \[\overline{S}(\alpha)=\sup_{\beta<\alpha}S[\beta]\]
whenever $\alpha$ is a limit ordinal, $\alpha$ is finite, or $S$ is discrete;
see Proposition~\ref{explainingclosures}.  (The set $\D$ is not discrete, but in
Section~\ref{stableintro} we will introduce a variant of it, $\Dst$, that we
will show is discrete (Corollary~\ref{closure-weak}), with $\clD=\clDx$
(Theorem~\ref{closure}); and much of the paper will be written in terms of
$\Dst$.)  Because of this, we will frequently write equations in terms of
$\clDx(\alpha)$ for convenience, but it should be kept in mind that this is
encoding a supremum over $\D$ or $\Dst$ itself; i.e., such statements are as
much about $\D$ and $\Dst$ as they are about $\clDx$.

With this notation, the second half of Theorem~\ref{oldwo} may be written
\begin{equation}
\label{omegak}
\clDx(\omega^k)= \sup_{\alpha<\omega^k}\D[\alpha] = k.
\end{equation}

Now, for another point of view on this equation, we might also write it as
\[\clDx(\omega^k \cdot 1) = k = 0 + k = \clDx(1) + k.\]

With this point of view, Equation~\eqref{omegak} may be seen as a special case
of Theorem~\ref{selfsim-intro}, which we may rephrase as follows:

\begin{thm}[Combined index-value shift relation]
\label{cj8weak-intro}
Given $1\le \alpha<\omega^\omega$ an ordinal and $k$ a whole number,
\[\clDx(\omega^k \alpha)=\clDx(\alpha)+k.\]
\end{thm}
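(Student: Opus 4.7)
The plan is to reduce the theorem to its $k=1$ case, which is essentially the self-similarity identity $\clDx' = \clDx + 1$ promised in the abstract. First I would establish a general lemma: for any closed, well-ordered subset $S \subseteq \R$ of order type $\omega^\omega$, the $1$-indexed enumerations satisfy $S'(\alpha) = S(\omega\alpha)$ for all $1 \le \alpha < \omega^\omega$. This is because, since $S$ is closed in $\R$ and well-ordered, $S[\beta]$ is a topological limit point of $S$ if and only if $\beta$ is a limit ordinal (successor-indexed and zero-indexed points have immediate neighbors on both sides in $S$, hence are isolated; limit-indexed points equal the supremum of their proper initial segments, which lies in $S$ by closedness). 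The map $\beta \mapsto \omega\beta$ is an order isomorphism from $\{\beta : 1 \le \beta < \omega^\omega\}$ onto the nonzero limit ordinals below $\omega^\omega$ (using $\omega \cdot \omega^\omega = \omega^\omega$), and translating into 1-indexed notation yields $S'(\alpha) = S(\omega\alpha)$.

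Granting the self-similarity identity $\clDx' = \clDx + 1$ that the body of the paper establishes, the $k=1$ case is immediate: translation by $1$ is order-preserving, so $(\clDx + 1)(\alpha) = \clDx(\alpha) + 1$, and combining with the lemma yields
\[
\clDx(\omega\alpha) = \clDx'(\alpha) = \clDx(\alpha) + 1.
\]
The general statement then follows by induction on $k$. The base case $k=0$ is trivial. For the inductive step, I would observe that $1 \le \omega^k\alpha < \omega^\omega$ whenever $1 \le \alpha < \omega^\omega$ (since $\omega^k \cdot \omega^\omega = \omega^{k+\omega} = \omega^\omega$), so the $k=1$ identity applies with argument $\omega^k\alpha$, giving
\[
\clDx(\omega^{k+1}\alpha) = \clDx(\omega \cdot \omega^k\alpha) = \clDx(\omega^k\alpha) + 1 = \clDx(\alpha) + k + 1.
\]

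The hard part, unsurprisingly, is the $k=1$ case: the self-similarity identity $\clDx' = \clDx + 1$ itself. This is the novel content of the paper and will require the analysis of substantial low-defect polynomials previewed in the abstract. The main input should be that, for substantial $f$, the values $f(3^{k_1},\ldots,3^{k_r})3^{k_{r+1}}$ achieve the na\"ive complexity upper bound for most choices of exponents, allowing one to pin down the defects of these values precisely and identify the limit points of $\clDx$ with $\clDx + 1$. By comparison, the ordinal-topological reductions in the first two paragraphs are routine.
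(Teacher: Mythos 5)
Your proposal is correct and follows essentially the same route as the paper: the paper likewise reduces Theorem~\ref{cj8weak-intro} to the identity $\clD'=\clD+1$ (Corollary~\ref{selfsim}, proved via substantial polynomials) combined with the fact that the limit points of a closed well-ordered set are exactly its $\omega\beta$-indexed points (Proposition~\ref{limitpoints}), then iterates to general $k$ and transfers from $\clD$ to $\clDx$ via $\clD=\clDx$. Your preliminary lemma is exactly the paper's Proposition~\ref{limitpoints} plus the $1$-indexing bookkeeping, and your identification of the $k=1$ self-similarity statement as the genuinely hard input is accurate.
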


This statement may be further strengthened.  This theorem, and the equivalent
Theorem~\ref{selfsim-intro}), discuss what happens for $\clDx$ as a whole.
In previous papers the set $\D$ was broken down into sets $\Dax{u}$; here,
$\Dax{u}$ is the set of defects of numbers $n$ with $\cpx{n}\equiv u\pmod{3}$.
(See \cite{paperwo} and \cite[Section~1.5]{intdft} for more about the reason
for this decomposition.)

These individual components are a little harder to work with than $\D$ as a
whole.  In \cite{intdft} it was proven that Equation~\eqref{omegak} extends
to them however:
\begin{equation}
\label{omegak3}
\clDax{u+k}(\omega^k) = \clDax{u}(1) + k.
\end{equation}

Here we go further, proving the analogue of Theorem~\ref{cj8weak-intro} for the
component sets, which appeared earlier as \cite[Conjecture~8]{Arias}:

\begin{thm}[Split index-value shift relation]
\label{cj8}
Given $1\le \alpha<\omega^\omega$ an ordinal, $k$ a whole number, and $u$ a
 congruence class modulo $3$,
\[\clDax{u+k}(\omega^k \alpha)=\clDax{u}(\alpha)+k.\]
\end{thm}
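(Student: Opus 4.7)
The plan is to lift the identity of Theorem~\ref{cj8weak-intro} to the individual mod-$3$ components $\clDax{u}$ by carefully tracking the residue class of $\cpx{n}\bmod 3$ through the substantial low-defect polynomial construction. The base case $\alpha=1$ is already known: equation~(\ref{omegak3}) above gives $\clDax{u+k}(\omega^k) = \clDax{u}(1)+k$, so the goal is really to propagate this from position $1$ to arbitrary $\alpha<\omega^\omega$.

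The first main step is to exploit the technical heart of this paper --- namely, that values of substantial low-defect polynomials at powers of $3$ attain their naive upper bound most of the time. For each substantial polynomial $f$ of degree $r$ in complexity class $u$ (i.e., whose naive specialization complexities all lie in class $u$ modulo $3$), this should force the closed set of $f$-defects together with its limit point $\dft_f$ to constitute a genuine copy of $\omega^r+1$ inside $\clDax{u}$. Dually, one needs that every element of $\clDax{u}$ arises this way from some substantial polynomial of appropriate class, so that the stratification of $\clDax{u}$ by such polynomials is exhaustive.

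Second, I would introduce a shift construction: from a substantial polynomial $f$ of degree $r$ in class $u$, build an augmented substantial polynomial $g$ of degree $r+k$ in class $u+k$ with $\dft_g = \dft_f + k$. Concretely, $g$ is obtained by multiplying $f$, in a fresh set of $k$ variables, by a substantial degree-$k$ polynomial of defect $k$ in the appropriate class (the simplest candidate being an iterated product of factors of the form $(x_i+1)$); under substantiality the complexities of specializations add, so the defects add and the class shifts by $+k$. The $\omega^k$-block of defects contributed by the new variables of $g$ above a fixed specialization of $f$ then accumulates at $\dft_f + k$ inside $\clDax{u+k}$.

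A transfinite induction on $\alpha$ in Cantor normal form combines these ingredients: each defect at ordinal position $\alpha$ in $\clDax{u}$ is carried to a defect at position $\omega^k\alpha$ in $\clDax{u+k}$, shifted upward by $k$, yielding the claimed identity. The main obstacle will be verifying that no \emph{extra} defects of class $u+k$ appear in $\clDax{u+k}$ between the ones produced by the shift construction --- equivalently, that the stratification by substantial polynomials is not only exhaustive but also compatible in order. This is precisely where the substantial-polynomial machinery of this paper (and not merely the cruder low-defect polynomial theory sufficient for Theorem~\ref{cj8weak-intro}) is indispensable, since the required density statement is essentially the mod-$3$-refined analogue of the naive-upper-bound content of Theorem~\ref{cj2thm}.
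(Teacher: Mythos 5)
Your skeleton---realize points of $\clDax{u}$ via substantial polynomials, shift degrees and defects by $k$, and rule out extra points---matches the paper's strategy, but there is a concrete error and the genuinely hard steps are left unproved. The error: the proposed shift factor $(x_1+1)\cdots(x_k+1)$ is not substantial and does not have defect $k$. The constant $1$ is not stable, so $x_i+1$ fails part (3) of Proposition~\ref{substrecur}; moreover $\cpx{x_i+1}=2$ (by Proposition~\ref{ineq} it cannot be $1$), so each such factor raises the defect of the pair by $2$, not $1$, and the claim $\dft_g=\dft_f+k$ fails. The correct operation is not multiplication by fresh factors but $k$-fold iteration of rule (3) of Definition~\ref{polydef}, $g\mapsto g\otimes x+1$: this preserves the leading coefficient, adds exactly $1$ to the degree and to the base complexity each time, and preserves substantiality --- this is exactly Proposition~\ref{exists}, giving $(((qx_1+1)x_2+1)\cdots)x_k+1$.

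The real gap is that the two steps you flag as needed --- that every element of $\clDax{u}$ is accounted for by a substantial polynomial of the right class, and that no extra points of $\clDax{u+k}$ intervene --- are precisely where all the work lies, and no argument is offered for either. The paper establishes the first by showing that any $\eta\in\clDa{u}$ must equal $\dft(f,C)$ for some pair in a good covering of $\overline{B}_\eta$, hence $\eta=\dft_\st(q)+k$ with $u\equiv\cpx{q}+k\pmod 3$ (Proposition~\ref{subclosure}); it establishes the second by bounding $\ord\alpha\le k$ at such a point via Proposition~\ref{cj9prop} (every pair of defect $\eta$ in a good covering has degree at most $k$) in Proposition~\ref{step3}, and then bootstrapping to equality through the small-exceptional-set result (Proposition~\ref{usu}) in Theorem~\ref{precursor}. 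Finally, the transfinite induction on Cantor normal form is unnecessary: once one has the set identity $\clDa{u+1}'=\clDa{u}+1$ (Proposition~\ref{selfsim3}), the index formula for all $\alpha$ follows at once from the fact that the limit points of a closed well-ordered set are exactly its points at indices divisible by $\omega$ (Proposition~\ref{limitpoints}), and the case $k>1$ is a $k$-fold iteration of $k=1$.
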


Note we have rephrased this conjectures somewhat from its original language; see
the Appendix of \cite{paperwo}, as well as \cite{compactum}, for more on
translating between this language and the original, and see
Theorem~\ref{cj8orig} later in the paper for a form that is closer to the
original.

Also, just as one can rewrite Theorem~\ref{cj8weak-intro} as
Theorem~\ref{selfsim-intro}, one may rewrite Theorem~\ref{cj8} similarly;
see Corollary~\ref{selfsim3}.

See Appendix~\ref{addchains} for a conjectured analogue of this theorem for
addition chains.

\subsubsection{Defects of expressions}
\label{dftexp}

Theorem~\ref{selfsim-intro} has an important consequence, which we will prove
later:
\begin{thm}[Defect closure theorem]
\label{closure-intro}
\[\clDx = \D + \Nn \]
\end{thm}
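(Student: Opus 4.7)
The plan is to deduce Theorem~\ref{closure-intro} directly from the self-similarity identity $\clDx' = \clDx + 1$ (Theorem~\ref{selfsim-intro}), proving the two inclusions separately. Once that theorem is in hand, I do not expect either inclusion to present a serious obstacle; essentially all of the substantive work has already been done.

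For the inclusion $\D + \Nn \subseteq \clDx$: since $\clDx$ is closed one has $\clDx' \subseteq \clDx$, and combining with Theorem~\ref{selfsim-intro} gives $\clDx + 1 \subseteq \clDx$. Iterating yields $\clDx + k \subseteq \clDx$ for every $k \in \Nn$, so in particular $\D + \Nn \subseteq \clDx + \Nn \subseteq \clDx$.

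For the reverse inclusion, let $\beta \in \clDx$. Because $\clDx \subseteq [0,\infty)$, the set $\{k \in \Nn : \beta - k \in \clDx\}$ contains $0$ and is bounded above by $\beta$, so it has a maximum $k_0$. Set $\delta := \beta - k_0 \in \clDx$. The key claim is that $\delta \in \D$: if not, then using $\clDx = \D \cup \D'$ one has $\delta \in \D' = \clDx'$ (the equality $\D' = \clDx'$ was noted in the excerpt), and Theorem~\ref{selfsim-intro} then produces $\delta - 1 \in \clDx$, contradicting the maximality of $k_0$. Hence $\delta \in \D$ and $\beta = \delta + k_0 \in \D + \Nn$. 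The only real thing to verify along the way is the identification $\D' = \clDx'$, which ensures a point of $\clDx \setminus \D$ genuinely qualifies as a limit point so that Theorem~\ref{selfsim-intro} applies; everything else is bookkeeping.
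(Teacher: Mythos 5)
Your argument is correct, but it takes a different route from the one the paper actually uses. The paper derives Theorem~\ref{closure-intro} as a weakening of the stronger statement $\clDx=\clD=\Dst+\Nn$ (Theorem~\ref{closure}): the inclusion $\clD\subseteq\Dst+\Nn$ is proved directly via good coverings (Proposition~\ref{subclosure}), the reverse inclusion via substantial polynomials (Theorem~\ref{precursor}), and then $\D+\Nn$ is squeezed between $\Dst+\Nn$ and itself using $\Dst\subseteq\D\subseteq\Dst+\Nn$. You instead deduce the statement purely formally from the self-similarity identity $\clDx'=\clDx+1$: the forward inclusion by iterating $\clDx+1=\clDx'\subseteq\clDx$, and the reverse by taking the largest $k_0$ with $\beta-k_0\in\clDx$ and noting that $\beta-k_0\notin\D$ would force $\beta-k_0\in\D'=\clDx'=\clDx+1$ and hence $\beta-k_0-1\in\clDx$, contradicting maximality. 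Both steps are sound (the facts $\overline{S}=S\cup S'$ and $S'=\overline{S}'$ are standard for subsets of $\R$, and the latter is noted explicitly in the paper), and there is no circularity, since the paper's proof of Theorem~\ref{selfsim-intro} does not invoke Theorem~\ref{closure-intro}. Your derivation is essentially the one the introduction gestures at when it calls Theorem~\ref{closure-intro} ``a consequence of Theorem~\ref{cj8weak-intro}''; it is more elementary given the self-similarity theorem, but it yields only the weaker statement about $\D$, whereas the paper's route simultaneously produces the sharper identity $\clDx=\Dst+\Nn$ and the interpretation in terms of stable defects.
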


This result has an interesting interpretation.  Consider the following
definitions:
\begin{defns}
\label{defnexpr}
Let $E$ be an expression in $(1,+,\cdot)$.  Define $\val(E)$ to mean the
value of $E$, and $\cpx{E}$ to mean the number of $1$'s in $E$.  Then define the
defect of $E$, $\dft(E)$, by $\dft(E)=\cpx{E}-3\log_3 \val(E)$.
\end{defns}

Then if $E$ is any $(1,+,\cdot)$-expression, $\dft(E)=\dft(\val(E))+k$
for some nonnegative integer $k$.  Conversely, if we consider any defect
$\dft(n)$ and any nonnegative integer $k$, then $\dft(n)+k=\dft(E)$ for some
not-necessarily-minimal expression $E$ with $\val(E)=n$.

Thus we may rephrase Theorem~\ref{closure-intro}:

\begin{thm}[Expression defect theorem]
\label{dftexprthm-intro}
\[ \clDx = \{ \dft(E): E\textrm{ a $(1,+,\cdot)$-expression}\} \]
\end{thm}

That is, the closure of the set of all defects of numbers is equal to the set of
all defects of expressions.

There is also an analogue of this result when we restrict to particular
congruence classes modulo $3$, which appears later as
Corollary~\ref{dftexprsplit}.

\subsubsection{Stability and stable defects}
\label{stableintro}

Consider the following definition and theorem, which was also
\cite[Conjecture~1]{Arias}:

\begin{defn}
A number $n$ is called \emph{stable} if $\cpx{3^k n}=3k+\cpx{n}$ holds for every
$k \ge 0$.
\end{defn}

\begin{thm}[Stability theorem; {\cite[Theorem~13]{paper1}}]
\label{stab}
For any natural number $n$, there exists $K\ge 0$ such that $n3^K$ is stable;
i.e., for all $k\ge K$, one has
\[ \cpx{n3^k} = \cpx{n3^K} + 3(k-K). \]
\end{thm}

One might phrase this as, when $k$ gets large, eventually increasing $k$ by $1$
will always increase the complexity by $3$.  It is a stability theorem for
numbers of the form $a3^k$.

With this, we can define \emph{stable defects}, which allow us to give a
stronger statement of Theorem~\ref{closure-intro}.

\begin{defn}
Define a stable defect to be a defect of the form $\dft(n)$ for stable $n$.
We then define $\Dst$, the set of stable defects, to be the set of $\dft(n)$ for
all stable natural numbers $n$.
\end{defn}

See Section~\ref{dftsubsec} for other characterizations of stable defects.

Many of the theorems stated above are more naturally stated in terms of $\clD$
rather than $\clDx$, and will be proved in that form.  However, these sets are
the same, yielding a stronger version of Theorem~\ref{closure-intro}:

\begin{thm}[Stable defect closure theorem]
\label{closure}
We have:
\begin{enumerate}
\item $\clDx = \clD$
\item $\clD = \Dst + \Nn$
\item $\Dst + \Nn = \D + \Nn$
\end{enumerate}
\end{thm}

We will prove this in Section~\ref{corsec}.  Note that since $\Dst\subseteq\D$
and (by an application of Theorem~\ref{stab}) $\D \subseteq \Dst + \Nn$, the
hard part of this is proving part (2).

While we have thus far discussed results in terms of defects $\D$ and
$\clDx$, the rest of this paper will be primarily written in
terms of stable defects $\Dst$ and $\clD$.

\subsection{The method: Substantial low-defect polynomials}
\label{substintro}

In order to prove Theorem~\ref{selfsim-intro}, we introduce a
notion we call substantial low-defect polynomials.

A low-defect polynomial is a particular type of multilinear polynomial,
introduced in \cite{paperwo} and expanded upon in \cite{theory}, used for
studying numbers with defect below a given bound; see Section~\ref{dftsec} for
details.  In \cite{paperwo} it was proved that, given any positive real number
$s$, one can write down a finite set of low-defect polynomials $\sS$ such that
every number $n$ with $\dft(n)\le s$ can be written in the form
$f(3^{n_1},\ldots,3^{n_d})3^{n_{d+1}}$ for some $f\in\sS$; and that, moreover,
such an $n$ can always be represented ``efficiently'' in such a fashion.
Moreover, it was shown in \cite{theory} that one can choose $\sS$ such that for
any $f\in\sS$, one has $\deg f\le s$.  (Note that the degree of a low-defect
polynomial is always equal to the number of variables it uses, since low-defect
polynomials are multilinear and always include a term containing all the
variables.)

The defects arising from a low-defect polynomial $f$ are bounded above by a
quantity we denote $\dft(f)$ (see Definition~\ref{deltaf}).  In \cite{theory} it
was shown that this quantity satisifes the inequality
\begin{equation}
\label{dftineq}
\dft(f) \ge \dft(a) + \deg f \ge \dft_{\st}(a) + \deg f,
\end{equation}
where $a$ is the leading coefficient of $f$.  Here $\dft_{\st}$ represents the
``stable defect'', which is defined in Definition~\ref{stabdef}.

\begin{defn}
We define a \emph{substantial low-defect polynomial}
to be a polynomial that saturates \eqref{dftineq}, one where
\[ \dft(f) = \dft_{\st}(a) + \deg f. \]
\end{defn}

We call such polynomials ``substantial'' because, as we will show in
Proposition~\ref{cj9prop}, one has that $f$ is substantial if and only if the
degree of $f$ is maximal among all low-defect polynomials $g$ with
$\dft(g)=\dft(f)$.  Lower-degree polynomials will have their contributions to
the clustering of defects below $\dft(f)$ absorbed and overshadowed by those of
larger degree, rendering them ``insubstantial''.

However, even a substantial polynomial $f$ will only affect the clustering of
defects below $\dft(f)$ if its defects do indeed approach $\dft(f)$, rather than
capping out at some smaller $\dft(f)-k$.  We will show though as
Proposition~\ref{usu} that the former case always occurs; if $f$ is substantial,
then numbers coming from it will ``usually'' have the expected complexity and
defect, and the set of exceptions is ``small'' in an appropriate sense.

Note that we only prove that the set of exceptions is small in a fairly weak
sense; however, we will prove in a future paper \cite{hyperplanes} that the
exceptional set in fact small in a stronger sense.

\subsection{Applications to earlier conjectures and variants}

We will apply the main theorem of this paper to a series of conjectures by the
second author (as well as some variants). \cite{Arias}  Actually, although here
we treat our first application, Theorem~\ref{cj2thm}, as an application of
Theorem~\ref{selfsim-intro}, it is actually possible to go the other way and use
Theorem~\ref{cj2thm} to prove Theorem~\ref{selfsim-intro}; there is an
equivalence between the two.  (Strictly speaking, this requires ignoring the
computability requirement.)  However, we will not demonstrate this equivalence
in this paper.

\subsubsection{Application: Stability of $a(b3^k+1)$}
\label{cj2-intro}

We discussed in Section~\ref{stableintro} the phenomenon of stabilization for
numbers of the form $a3^k$.  We will apply the main theorem of this paper to
prove that a similar phenomenon occurs for numbers of the form $b(a3^k+1)$:

\begin{thm}[Degree-$1$ stability theorem]
\label{cj2thm}
We have:
\begin{enumerate}
\item Suppose $a$ is stable.  Then there exists $K$ such that, for all $k\ge K$
and all $\ell\ge 0$,
\[\cpx{(a3^k+1)3^\ell}=\cpx{a}+3k+3\ell+1 .\]
\item Suppose $ab$ is stable and $\cpx{ab}=\cpx{a}+\cpx{b}$.  Then there exists
$K$ such that, for all $k\ge K$ and all $\ell\ge 0$,
\[\cpx{b(a3^k+1)3^\ell}=\cpx{a}+\cpx{b}+3k+3\ell+1 .\]
\end{enumerate}
Moreover, in both these cases, it is possible to algorithmically compute how
large $K$ needs to be.
\end{thm}

This theorem essentially resolves a conjecture of the second author
\cite[Conjecture~2]{Arias}, who suggested that given any two natural numbers $a$
and $b$, there exists $K$ such that for all $k\ge K$,
\begin{equation}
\label{cj2eqn}
\cpx{b(a3^k+1)} = \cpx{a} + \cpx{b} + 3k + 1.
\end{equation}

This conjecture is too strong as stated, as there are cases where there turn out
to be simpler ways of writing the numbers in question.  For instance, consider
the case of $b=2$, $a=1094$.  Since
$\cpx{1094}=22$, if \cite[Conjecture~2]{Arias} were true as stated, then for $k$
sufficiently large, one would have
\[ \cpx{2(1094\cdot3^k+1)}=25 + 3k.\]  However,
it turns out that $\cpx{2\cdot 1094}=\cpx{2188}=22$ as well (since $2188 = 3^7
+1$), which means that, for any $k\ge 0$,
\[ \cpx{2(1094\cdot3^k+1)} = \cpx{2188\cdot3^k + 2} \le 24 + 3k. \]

However, Theorem~\ref{cj2thm} shows that Equation~\eqref{cj2eqn} is true if we
require the additional hypothesis that $\cpx{ab}=\cpx{a}+\cpx{b}$, i.e., that
there is no more efficient way of writing $ab$ than factoring it into $a$ and
$b$, and additionally require that $ab$ is stable.  The obvious modification for
the case $b=1$ is also true.

\subsubsection{The off-by-one case}
\label{secdragon}

We can also get an analogue of Theorem~\ref{cj2thm} that holds even if the
complexities are off by one, if the polynomial considered is just barely
insubstantial (see Section~\ref{secsubst}).

\begin{thm}[Off-by-one stability theorem]
\label{dragons}
We have:
\begin{enumerate}
\item Suppose $ab$ is stable, $\cpx{a}+\cpx{b}=\cpx{ab}+1$, and $b>1$.
Suppose further that $a$ is stable.  Then there exists $K$ such that for all
$k\ge K$,
\[\cpx{b(a3^k+1)}=\cpx{a}+\cpx{b}+3k+1 .\]
\item Supose further that $b$ is also stable.  Then there exists $K$ such that
for all $k\ge K$ and $\ell\ge 0$,
\[\cpx{b(a3^k+1)3^\ell}=\cpx{a}+\cpx{b}+3k+3\ell+1 .\]
\end{enumerate}
Moreover, in both these cases, it is possible to algorithmically compute how
large $K$ needs to be.
\end{thm}

Note that one cannot extend this theorem to cases that are off by $2$, as
demonstrated by the case of $2(1094\cdot3^k+1)$ considered above.  So, there is
a certain irregularity to this case.  We will address the case of more general
degree-$1$ low-defect polynomials in a future paper \cite{deg1}.

\subsubsection{Applications to earlier conjectures}
\label{sectable}
The paper
\cite{Arias} of the second author made
eleven conjectures about integer complexity.  Some of these conjectures
(Conjecture~1 and Conjectures~3--7) have since then been proven or had salvages
proven \cite{paper1, paperwo}.

This leaves Conjecture~2, Conjecture~8, and Conjectures~9--11.  We discussed
Conjecture~2 in Section~\ref{cj2-intro}; a salvaged version appears here as
Theorem~\ref{cj2thm}, which we will prove in Section~\ref{deg1}.
Theorem~\ref{dragons} is also related.  There are other ways to modify
\cite[Conjecture~2]{Arias} by adding additional hypotheses; we hope to prove
other variants in a future paper \cite{deg1}.

Conjecture~8, as previously mentioned, is a rephrasing of Theorem~\ref{cj8} that
we will prove in Section~\ref{secproof}; we will also discuss Conjecture~8
further in Section~\ref{cj9sec}.

Finally, as for Conjectures~9--11, those will not be discussed in detail in this
introduction, but salvages of them are proved in Section~\ref{cj9sec}.  Thus we
put to rest all remaining conjectures from \cite{Arias}.

\subsection{Structure of the paper}

Section~\ref{dftsec} reviews preliminaries from previous papers on the
integer complexity defect.  Also included are general preliminaries from
topology and order theory.  Section~\ref{secsubst} introduces substantial
polynomials and explains how they
work, and then Section~\ref{secproof} uses them to prove Theorem~\ref{cj8} and
related statements, including a weak version of Theorem~\ref{cj2thm}, and
\cite[Conjectures~9--11]{Arias}.  Finally
Section~\ref{deg1} focuses on the degree $1$ case; it proves
Theorem~\ref{cj2thm}, Theorem~\ref{dragons}, and corollaries of these.

\section{Preliminaries}
\label{dftsec}

In this section we will review existing results on defects and low-defect
polynomials, as well some other preliminary results we will need.

\subsection{The defect, stable defect, and stable complexity}
\label{dftsubsec}

We start with some basic facts about the defect:

\begin{prop}[{\cite[Theorem~2.1]{theory}}]
\label{oldprops}
We have:
\begin{enumerate}
\item For all $n$, $\dft(n)\ge 0$.
\item For $k\ge 0$, $\dft(3^k n)\le \dft(n)$, with equality if and only if
$\cpx{3^k n}=3k+\cpx{n}$.  The difference $\dft(n)-\dft(3^k n)$ is a nonnegative
integer.
\item A number $n$ is stable if and only if for any $k\ge 0$, $\dft(3^k
n)=\dft(n)$.
\item If the difference $\dft(n)-\dft(m)$ is rational, then $n=m3^k$ for some
integer $k$ (and so $\dft(n)-\dft(m)\in\mathbb{Z}$).
\item Given any $n$, there exists $k$ such that $3^k n$ is stable.
\item For a given defect $\alpha$, the set $\{m: \dft(m)=\alpha \}$ has either
the form $\{n3^k : 0\le k\le L\}$ for some $n$ and $L$, or the form $\{n3^k :
0\le k\}$ for some $n$.  The latter occurs if and only if $\alpha$ is the
smallest defect among $\dft(3^k n)$ for $k\in \mathbb{Z}$.
\item If $\dft(n)=\dft(m)$, then $\cpx{n}=\cpx{m} \pmod{3}$.
\item $\dft(1)=1$, and for $k\ge 1$, $\dft(3^k)=0$.  No other integers occur as
$\dft(n)$ for any $n$.
\item If $\dft(n)=\dft(m)$ and $n$ is stable, then so is $m$.
\end{enumerate}
\end{prop}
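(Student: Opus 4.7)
The plan is to prove the nine items in sequence, since many of them build on earlier items, and to highlight that (9) is the only one requiring a nontrivial case split.

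Items (1)--(3) are essentially unpacking of definitions together with the trivial upper bound $\cpx{3^k n}\le 3k+\cpx{n}$. First, (1) is Selfridge's inequality $\cpx{n}\ge 3\log_3 n$. For (2), writing $\dft(3^k n)-\dft(n)=\cpx{3^k n}-\cpx{n}-3k$ shows both the inequality and that the difference lies in $\Z_{\ge 0}$. Then (3) is immediate from (2) and the definition of stability. I would also record here the reformulation $\dft(m)-\dft(n)=\cpx{m}-\cpx{n}-3\log_3(m/n)$, which will be the workhorse for several later items.

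For (4), irrationality of $\log 3/\log p$ for any prime $p\ne 3$ gives that $\log_3(n/m)$ is rational only when $n/m$ is a power of $3$; combined with the displayed formula this forces $n=m3^k$ and makes the difference an integer. For (5), the sequence $\dft(3^k n)$ is nonincreasing in $k$ by (2), bounded below by $0$ by (1), and its successive differences are nonnegative integers, so it is eventually constant; once constant, (3) says the corresponding $3^k n$ is stable. Item (7) is the formula $\cpx{m}-\cpx{n}=3k$ read off the expression for $\dft(m)-\dft(n)$ once (4) gives $m=n3^k$. Item (8) combines $\cpx{1}=1$ and $\cpx{3^k}=3k$ with (4) applied to $m=1$: any integer defect is rational, so $n$ must be a power of $3$, and a direct check leaves only $n=1$ and $n=3^k$ with $k\ge 1$.

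For (6), the set in question is contained in a single orbit $\{n3^k:k\in\Z\}$ by (4), and by (2) the function $k\mapsto \dft(n3^k)$ is nonincreasing in $k$ (on the domain where it is defined, i.e., $n3^k\in\N$); since the differences are integers, the preimage of any fixed value $\alpha$ is an interval in $\Z$, and it extends to $+\infty$ precisely when $\alpha$ is the minimum of $\dft$ on that orbit, which is the stable case.

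The main obstacle is (9), because if $\dft(n)=\dft(m)$ then (4) gives $m=n3^k$ only for some $k\in\Z$, possibly negative. I would split into cases. If $k\ge 0$, then $\{3^j m\}_{j\ge 0}=\{3^{k+j}n\}_{j\ge 0}$ is a tail of the defect sequence of $n$, which is constant by (3), so $m$ is stable. If $k<0$, set $t=-k>0$, so $n=3^t m$. For $j\ge t$, $3^j m=3^{j-t}n$ has defect $\dft(n)=\dft(m)$ by stability of $n$. For $0\le j<t$, apply (2) twice: $\dft(m)\ge\dft(3^j m)\ge\dft(3^t m)=\dft(n)=\dft(m)$, forcing equality throughout. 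Hence $\dft(3^j m)=\dft(m)$ for all $j\ge 0$, and (3) yields that $m$ is stable. The delicate point in this last step is that one must use (2) in both directions on the same orbit, which is precisely what the integrality of the differences in (2) allows.
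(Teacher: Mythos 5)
Your proof is correct. Note that the paper itself offers no argument for this proposition --- it is quoted wholesale from \cite[Theorem~2.1]{theory} --- so there is no in-paper proof to compare against; but your derivation is the natural self-contained one: everything reduces to the identity $\dft(m)-\dft(n)=\cpx{m}-\cpx{n}-3\log_3(m/n)$, the subadditivity bound $\cpx{3^kn}\le 3k+\cpx{n}$, and unique factorization (for item (4)). The two-sided squeeze in the $k<0$ case of item (9), using integrality and monotonicity from (2) along the single orbit, is the one genuinely non-mechanical step, and you have it right.
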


We will want to consider the set of all defects:

\begin{defn}
We define the \emph{defect set} $\mathscr{D}$ to be $\{\dft(n):n\in\N\}$, the
set of all defects.
In addition, for $u$ a congruence class modulo $3$, we
define \[\mathscr{D}^u = \{\dft(n):n>1,\ \cpx{n}\equiv u\pmod{3}\}.\]
\end{defn}

\begin{prop}
\label{disjbasic}
For distinct congruence classes $u$ modulo $3$, the sets $\mathscr{D}^u$ are
disjoint.
\end{prop}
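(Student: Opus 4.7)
The plan is to reduce the statement directly to part~(7) of Proposition~\ref{oldprops}, which already asserts that $\dft(n)=\dft(m)$ forces $\cpx{n}\equiv\cpx{m}\pmod 3$. Concretely, suppose $\alpha\in\mathscr{D}^u\cap\mathscr{D}^{u'}$. Unwinding the definition, there exist integers $n,m>1$ with $\dft(n)=\dft(m)=\alpha$ and with $\cpx{n}\equiv u\pmod 3$, $\cpx{m}\equiv u'\pmod 3$. Applying Proposition~\ref{oldprops}(7) to $n$ and $m$ yields $\cpx{n}\equiv\cpx{m}\pmod 3$, hence $u=u'$. This is the entire content of the argument, and no further machinery is needed.

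The only point that warrants a brief sanity check is the role of the restriction $n>1$ in the definition of $\mathscr{D}^u$, since this restriction appears in $\mathscr{D}^u$ but not in $\mathscr{D}$. The restriction is harmless here: it only removes the single value $\dft(1)=1$ (by Proposition~\ref{oldprops}(8), no other $n$ has this defect), and since $\cpx{1}=1$, this value would in any case only have been a candidate for membership in $\mathscr{D}^1$. So removing $n=1$ cannot create overlap between the sets, and the argument above already handles all remaining values of $\alpha$.

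There is no real obstacle; the proposition is essentially a bookkeeping consequence of Proposition~\ref{oldprops}(7), and the purpose of stating it separately is presumably to have a clean reference for the disjointness when later decomposing $\mathscr{D}$ and $\clDx$ according to the congruence class of $\cpx{n}$ modulo $3$.
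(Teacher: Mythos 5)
Your proof is correct and is exactly the paper's argument: the paper's proof of Proposition~\ref{disjbasic} consists solely of citing part~(7) of Proposition~\ref{oldprops}. Your additional remark about the $n>1$ restriction is a harmless (and correct) extra sanity check not present in the paper.
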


\begin{proof}
This follows from part (7) of Proposition~\ref{oldprops}.
\end{proof}

The paper \cite{paperwo} also defined the notion of a \emph{stable defect}:

\begin{defn}
\label{stabdef}
We define a \emph{stable defect} to be the defect of a stable number, and define
$\mathscr{D}_\st$ to be the set of all stable defects.  Also, for $a$ a
congruence class modulo $3$, we define $\mathscr{D}^u_\st=\mathscr{D}^u \cap
\mathscr{D}_\st$.
\end{defn}

Because of part (9) of Theorem~\ref{oldprops}, this definition makes sense; a
stable defect $\alpha$ is not just one that is the defect of some stable number,
but one for which any $n$ with $\dft(n)=\alpha$ is stable.  Stable defects can
also be characterized by the following proposition from \cite{paperwo}:

\begin{prop}[{\cite[Proposition~2.4]{theory}}]
\label{modz1}
A defect $\alpha$ is stable if and only if it is the smallest
$\beta\in\mathscr{D}$ such that $\beta\equiv\alpha\pmod{1}$.
\end{prop}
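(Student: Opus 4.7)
The plan is to use parts (2), (3), (4), and (9) of Proposition~\ref{oldprops} to reduce the claim to a short case analysis. The key point is that part (4) tells us that any two defects lying in the same residue class modulo $1$ must actually be realized by integers that differ by a power of $3$, so statements about the residue class mod $1$ can be translated into statements about $\delta(3^k n)$ versus $\delta(n)$, where part~(2) gives an inequality with an explicit equality condition.

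\emph{Forward direction.} Assume $\alpha$ is stable, fix a stable $m$ realizing $\alpha$ (such an $m$ exists by definition, and by part~(9) any such $m$ is stable), and let $\beta = \delta(n) \in \mathscr{D}$ with $\beta \equiv \alpha \pmod{1}$. Since $\beta - \alpha$ is an integer, and in particular rational, part~(4) yields $n = m\cdot 3^k$ for some $k \in \mathbb{Z}$. If $k \ge 0$, stability of $m$ combined with part~(3) gives $\delta(3^k m) = \delta(m)$, so $\beta = \alpha$. If $k < 0$, write $m = n \cdot 3^{-k}$ with $-k > 0$ and invoke part~(2) to conclude $\alpha = \delta(m) \le \delta(n) = \beta$. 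Either way, $\alpha \le \beta$, so $\alpha$ is the minimum of $\mathscr{D}$ in its residue class.

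\emph{Reverse direction.} Conversely, suppose $\alpha$ is minimal among elements of $\mathscr{D}$ congruent to $\alpha$ modulo $1$. Pick any $n$ with $\delta(n) = \alpha$; we must show $n$ is stable. For any $k \ge 0$, part~(2) gives $\delta(3^k n) \le \delta(n) = \alpha$ with $\delta(n) - \delta(3^k n) \in \mathbb{Z}_{\ge 0}$. Hence $\delta(3^k n) \in \mathscr{D}$ lies in the same residue class as $\alpha$ modulo $1$, and by minimality $\delta(3^k n) \ge \alpha$. Combining the two inequalities gives $\delta(3^k n) = \delta(n)$, and the equality clause of part~(2) then says $\cpx{3^k n} = 3k + \cpx{n}$. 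As this holds for every $k \ge 0$, $n$ is stable, so $\alpha$ is a stable defect.

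There is essentially no obstacle here: the entire argument is a bookkeeping exercise combining the listed properties of the defect. The only step requiring a moment's care is the forward direction when $k < 0$, where one must realize that ``$n$ realizes $\beta$'' forces $m$ to be a power-of-$3$ \emph{divisor} multiple of $n$ in the right direction so that part~(2) can be applied to conclude $\alpha \le \beta$ rather than the reverse.
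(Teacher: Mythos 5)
Your proof is correct. Note that the paper does not actually prove this proposition itself -- it is quoted from \cite{theory} (Proposition~2.4 there) -- so there is no in-paper argument to compare against; your derivation from parts (2), (3), (4), and (9) of Proposition~\ref{oldprops} is exactly the kind of short bookkeeping argument one would expect, and both directions, including the $k<0$ case handled via part~(2) applied to $m=n\cdot 3^{-k}$, check out.
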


We can also define the \emph{stable defect} of a given number, which we denote
$\dft_\st(n)$.

\begin{defn}
For a positive integer $n$, define the \emph{stable defect} of $n$, denoted
$\dft_\st(n)$, to be $\dft(3^k n)$ for any $k$ such that $3^k n$ is stable.
(This is well-defined as if $3^k n$ and $3^\ell n$ are stable, then $k\ge \ell$
implies $\dft(3^k n)=\dft(3^\ell n)$, and $\ell\ge k$ implies this as well.)
\end{defn}

Note that the statement ``$\alpha$ is a stable defect'', which earlier we were
thinking of as ``$\alpha=\dft(n)$ for some stable $n$'', can also be read as the
equivalent statement ``$\alpha=\dft_\st(n)$ for some $n$''.

Similarly we have the stable complexity:
\begin{defn}
For a positive integer $n$, define the \emph{stable complexity} of $n$, denoted
$\cpx{n}_\st$, to be $\cpx{3^k n}-3k$ for any $k$ such that $3^k n$ is stable.
\end{defn}

We then have the following facts relating the notions of $\cpx{n}$, $\dft(n)$,
$\cpx{n}_\st$, and $\dft_\st(n)$:

\begin{prop}
\label{stoldprops}
We have:
\begin{enumerate}
\item $\dft_\st(n)= \min_{k\ge 0} \dft(3^k n)$.
\item $\dft_\st(n)$ is the smallest $\alpha\in\mathscr{D}$ such that
$\alpha\equiv \dft(n) \pmod{1}$.  In particular, if two stable defects are
congruent modulo $1$, then they are equal.
\item $\cpx{n}_\st = \min_{k\ge 0} (\cpx{3^k n}-3k)$.
\item $\dft_\st(n)=\cpx{n}_\st-3\log_3 n$.
\item $\dft_\st(n) \le \dft(n)$, with equality if and only if $n$ is stable.
\item $\cpx{n}_\st \le \cpx{n}$, with equality if and only if $n$ is stable.
\item $\cpx{3n}_\st = \cpx{n}_\st+3$.
\item If $\dft_\st(n)=\dft_\st(m)$, then $\cpx{n}_\st\equiv\cpx{m}_\st\pmod{3}$.
\item $\cpx{nm}_\st \le \cpx{n}_\st + \cpx{m}_\st$.
\end{enumerate}
\end{prop}

\begin{proof}
Parts (1)-(8) are Proposition~2.7 from \cite{intdft}.  Part (9) is Proposition~9
from Section~7 of \cite{paperalg}.
\end{proof}

Remember that $1$ is not stable, so one has $\cpx{1}=1$ and $\cpx{1}_\st=0$.

Note, by the way, that just as $\mathscr{D}_\st$ can be characterized either as
defects $\dft(n)$ with $n$ stable or as defects $\dft_\st(n)$ for any $n$,
$\mathscr{D}^u_\st$ can be characterized either as defects $\dft(n)$ with $n$
stable and $\cpx{n}\equiv u\pmod{3}$, or as defects $\dft_\st(n)$ for any $n$
with $\cpx{n}_\st\equiv u\pmod{3}$.

We also make the following definition:
\begin{defn}
For $n\in N$, define $\Delta(n) = \dft(n)-\dft_\st(n) = \cpx{n}-\cpx{n}_\st$.
\end{defn}

By the above, one always has $\Delta(n)\in\Nn$.

Also, in order to further discuss stabilization, it is useful here to define:
\begin{defn}
\label{defk}
Given $n\in \N$, define $K(n)$ to be the smallest $k$ such that $n3^k$ is
stable.
\end{defn}

Then it was shown in \cite{paperalg} that:
\begin{thm}
\label{computk}
The function $K$ is computable; the function $n\mapsto\cpx{n}_\st$ is
computable; and the set of stable numbers is a computable set.
\end{thm}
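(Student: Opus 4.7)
The plan is to reduce all three claims to the computability of $K(n)$, since then $\cpx{n}_\st = \cpx{n 3^{K(n)}} - 3 K(n)$, and $n$ is stable if and only if $K(n) = 0$. The complexity $\cpx{m}$ of any individual natural number is already computable by the dynamic programming recursion
\[
\cpx{m} = \min_{\substack{0 < a,b < m \\ m = a+b \text{ or } m = ab}} \cpx{a} + \cpx{b},
\]
so we may evaluate $\cpx{n 3^k}$, and hence $\dft(n 3^k)$, for any particular $k$.

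By Proposition~\ref{oldprops}(2) and (5), the sequence $(\dft(n 3^k))_{k \ge 0}$ is non-increasing with differences in $\Nn$ and is eventually constant at $\dft_\st(n) = \dft(n 3^{K(n)})$. Thus it suffices to produce an a priori, effectively computable upper bound $K_{\max}(n) \ge K(n)$: given such a bound, one computes $\dft(n 3^k)$ for each $0 \le k \le K_{\max}(n)$ and returns the least $k$ attaining the minimum value.

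To produce $K_{\max}(n)$, I would invoke the structure theorem of \cite{paperwo} summarized in Section~\ref{substintro}: for $s = \dft(n)$ there is a finite, effectively computable set $\sS_s$ of low-defect polynomials such that every $m$ with $\dft(m) \le s$ takes the form $m = f(3^{a_1}, \ldots, 3^{a_r}) 3^{a_{r+1}}$ for some $f \in \sS_s$ and $a_1, \ldots, a_{r+1} \in \Nn$. In particular the stable number $n 3^{K(n)}$, whose defect $\dft_\st(n)$ is at most $\dft(n) = s$, is of this form. For each fixed $f \in \sS_s$, the equation $f(3^{a_1}, \ldots, 3^{a_r}) 3^{a_{r+1}} = n 3^K$ has only finitely many solutions in $(a_1, \ldots, a_{r+1}, K)$: since $f$ is multilinear with positive integer coefficients, its value grows in the $a_i$ and the constraint that it divide $n \cdot 3^{\Z}$ bounds each $a_i$, while $a_{r+1}$ is then pinned down by the $3$-adic valuation of $f(3^{a_1}, \ldots, 3^{a_r})/n$ together with the target $K$. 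Enumerating the resulting values of $K$ over all $f \in \sS_s$ will yield $K_{\max}(n)$.

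The main obstacle will be this last step: turning the qualitative finiteness into a concrete, effective bound on $K$ computable from $n$ and the description of each $f$. This will require exploiting the specific shape of low-defect polynomials -- in particular the role of the leading coefficient highlighted by inequality~\eqref{dftineq} and the substantial-polynomial discussion of Section~\ref{substintro} -- to extract explicit bounds on each $a_i$. Packaging this effectively, and then verifying that the set $\sS_s$ itself can be produced algorithmically from $s$, is the technical heart of the argument and the part one would have to import from \cite{paperalg} rather than reprove here.
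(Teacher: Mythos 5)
First, a note on sourcing: the paper does not prove Theorem~\ref{computk} at all --- it is quoted verbatim from \cite{paperalg} --- and your proposal likewise ends by deferring ``the technical heart'' to \cite{paperalg}. Since that reference is precisely where the theorem comes from, what you have written is not an independent proof. But the sketch also contains a concrete mathematical error beyond the deferral, which is worth isolating.

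Your reduction is fine: computability of $K$ gives the other two claims, and given any computable bound $K_{\max}(n)\ge K(n)$ one recovers $K(n)$ by minimizing $\dft(n3^k)$ over $0\le k\le K_{\max}(n)$, since that sequence is non-increasing and first attains its minimum exactly at $k=K(n)$. The gap is in producing $K_{\max}(n)$. The finiteness claim is false as stated: if $(a_1,\ldots,a_{r+1},K)$ solves $f(3^{a_1},\ldots,3^{a_r})3^{a_{r+1}}=n3^K$, so does $(a_1,\ldots,a_{r+1}+1,K+1)$, so the solution set is infinite whenever it is nonempty and ``enumerating the resulting values of $K$'' yields an unbounded set with no maximum. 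Worse, \emph{every} $n3^K$ with $K\ge 0$ has defect at most $s=\dft(n)$ and hence is $3$-represented in your form, so membership in that form carries no information whatsoever about $K(n)$. To make an argument of this shape work you must instead pin down the \emph{leader} $m$ of the stable defect class: $m$ is the least number with $\dft(m)=\dft_\st(n)$, Theorem~\ref{covering} represents it efficiently by an \emph{unaugmented} pair $(f,C)\in\sS_s$ (no free trailing factor $3^{a_{r+1}}$ to absorb powers of $3$), its $3$-free part equals that of $n$, and one checks that $K(n)$ is at most the exponent of $3$ in $m$. Even then, bounding the exponents $a_i$ from the constraint that $f(3^{a_1},\ldots,3^{a_r})$ have a prescribed $3$-free part requires uniformly controlling the power of $3$ dividing such values (exploiting the nonzero constant term of low-defect polynomials), and this --- together with the algorithmic construction of $\sS_s$, which in this paper is the separate Theorem~\ref{goodcomput}, also imported from \cite{paperalg} --- is exactly the content left unproved.
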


We will use this later in proving that the bounds in Theorem~\ref{cj2thm} and
its variants in Section~\ref{deg1} can be computed.

There is one more fact about stability that we will use repeatedly.

\begin{prop}[{\cite[Section~7, Corollary~1]{paperalg}}]
We have:
\label{goodfac}
\begin{enumerate}
\item If $N$ is stable, $N=n_1\cdots n_k$, and
$\cpx{N}=\cpx{n_1}+\ldots+\cpx{n_k}$, then the $n_i$ are also stable.
\item If $n_i$ are stable numbers, $N=n_1\cdots n_k$, and
$\cpx{N}_\st=\cpx{n_1}_\st+\ldots+\cpx{n_k}_\st$, then $N$ is also stable.
\end{enumerate}
\end{prop}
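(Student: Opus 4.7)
The plan is to prove each part by comparing the submultiplicative behavior of $\cpx{\cdot}$ against the defining inequality for stability.

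For part (1), I would argue by contradiction. Suppose some $n_i$ is not stable. Then by the definition of stability, there exists $m\ge 0$ such that
\[ \cpx{3^m n_i} < 3m + \cpx{n_i}. \]
Using submultiplicativity of integer complexity together with the factorization $3^m N = 3^m n_i \cdot \prod_{j\ne i} n_j$, I get
\[ \cpx{3^m N} \le \cpx{3^m n_i} + \sum_{j\ne i}\cpx{n_j} < 3m + \sum_j \cpx{n_j} = 3m + \cpx{N}, \]
where the final equality uses the hypothesis $\cpx{N}=\sum_j\cpx{n_j}$. This contradicts stability of $N$, since $N$ stable forces $\cpx{3^m N}=3m+\cpx{N}$.

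For part (2), the plan is even more direct: combine the hypothesis with submultiplicativity and part (6) of Proposition~\ref{stoldprops}. Since each $n_i$ is stable, part (6) gives $\cpx{n_i}_\st = \cpx{n_i}$, so the hypothesis reads $\cpx{N}_\st = \sum_i \cpx{n_i}$. Submultiplicativity of $\cpx{\cdot}$ then yields
\[ \cpx{N} \le \sum_i \cpx{n_i} = \cpx{N}_\st. \]
But part (6) of Proposition~\ref{stoldprops} also says $\cpx{N}_\st \le \cpx{N}$ always, so $\cpx{N}=\cpx{N}_\st$, and the equality case of that same part tells us $N$ is stable.

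Neither step involves a serious obstacle; the whole argument rests on the two symmetric facts that (i) $\cpx{\cdot}$ is submultiplicative across any factorization, and (ii) stability of $n$ is equivalent to $\cpx{n}=\cpx{n}_\st$. The only mildly subtle point is, in part (1), invoking the ``strict inequality'' form of non-stability rather than something about $\dft$; this is immediate from the definition but worth stating carefully so that the strict inequality propagates through the sum.
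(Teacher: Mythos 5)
Your proof is correct. The paper itself gives no proof of this proposition---it is imported by citation from \cite{paperalg}---so there is nothing internal to compare against, but your argument is the natural one and both halves check out: in part (1) the only inequality available in the relevant direction is $\cpx{3^m n}\le 3m+\cpx{n}$, so non-stability does yield the strict inequality you need, and submultiplicativity propagates it to $N$; in part (2) the sandwich $\cpx{N}\le\sum_i\cpx{n_i}=\cpx{N}_\st\le\cpx{N}$ together with the equality case of part (6) of Proposition~\ref{stoldprops} is exactly right. (The degenerate factor $n_i=1$, where $\cpx{1}=1\ne 0=\cpx{1}_\st$, is harmlessly excluded by the hypotheses in both parts.)
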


Finally, let us formally note how defects of expressions relate to defects of
numbers.

\begin{prop}
\label{dftexprprop}
Let $E$ be a $(1,+,\cdot)$-expression.  Then $\dft(E)=\dft(\val(E))+k$ where
$k=\cpx{E}-\cpx{\val(E)}$ is a nonnegative integer.  Conversely, if we consider
any defect $\dft(n)$ and any nonnegative integer $k$, then $\dft(n)+k=\dft(E)$
for some $(1,+,\cdot)$-expression $E$ with $\val(E)=n$ and $\cpx{E}-\cpx{n}=k$.
\end{prop}

\begin{proof}
Given $E$, $\dft(E)=\cpx{E}-3\log_3(\val(E))$.  Since $E$ is an expression for
$\val(E)$, $\cpx{E}\ge \cpx{\val(E)}$; let $k=\cpx{E}-\cpx{\val(E)}$.  Then
$\dft(E)=\dft(\val(E))+k$.

Conversely, given $n$ and $k$, let $E'$ be a minimal $(1,+,\cdot)$-expression
for $n$, so $\val(E')=n$ and $\cpx{E'}=\cpx{n}$.  Then let $E$ be the product of
$E'$ with $k$ additional factors of $1$.  Then $val(E)=n$ and
$\cpx{E}=\cpx{n}+k$, so $\dft(E)=\dft(n)+k$.
\end{proof}

\subsection{Low-defect polynomials and the exceptional set}
\label{secpoly}

We represent the set of numbers with defect at most $r$ by substituting in
powers of $3$ into certain multilinear polynomials we call \emph{low-defect
polynomials}.  We will associate with each one a ``base complexity'' to form a
\emph{low-defect pair}.  These notions can also be formalized in terms of
\emph{low-defect expression} or \emph{low-defect trees}, which we will discuss
shortly.

\begin{defn}
\label{polydef}
We define the set $\mathscr{P}$ of \emph{low-defect pairs} as the smallest
subset of $\Z[x_1,x_2,\ldots]\times \N$ such that:
\begin{enumerate}
\item For any constant polynomial $k\in \N\subseteq\Z[x_1, x_2, \ldots]$ and any
$C\ge \cpx{k}$, we have $(k,C)\in \mathscr{P}$.
\item Given $(f_1,C_1)$ and $(f_2,C_2)$ in $\mathscr{P}$, we have $(f_1\otimes
f_2,C_1+C_2)\in\mathscr{P}$, where, if $f_1$ is in $d_1$ variables and $f_2$ is
in $d_2$ variables,
\[ (f_1\otimes f_2)(x_1,\ldots,x_{d_1+d_2}) :=
	f_1(x_1,\ldots,x_{d_1})f_2(x_{d_1+1},\ldots,x_{d_1+d_2}). \]
\item Given $(f,C)\in\mathscr{P}$, $c\in \N$, and $D\ge \cpx{c}$, we have
$(f\otimes x_1 + c,C+D)\in\mathscr{P}$ where $\otimes$ is as above.
\end{enumerate}

The polynomials obtained this way will be referred to as \emph{low-defect
polynomials}.  If $(f,C)$ is a low-defect pair, $C$ will be called its
\emph{base complexity}.  If $f$ is a low-defect polynomial, we will define its
\emph{absolute base complexity}, denoted $\cpx{f}$, to be the smallest $C$ such
that $(f,C)$ is a low-defect pair.
We will also associate to a low-defect polynomial $f$ the \emph{augmented
low-defect polynomial}
\[ \xpdd{f} = f\otimes x_1; \]
if $f$ is in $d$ variables, this is $fx_{d+1}$.
\end{defn}

So, for instance, $(3x_1+1)x_2+1$ is a low-defect polynomial, as is
$(3x_1+1)(3x_2+1)$, as is $(3x_1+1)(3x_2+1)x_3+1$, as is
\[2((73(3x_1+1)x_2+6)(2x_3+1)x_4+1).\]  In this paper we will primarily concern
ourselves with low-defect pairs $(f,C)$ where $C=\cpx{f}$, so in much of what
follow, we will dispense with the formalism of low-defect pairs and just discuss
low-defect polynomials.

Note that the degree of a low-defect polynomial is also equal to the number of
variables it uses; see Proposition~\ref{polystruct}.
Also note that augmented low-defect polynomials are never themselves low-defect
polynomials; as we will see in a moment (Proposition~\ref{polystruct}),
low-defect polynomials always have nonzero constant term, whereas augmented
low-defect polynomials always have zero constant term.  We can also observe
that low-defect polynomials are in fact read-once polynomials as discussed in
for instance \cite{ROF}.

Note that we do not really care about what variables a low-defect polynomial is
in -- if we permute the variables of a low-defect polynomial or replace them
with others, we will still regard the result as a low-defect polynomial.  From
this perspective, the meaning of $f\otimes g$ could be simply regarded as
``relabel the variables of $f$ and $g$ so that they do not share any, then
multiply $f$ and $g$''.  Helpfully, the $\otimes$ operator is associative not
only with this more abstract way of thinking about it, but also in the concrete
way it was defined above.

From \cite{paperwo}, we have the following propositions about low-defect
polynomials:

\begin{prop}[{\cite[Proposition~4.2]{paperwo}}]
\label{polystruct}
Suppose $f$ is a low-defect polynomial of degree $d$.  Then $f$ is a
polynomial in the variables $x_1,\ldots,x_d$, and it is a multilinear
polynomial, i.e., it has degree $1$ in each of its variables.  The coefficients
are non-negative integers.  The constant term is nonzero, and so is the
coefficient of $x_1\cdots x_d$, which we will call the \emph{leading
coefficient} of $f$.
\end{prop}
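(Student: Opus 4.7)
The plan is to proceed by structural induction on the recursive construction of low-defect pairs given in Definition~\ref{polydef}. I will simultaneously verify all of the claims: that if $f$ has degree $d$ then $f$ is a polynomial in exactly $x_1, \ldots, x_d$; that $f$ is multilinear; that its coefficients are nonnegative integers; that its constant term is nonzero; and that its coefficient of $x_1 \cdots x_d$ is nonzero. Since the set $\mathscr{P}$ is defined as the \emph{smallest} subset closed under the three clauses, it suffices to check that the base clause produces a polynomial with all these properties and that the two constructor clauses preserve them.

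For the base clause, a constant $k \in \N$ has degree $0$, involves no variables, has the single coefficient $k \ge 1$, and the constant term and the coefficient of the empty product coincide with $k$, which is nonzero. For the tensor clause, if $f_1$ and $f_2$ are multilinear in $x_1, \ldots, x_{d_1}$ and $x_{d_1+1}, \ldots, x_{d_1 + d_2}$ respectively, with nonnegative integer coefficients and nonzero constant and leading coefficients, then because their variable sets are disjoint the product $f_1 \otimes f_2$ is multilinear in $x_1, \ldots, x_{d_1+d_2}$; every coefficient is a product of coefficients of $f_1$ and $f_2$ and so is a nonnegative integer; the constant term of the product is the product of the two constant terms (nonzero); and the coefficient of $x_1 \cdots x_{d_1+d_2}$ is the product of the two leading coefficients, again nonzero. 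In particular $\deg(f_1 \otimes f_2) = d_1 + d_2$ equals the number of variables used.

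For the third clause, assume $f$ is multilinear in $x_1, \ldots, x_d$ with the stated properties, and let $c \in \N$. Then $f \otimes x_1 = f(x_1, \ldots, x_d)\, x_{d+1}$ is multilinear in $x_1, \ldots, x_{d+1}$ with nonnegative integer coefficients, its coefficient of $x_1 \cdots x_{d+1}$ equals the leading coefficient of $f$ (nonzero), and its constant term is $0$ since it is divisible by $x_{d+1}$. Adding the positive integer $c$ leaves multilinearity, nonnegativity of coefficients, degree, and leading coefficient unchanged, while making the constant term equal to $c \ne 0$. This completes the induction.

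There is no real obstacle here; the only point to be careful about is that in the tensor clause one needs the two leading coefficients to be nonzero in order to conclude that $\deg(f_1 \otimes f_2) = d_1 + d_2$, so that the degree genuinely equals the number of variables — but this is exactly what the inductive hypothesis supplies, so the induction closes cleanly.
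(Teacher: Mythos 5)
This proposition is quoted from \cite{paperwo} and the present paper gives no proof of it; your structural induction on the clauses of Definition~\ref{polydef} is the standard argument (and the one used in the cited source), and it is correct and complete, including the key observation that disjointness of the variable sets in the $\otimes$ clause makes every coefficient of the product a product of coefficients.
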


\begin{prop}[{\cite[Proposition~2.10]{theory}}]
\label{basicub}
If $f$ is a low-defect polynomial of degree $d$, then
\[\cpx{f(3^{n_1},\ldots,3^{n_d})}\le \cpx{f}+3(n_1+\ldots+n_d).\]
and
\[\cpx{\xpdd{f}(3^{n_1},\ldots,3^{n_{d+1}})}\le \cpx{f}+3(n_1+\ldots+n_{d+1}).\]
\end{prop}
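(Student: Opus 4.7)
The plan is to induct on the recursive definition of low-defect pairs given in Definition~\ref{polydef}. More precisely, I would prove by structural induction on $(f,C)\in\mathscr{P}$ the statement
\[ \cpx{f(3^{n_1},\ldots,3^{n_d})} \le C + 3(n_1+\ldots+n_d), \]
where $d = \deg f$. Once this is established for arbitrary pairs, specializing $C = \cpx{f}$ (which is by definition the minimum such $C$) gives the first inequality of the proposition. The augmented version follows by one extra multiplication: since $\xpdd{f}(3^{n_1},\ldots,3^{n_{d+1}}) = f(3^{n_1},\ldots,3^{n_d})\cdot 3^{n_{d+1}}$, submultiplicativity of $\cpx{\cdot}$ (i.e., $\cpx{ab}\le \cpx{a}+\cpx{b}$, using $\cpx{3^{n_{d+1}}}\le 3n_{d+1}$) upgrades the first bound by an additional $3n_{d+1}$.

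For the induction itself, there are three cases corresponding to the three clauses of Definition~\ref{polydef}. In the base case $f = k$ is a constant in $\N$ and $C \ge \cpx{k}$; since there are no variables, the claim reduces to $\cpx{k}\le C$, which holds by construction. In the tensor case $(f_1\otimes f_2, C_1+C_2)$, write $d_i=\deg f_i$, so that $f(3^{n_1},\ldots,3^{n_{d_1+d_2}})$ factors as a product of $f_1(3^{n_1},\ldots,3^{n_{d_1}})$ and $f_2(3^{n_{d_1+1}},\ldots,3^{n_{d_1+d_2}})$; applying submultiplicativity of $\cpx{\cdot}$ and the two inductive hypotheses gives the bound $C_1 + C_2 + 3\sum n_i$. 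In the third case, $(f\otimes x_1 + c,\,C+D)$ with $D\ge \cpx{c}$, we have
\[ (f\otimes x_1 + c)(3^{n_1},\ldots,3^{n_{d+1}}) = f(3^{n_1},\ldots,3^{n_d})\cdot 3^{n_{d+1}} + c; \]
bounding via subadditivity $\cpx{x+y}\le \cpx{x}+\cpx{y}$, submultiplicativity, the inductive hypothesis on $(f,C)$, and $\cpx{3^{n_{d+1}}}\le 3n_{d+1}$, $\cpx{c}\le D$, we obtain $C + 3(n_1+\cdots+n_d) + 3n_{d+1} + D$, as required.

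There is not really a hard step here; the whole proof is bookkeeping against the definition. The only point that needs a moment's thought is ensuring that the induction is carried out on \emph{pairs} rather than on polynomials, since a given low-defect polynomial can in principle be built in more than one way. Inducting on pairs (with an arbitrary $C$) removes this ambiguity, and the minimality definition of $\cpx{f}$ then lets us read off the stated inequality for the absolute base complexity. The submultiplicativity and subadditivity of $\cpx{\cdot}$, together with $\cpx{3^k}\le 3k$, are the only properties of integer complexity used.
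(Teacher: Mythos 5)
Your structural induction on low-defect pairs is exactly the argument behind the cited Proposition~2.10 of the reference, and it is correct; the paper itself gives no proof, merely importing the result. One small nitpick: the inequality $\cpx{3^{n}}\le 3n$ fails at $n=0$ (since $\cpx{1}=1$), so in the multiplication steps you should instead invoke the bound $\cpx{m\cdot 3^{n}}\le\cpx{m}+3n$, which is what you actually need and which is trivially true when $n=0$.
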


Because of this, it makes sense to define:

\begin{defn}
Given a low-defect pair $(f,C)$ (say of degree $r$) and a number $N$, we will
say that $(f,C)$ \emph{efficiently $3$-represents} $N$ if there exist
nonnegative integers $n_1,\ldots,n_r$ such that
\[N=f(3^{n_1},\ldots,3^{n_r})\ \textrm{and}\ \cpx{N}=C+3(n_1+\ldots+n_r).\]
We will say $(\xpdd{f},C)$ efficiently
$3$-represents $N$ if there exist $n_1,\ldots,n_{r+1}$ such that
\[N=\xpdd{f}(3^{n_1},\ldots,3^{n_{r+1}})\ \textrm{and}\ 
\cpx{N}=C+3(n_1+\ldots+n_{r+1}).\]
More generally, we will also say $f$ $3$-represents $N$ if there exist
nonnegative integers $n_1,\ldots,n_r$ such that $N=f(3^{n_1},\ldots,3^{n_r})$,
and similarly with $\xpdd{f}$.
\end{defn}

Note that if $(f,C)$ (or $(\xpdd{f},C)$) efficiently $3$-represents some $N$,
then $(f,\cpx{f})$ (respectively, $(\xpdd{f},\cpx{f})$ efficiently
$3$-represents $N$, which means that in order for $(f,C)$ (or $(\xpdd{f},C)$ to
$3$-represent anything efficiently at all, we must have $C=\cpx{f}$.  However it
is still worth using low-defect pairs rather than just low-defect polynomials
since we may not always know $\cpx{f}$.  In our applications here, where we wish
to perform computations by means of these objects, taking the time to compute
$\cpx{f}$, rather than just making do with an upper bound, may not be desirable.

For this reason it makes sense to use ``$f$ efficiently $3$-represents $N$'' to
mean ``some $(f,C)$ efficiently $3$-represents $N$'' or equivalently
``$(f,\cpx{f})$ efficiently $3$-reperesents $N$''.  Similarly with $\xpdd{f}$.

In keeping with the name, numbers $3$-represented by low-defect polynomials, or
their augmented versions, have bounded defect.  Let us make some definitions
first:

\begin{defn}
\label{deltaf}
Given a low-defect pair $(f,C)$, we define $\dft(f,C)$, the defect of $(f,C)$,
to be $C-3\log_3 a$, where $a$ is the leading coefficient of $f$.  We will also
define $\dft(f)$ to mean $\dft(f,\cpx{f})$, since much of the time we will not
be concerned with keeping track of base complexities.
\end{defn}

One thing worth noting about defects of polynomials, that has not been noted
previously:

\begin{prop}
\label{fdft}
Let $(f,C)$ and $(g,D)$ be low-defect pairs.  If $\dft(f,C)=\dft(g,D)$, then
$C\equiv D \pmod{3}$.
\end{prop}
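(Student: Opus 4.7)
The plan is to reduce the statement directly to a standard fact about $\log_3$ of ratios of positive integers, essentially the same observation that underlies part~(4) of Proposition~\ref{oldprops}.

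Let $a$ denote the leading coefficient of $f$ and $b$ the leading coefficient of $g$. By the definition of $\dft(f,C)$, the hypothesis $\dft(f,C) = \dft(g,D)$ rearranges to
\[
C - D \;=\; 3\log_3 a - 3\log_3 b \;=\; 3\log_3(a/b).
\]
Since $C, D \in \Z$, the right-hand side is a rational number, so $\log_3(a/b) \in \Q$.

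Next, I would invoke the elementary fact that if $a, b$ are positive integers and $\log_3(a/b)$ is rational, then $a/b$ is an integer power of $3$. The quick justification: writing $\log_3(a/b) = p/q$ in lowest terms gives $a^q = b^q \cdot 3^p$, and on factoring $a = 3^\alpha m$, $b = 3^\beta n$ with $\gcd(mn, 3) = 1$, unique factorization forces $m^q = n^q$ (hence $m = n$) and $p = q(\alpha - \beta)$, so that $a/b = 3^{\alpha - \beta}$.

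Substituting back, $C - D = 3(\alpha - \beta) \in 3\Z$, which gives $C \equiv D \pmod{3}$. There is no real obstacle here; the content of the proposition is just that the definition of $\dft(f,C)$ is compatible with the mod-$3$ congruence structure, and this follows immediately from the non-existence of nontrivial rational relations involving $\log_3$ of integers coprime to $3$.
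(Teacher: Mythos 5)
Your argument is correct and is essentially identical to the paper's proof: both rearrange $\dft(f,C)=\dft(g,D)$ to $C-D=3\log_3(a/b)\in\Z$ and conclude that $\log_3(a/b)$ must be an integer. You simply spell out the unique-factorization justification for that last step, which the paper leaves implicit.
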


\begin{proof}
The proof is exactly the same as the proof of part (7) from
Proposition~\ref{oldprops}.  If $\dft(f,C)=\dft(g,D)$, then
\[ C - 3\log_3 a = D - 3\log_3 b, \]
where $a$ and $b$ are the leading coefficients of $f$ and $g$, respectively.
So $C-D = 3\log_3(\frac{a}{b})\in \Z$, and so in particular it is a rational
number, meaning $\log_3(\frac{a}{b})$ is in turn a rational number, which can
only happen if it is in fact an integer, so $3\mid C-D$ as required.
\end{proof}

\begin{defn}
\label{fdftdef}
Given a low-defect pair $(f,C)$ of degree $r$, we define
\[\dft_{f,C}(n_1,\ldots,n_r) =
C+3(n_1+\ldots+n_r)-3\log_3 f(3^{n_1},\ldots,3^{n_r}).\]
We will also define $\dft_f$ to mean $\dft_{f,\cpx{f}}$ when we are not
concerned with keeping track of base complexities.
\end{defn}

Then we have:

\begin{prop}[{\cite[Proposition~2.15]{intdft}}]
\label{dftbd}
Let $(f,C)$ be a low-defect pair of degree $r$, and let $n_1,\ldots,n_{r+1}$ be
nonnegative integers.
\begin{enumerate}
\item We have
\[ \dft(\xpdd{f}(3^{n_1},\ldots,3^{n_{r+1}}))\le \dft_{f,C}(n_1,\ldots,n_r)\]
and the difference is an integer.
\item We have \[\dft_{f,C}(n_1,\ldots,n_r)\le\dft(f,C)\]
and if $r\ge 1$, this inequality is strict.
\item The function $\dft_f$ is strictly increasing in each variable, and
\[ \dft(f) = \sup_{n_1,\ldots,n_d} \dft_f(n_1,\ldots,n_d).\]
\end{enumerate}
\end{prop}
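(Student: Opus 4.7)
The plan is to handle the three parts in order, using only the structural facts about low-defect polynomials already collected in Propositions~\ref{polystruct} and~\ref{basicub}. All the work is ultimately bookkeeping around the identity $3\log_3(xy)=3\log_3 x+3\log_3 y$.

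For (1), set $N=\xpdd{f}(3^{n_1},\ldots,3^{n_{r+1}})=f(3^{n_1},\ldots,3^{n_r})\cdot 3^{n_{r+1}}$, so that $3\log_3 N = 3\log_3 f(3^{n_1},\ldots,3^{n_r})+3n_{r+1}$. Subtracting from $\cpx{N}$, the inequality $\dft(N)\le \dft_{f,C}(n_1,\ldots,n_r)$ is equivalent to $\cpx{N}\le C+3(n_1+\cdots+n_{r+1})$, which is precisely the augmented upper bound in Proposition~\ref{basicub} together with $C\ge\cpx{f}$. The difference of the two sides is then $C+3(n_1+\cdots+n_{r+1})-\cpx{N}$, a nonnegative integer.

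For (2), compute directly
\[
\dft(f,C)-\dft_{f,C}(n_1,\ldots,n_r)=3\log_3\frac{f(3^{n_1},\ldots,3^{n_r})}{a\cdot 3^{n_1+\cdots+n_r}},
\]
where $a$ is the leading coefficient. By Proposition~\ref{polystruct} the polynomial $f$ is multilinear with nonnegative integer coefficients and leading term $a\,x_1\cdots x_r$, so the numerator is at least $a\cdot 3^{n_1+\cdots+n_r}$; when $r\ge 1$ the nonzero constant term of $f$ contributes a strictly positive extra summand, forcing strict inequality.

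For (3), fix an index $i$ and, viewing $f$ as linear in $x_i$, write $f=A\cdot x_i+B$ where $A$ and $B$ are polynomials in the remaining variables. Both $A$ and $B$ have nonnegative integer coefficients; $A$ is nonzero because its leading coefficient coincides with the leading coefficient of $f$, and $B$ is nonzero because its constant term coincides with the (nonzero) constant term of $f$. Hence evaluation at powers of $3$ yields strictly positive numbers $A',B'$. The change in $\dft_{f,C}$ upon replacing $n_i$ by $n_i+1$ equals $3-3\log_3\bigl((3A'\cdot 3^{n_i}+B')/(A'\cdot 3^{n_i}+B')\bigr)$, and the ratio in the logarithm is strictly less than $3$ because $B'>0$, proving strict monotonicity. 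For the supremum, as every $n_j\to\infty$ the ratio $f(3^{n_1},\ldots,3^{n_r})/(a\cdot 3^{n_1+\cdots+n_r})$ tends to $1$, so $\dft_f$ tends to $\dft(f)$; combined with the upper bound from (2) this identifies $\dft(f)$ as the supremum. The only real obstacle in all of this is verifying the positivity of the sub-polynomials $A$ and $B$ when one isolates a single variable, and this is precisely what the structural guarantees of Proposition~\ref{polystruct} supply.
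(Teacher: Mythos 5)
Your proof is correct, and since the paper only cites this result from \cite{intdft} rather than proving it, there is nothing to compare against in the text; your argument (reduce (1) to Proposition~\ref{basicub}, compute the difference in (2) as $3\log_3$ of the ratio of $f(3^{n_1},\ldots,3^{n_r})$ to its leading term, and isolate one variable via multilinearity for (3)) is exactly the standard computation one expects from the definitions together with Proposition~\ref{polystruct}. All the delicate points — $C\ge\cpx{f}$, the nonvanishing of the constant term and leading coefficient guaranteeing strictness and monotonicity, and the degenerate $r=0$ case — are handled correctly.
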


The defects we get from a low-defect polynomial $f$ form a well-ordered set of
order type approximately $\omega^d$:

\begin{prop}[{\cite[Proposition~2.16]{intdft}}]
\label{indivtype}
Let $f$ be a low-defect polynomial of degree $d$.  Then:
\begin{enumerate}
\item The image of $\dft_f$ is a well-ordered subset of $\mathbb{R}$, with
order type $\omega^d$.
\item The set of $\dft(N)$ for all $N$ $3$-represented by the augmented
low-defect polynomial $\xpdd{f}$ is a well-ordered subset of $\mathbb{R}$, with
order type at least $\omega^d$ and at most $\omega^d(\lfloor \delta(f)
\rfloor+1)<\omega^{d+1}$.  The same is true if $f$ is used instead of the
augmented version $\xpdd{f}$.
\end{enumerate}
\end{prop}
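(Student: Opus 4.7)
The plan is to prove both parts by induction on the degree $d$. For the base case $d=0$, $f$ is a positive integer constant $k$, so $\dft_f$ takes the single value $C-3\log_3 k$ and its image has order type $1=\omega^0$; for part (2), the values $\hat{f}(3^n)=k\cdot 3^n$ give defects forming a set of nonnegative reals differing pairwise by integers and bounded above by $\dft(f)$, hence a finite set of size at most $\lfloor\dft(f)\rfloor+1$.

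For the inductive step of part (1), I would view $f$ as multilinear in its last variable and write $f(x_1,\ldots,x_d)=g(x_1,\ldots,x_{d-1})x_d+h(x_1,\ldots,x_{d-1})$. A direct computation shows that with $n_1,\ldots,n_{d-1}$ fixed and $n_d\to\infty$, $\dft_f(n_1,\ldots,n_d)$ increases strictly to $C+3(n_1+\ldots+n_{d-1})-3\log_3 g(3^{n_1},\ldots,3^{n_{d-1}})=\dft_{g,C}(n_1,\ldots,n_{d-1})$. Unraveling the recursive construction in Definition~\ref{polydef} shows that $(g,C)$ is itself (the data of) a low-defect pair of degree $d-1$. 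By the inductive hypothesis the image of $\dft_{g,C}$ has order type $\omega^{d-1}$; combining this with the strict monotonicity in $n_d$ from Proposition~\ref{dftbd}(3) gives that the image of $\dft_f$ is well-ordered of order type $\omega^{d-1}\cdot\omega=\omega^d$.

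For part (2), Proposition~\ref{dftbd}(1) says every defect $\dft(\hat{f}(3^{n_1},\ldots,3^{n_{d+1}}))$ equals $\dft_{f,C}(n_1,\ldots,n_d)-k$ for some $k\in\Nn$. Since $\dft_{f,C}<\dft(f)$ and defects are nonnegative, $k$ takes at most $\lfloor\dft(f)\rfloor+1$ values, so the set of these defects lies in a union of at most $\lfloor\dft(f)\rfloor+1$ integer translates of the image of $\dft_f$, each order-isomorphic to it and hence of order type $\omega^d$ by part (1). This yields the upper bound $\omega^d(\lfloor\dft(f)\rfloor+1)<\omega^{d+1}$. For the lower bound, taking $n_{d+1}=0$ reduces $\hat{f}$-values to $f$-values; the resulting defects partition the index set $\mathbb{N}^d$ (according to the value of $k$) into at most $\lfloor\dft(f)\rfloor+1$ pieces. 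Since a set of order type $\omega^d$ cannot be written as a finite union of subsets of smaller order type (via a Cantor normal form argument on ordinal sums), at least one piece contributes order type $\omega^d$ to the set of defects.

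The main obstacle is the structural identification in the inductive step of part (1): because Definition~\ref{polydef} builds low-defect polynomials by alternating tensor products with the attach-variable-plus-constant operation, the last variable $x_d$ may be introduced at various points in the construction tree. Writing $f=gx_d+h$ cleanly and verifying that $(g,C)$ is a low-defect pair of degree $d-1$ requires a short case analysis on the final step of the recursive construction of $f$ (or, alternatively, a more conceptual argument via low-defect expressions/trees). Once this is handled, the convergence estimates, well-ordering, and the order-type bookkeeping proceed by routine arguments.
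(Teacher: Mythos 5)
This proposition is imported verbatim from \cite[Proposition~2.16]{intdft}; the present paper contains no proof of it, so your argument can only be judged on its own terms. Your overall architecture --- induction on $d$ for part (1), peeling off the last variable and using the monotone convergence $\dft_f(\vec n,n_d)\nearrow\dft_{g,C}(\vec n)$, then deducing part (2) from part (1) by splitting according to the integer drop $k$ and invoking the fact that a set of type $\omega^d$ is not a finite union of sets of smaller type --- is the standard one, and part (2) is essentially correct as written (for the upper bound you need the general form of Proposition~\ref{cutandpaste}, namely Carruth's bound that a union of $m$ well-ordered sets of type $\omega^d$ has type at most $\omega^d\cdot m$, but that is available from the cited sources).

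The genuine gap is in the sentence ``combining this with the strict monotonicity in $n_d$ \ldots gives that the image of $\dft_f$ is well-ordered of order type $\omega^{d-1}\cdot\omega=\omega^d$.'' What you actually have at that point is a well-ordered set $T=\mathrm{im}(\dft_{g,C})$ of type $\omega^{d-1}$ together with, for each $t\in T$, increasing $\omega$-sequences converging to $t$ from below. That data alone does not imply the union is well-ordered: the tails attached to different limit points can interleave, and nothing you have stated prevents, say, the initial terms $\dft_f(\vec n_i,0)$ from forming an infinite strictly decreasing sequence even while the limits $\dft_{g,C}(\vec n_i)$ increase. What rescues the argument is that $\dft_f$ is strictly increasing in \emph{all} $d$ variables simultaneously, so it is an order-preserving map from the well-partial-order $(\Nn^d,\le)$ into $\R$; Dickson's lemma then forbids infinite descending chains in the image, and the upper bound $\omega^d$ on the order type comes from the de~Jongh--Parikh bound on linearizations of $\Nn^d$ (or from a separate argument that the derived set of $\mathrm{im}(\dft_f)$ lies in $\overline{\mathrm{im}(\dft_{g,C})}$, which has type at most $\omega^{d-1}+1$). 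These facts are standard, but they are the substance of part (1) rather than ``routine bookkeeping''; as written, your inductive step asserts the conclusion instead of deriving it. By contrast, the structural point you single out as the main obstacle --- that the coefficient $g$ of $x_d$ underlies a low-defect pair $(g,C)$ --- really is a short and routine case analysis on the last construction step.
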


The reason we care about low-defect polynomials is that all numbers of
sufficiently low defect can be efficiently $3$-represented by them.  First,
some definitions:

\begin{defn}
A natural number $n$ is called a \emph{leader} if it is the smallest number with
a given defect.  By part (6) of Proposition~\ref{oldprops}, this is equivalent
to saying that either $3\nmid n$, or, if $3\mid n$, then $\dft(n)<\dft(n/3)$,
i.e., $\cpx{n}<3+\cpx{n/3}$.
\end{defn}

\begin{defn}
For any real $s\ge0$, define the set $\overline{A}_s$ to be 
\[\overline{A}_s := \{n\in\mathbb{N}:\dft(n)\le s\}.\]
Define the set $\overline{B}_s$ to be 
\[
\overline{B}_s:= \{n \in \overline{A}_s :~~n~~\mbox{is a leader}\}.
\]
\end{defn}

The use of the overline here is to contrast $\overline{A}_s$ and
$\overline{B}_s$, which use nonstrict inequalities in the definition, with the
earlier $A_s$ and $B_s$ from \cite{paper1}, which were the same but using strict
inequalities.

Obviously, one has:

\begin{prop}
\label{arbr}
For every $n\in \overline{A}_r$, there exists a unique $m\in \overline{B}_r$ and
$k\ge 0$ such that $n=3^k m$ and $\dft(n)=\dft(m)$; then $\cpx{n}=\cpx{m}+3k$.
\end{prop}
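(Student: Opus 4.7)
This proposition is indeed nearly immediate from Proposition~\ref{oldprops}; the plan is to combine parts (6) and (2) of that proposition with the definition of a leader.

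For existence, given $n \in \overline{A}_r$, let $\alpha = \dft(n)$. By part~(6) of Proposition~\ref{oldprops}, the set $\{p \in \N : \dft(p) = \alpha\}$ has either the form $\{m_0 3^j : 0 \le j \le L\}$ or the form $\{m_0 3^j : 0 \le j\}$ for some $m_0$; in both cases there is a unique smallest element $m$. Since $n$ is in this set, $n = 3^k m$ for some $k \ge 0$. By the definition of a leader, $m$ is a leader, and since $\dft(m) = \alpha \le r$, we have $m \in \overline{B}_r$.

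For uniqueness, suppose $n = 3^{k'} m'$ for some other $m' \in \overline{B}_r$ with $\dft(n) = \dft(m')$. Then $m'$ has defect $\alpha$ and is a leader, so it is the minimum of $\{p : \dft(p) = \alpha\}$, forcing $m' = m$; then $k' = k$ follows. Finally, the complexity identity $\cpx{n} = \cpx{m} + 3k$ is immediate from part~(2) of Proposition~\ref{oldprops}: since $\dft(3^k m) = \dft(m)$, equality holds in $\dft(3^k m) \le \dft(m)$, which the proposition states is equivalent to $\cpx{3^k m} = 3k + \cpx{m}$.

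There is no main obstacle here; the work has already been done in establishing Proposition~\ref{oldprops}, and the only subtlety is noting that the leader guaranteed to exist in the set of numbers of defect $\alpha$ automatically lies in $\overline{B}_r$ because its defect is exactly $\alpha \le r$.
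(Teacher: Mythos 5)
Your proof is correct. The paper itself disposes of this by citing Proposition~2.6 of \cite{paperwo} (noting only the change from $\dft(n)<r$ to $\dft(n)\le r$), and your self-contained argument via parts (6) and (2) of Proposition~\ref{oldprops} together with the definition of a leader is exactly the natural argument underlying that citation.
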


\begin{proof}
This is exactly Proposition~2.6 from \cite{paperwo}, except that we are looking
at $n$ with $\dft(n)\le r$, instead of $\dft(n)<r$.
\end{proof}

Then, it was shown in \cite{theory} (Theorem~A.7) that:

\begin{thm}
\label{covering}
For any real $s\ge 0$, there exists a finite set $\sS_s$ of low-defect pairs
satisfying the following conditions:
\begin{enumerate}
\item For any $n\in \overline{B}_s$, there is some low-defect pair in $\sS_s$
that efficiently $3$-represents $n$.
\item Each pair $(f,C)\in \sS_s$ satisfies $\dft(f,C)\le s$, and hence $\deg
f\le \lfloor s\rfloor$.
\end{enumerate}
We refer to such a set $\sS_s$ as a \emph{good covering} of $\overline{B}_s$.
\end{thm}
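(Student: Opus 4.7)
The plan is to build low-defect pairs by structural induction on the shortest $(1,+,\cdot)$-expression for each leader $n$ with $\dft(n)\le s$, using the three rules of Definition~\ref{polydef}. Given such a leader $n$ with $\cpx{n}>1$ and a shortest expression $E$ for $n$, we case-split on the top-level operation of $E$.

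In the multiplicative case $n=n_1n_2$ with $\cpx{n}=\cpx{n_1}+\cpx{n_2}$, the fact that $n$ is a leader forces $n_1$ and $n_2$ themselves to be leaders: any extra power of $3$ appearing in $n_1n_2$ could be factored out of $n$ while preserving the defect, contradicting leadership. Each $n_i$ has smaller complexity, and $\dft(n_i)\le\dft(n)\le s$, so by induction we obtain pairs $(f_1,C_1)$ and $(f_2,C_2)$ efficiently 3-representing $n_1$ and $n_2$. We combine them via rule~(2) to form $(f_1\otimes f_2,\,C_1+C_2)$, and the identity $\dft(n)=\dft(n_1)+\dft(n_2)$ shows this pair efficiently 3-represents $n$ with total defect at most $s$.

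In the additive case $n=a+b$ with $\cpx{n}=\cpx{a}+\cpx{b}$ and $a\ge b$, the identity $\dft(a+b)=\dft(a)+\dft(b)+3\log_3\bigl(ab/(a+b)\bigr)$, together with the inequality $ab/(a+b)\ge\min(a,b)/2$ for $a,b\ge 2$, shows that the smaller summand is bounded: $b\le 2\cdot 3^{s/3}$. Writing $a=3^km$ with $m$ a leader, we apply the inductive hypothesis to $m$ (smaller complexity, defect at most $s$) to obtain a pair $(g,D)$, then invoke rule~(3) with constant $c=b$ to form $(g\otimes x+b,\,D+\cpx{b})$, which efficiently 3-represents $n$.

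The main obstacle is guaranteeing that only finitely many distinct pairs arise. The degree bound $\deg f\le\lfloor s\rfloor$ follows from $\dft(f,C)\le s$ combined with equation~(\ref{dftineq}), and the constants appearing at rule~(3) steps are bounded by $2\cdot 3^{s/3}$ as above. But a naive recursion can still produce arbitrarily deep construction trees whose resulting polynomials have defect far exceeding $s$---this happens if we insist on decomposing every sub-leader all the way down to leaves labeled by $1$. The essential fix is to truncate the recursion: whenever a sub-leader encountered along the way has small enough defect relative to the remaining budget, we use it directly as a constant-polynomial leaf via rule~(1) rather than decomposing further. Carefully balancing the defect bookkeeping so that each constructed pair satisfies $\dft(f,C)\le s$, and using the well-ordering of $\D$ to ensure the pool of small constants available at rule~(1) leaves is finite, then yields finiteness of $\sS_s$.
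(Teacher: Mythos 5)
Your outline follows the same broad strategy as the actual proof of this result (which the paper only quotes, from Theorem~A.7 of \cite{theory}): induct on most-efficient expressions, use the identity $\dft(a+b)=\dft(a)+\dft(b)+3\log_3\frac{ab}{a+b}$ to bound the additive constants, and assemble pairs via rules (1)--(3). The bound $b\le 2\cdot3^{s/3}$ and the observation that the factors in a most-efficient factorization of a leader are themselves leaders are both correct. But there are two bookkeeping errors and, more importantly, the step you defer to ``careful balancing'' is the actual content of the theorem, and the specific fix you propose for it does not work.

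The bookkeeping errors: (i) in the additive case the inductive hypothesis need not apply to the sub-part, since the correction term $3\log_3\frac{ab}{a+b}$ is negative for small summands, so $\dft(a)$ can exceed $\dft(n)\le s$ (e.g.\ $n=2=1+1$ has $\dft(1)=1>\dft(2)$); (ii) in the multiplicative case, condition (2) of the theorem concerns $\dft(f_1\otimes f_2,C_1+C_2)=\dft(f_1,C_1)+\dft(f_2,C_2)$, which your inductive hypothesis bounds only by $2s$, not by $s$ (and note $\dft(f_i,C_i)$ is in general strictly larger than $\dft(n_i)$). One must carry a defect budget that is split at each multiplication and partially spent at each addition; your induction statement does not do this.

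The serious gap is finiteness. Truncating the recursion at sub-leaders of ``small enough defect'' and citing the well-ordering of $\D$ to get a finite pool of rule-(1) constants fails on both counts: the set of leaders of defect at most $t$ is exactly $\overline{B}_t$, which is infinite once $t$ exceeds roughly $1+\dft(2)$ (it contains all $2\cdot3^k+1$), and well-ordering of the set of defect \emph{values} says nothing about finiteness of the set of leaders realizing them. Moreover, the well-ordering of $\D$ is deduced \emph{from} this covering theorem in \cite{paperwo}, so appealing to it here is circular. What the real proof requires at this point is a quantitative classification theorem (descending from Theorem~29 of \cite{paper1}): in a most-efficient representation of a leader of defect at most $s$, not only the additive constants but also the rule-(1) atoms are bounded in \emph{magnitude} by an explicit function of $s$, and the recursion tree has bounded size because each additional factor or addend costs at least a fixed positive amount of defect --- which in turn needs the base-case fact that the least positive defect is $\dft(2)>0$, i.e., the explicit classification of numbers of very small defect. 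None of this is supplied or even identified in the sketch, so the proposal as written does not establish the theorem.
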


Moreover, it was shown in \cite{paperalg} (see Algorithm~5 and the appendix)
that:
\begin{thm}
\label{goodcomput}
Given a real number $s$ of the form $q+r\log_3 n$ with $n\in \N$, $q,r\in \Q$,
it is possible to algorithmically compute a good covering of $\overline{B}_s$.
\end{thm}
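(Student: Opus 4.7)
The plan is to algorithmize the existence proof of Theorem~\ref{covering} by enumerating candidate low-defect pairs in the bottom-up recursive manner of Definition~\ref{polydef} and pruning by the defect condition $\dft(f, C) \le s$.

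First, I would verify that all comparisons needed by the algorithm are effectively decidable. Every quantity that arises --- the input $s = q + r\log_3 n$, each defect $\dft(f, C) = C - 3\log_3 a$ with $a$ the leading coefficient of $f$, and all sums and differences of these --- has the form $q' + r'\log_3 m$ for rationals $q', r'$ and $m \in \N$. After clearing denominators, an inequality between two such quantities reduces to a comparison of two positive rationals of the form $3^\alpha m_1^{\beta_1} m_2^{\beta_2}$ with integer exponents, which is decidable by exact rational arithmetic. Hence $\dft(f, C) \le s$ can be tested at each step.

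Next, I would enumerate pairs bottom-up by base complexity $C$ via the three constructions of Definition~\ref{polydef}:
\begin{enumerate}
\item constant pairs $(k, C)$ with $\cpx{k} \le C$, using computability of $\cpx{k}$ by dynamic programming;
\item tensor products $(f_1 \otimes f_2, C_1 + C_2)$ of previously enumerated pairs;
\item augmentations $(f \otimes x_1 + c, C' + D)$ with $\cpx{c} \le D$ and $(f, C')$ previously enumerated,
\end{enumerate}
discarding pairs with $\dft(f, C) > s$ at each step. Recursion depth is bounded by $\lfloor s \rfloor$ by Theorem~\ref{covering}(2), and within each depth each augmentation strictly increases defect by at least $D \ge 1$, while each product splits the defect budget between operands.

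The main obstacle is to make the enumeration finite at each depth, because naively it is not: pairs such as $(3^k x_1 + 1, 3k+1)$ have defect $1$ for every $k \ge 1$. These are redundant relative to $(3x_1 + 1, 4)$, in that both efficiently $3$-represent exactly $\{3^m + 1 : m \ge 1\}$. The fix is to normalize the enumeration, restricting to pairs $(f, \cpx{f})$ of minimal base complexity and detecting when two enumerated pairs efficiently $3$-represent the same set of leaders, keeping only one representative. Once normalized, each depth contributes finitely many pairs. Correctness then follows from Theorem~\ref{covering}: a good covering exists, and every pair in it is captured, up to this equivalence, by our normalized enumeration, so the surviving set --- after a final check against the efficient-$3$-representation property --- is itself a good covering of $\overline{B}_s$.
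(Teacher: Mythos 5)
The paper does not actually prove this theorem; it imports it from \cite{paperalg} (Algorithm~5 and its appendix), where the algorithm is obtained by making the inductive construction behind Theorem~\ref{covering} effective---building the covering for defect bound $s$ out of coverings for smaller bounds via explicit operations on low-defect pairs, with correctness inherited from that existence proof once $\cpx{n}$, stability, and $K(n)$ are known to be computable (Theorem~\ref{computk}). Your enumerate-and-prune strategy is a different route, and its first step (decidability of comparisons between quantities of the form $q'+r'\log_3 m$ by clearing denominators and comparing integers) is correct and genuinely needed. But the rest has gaps I do not see how to close.

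The central problem is that your enumeration has no stopping criterion, and your normalization does not make it finite. Even restricting to pairs $(f,\cpx{f})$ and identifying pairs that efficiently $3$-represent the same set of leaders, infinitely many inequivalent pairs with $\dft(f,C)\le s$ survive: for $s\ge 1$ the constant pairs $(3^k+1,\cpx{3^k+1})$ are pairwise inequivalent (each represents only itself, and each $3^k+1$ is a leader of defect less than $1$); and your own motivating example is not collapsed either, since $(3^kx_1+1,3k+1)$ represents $\{3^j+1: j\ge k\}$, a different set for each $k$. (Also, Theorem~\ref{covering}(2) bounds only the degree, i.e.\ the number of applications of rule (3) of Definition~\ref{polydef}; it bounds neither the constants nor the number of products, so ``depth'' is not controlled.) Moreover, you give no decision procedure for the equivalence you quotient by, nor for the ``final check'' that a surviving finite set satisfies condition (1) of Theorem~\ref{covering}---both quantify over the infinite sets $\Nn^d$ and $\overline{B}_s$ and over values of $\cpx{\cdot}$ at infinitely many inputs. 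Knowing abstractly that a good covering exists does not tell the algorithm when it has found one. Even the natural repair---dovetail over all finite candidate sets and verify each---reduces the theorem to deciding whether a given finite set of low-defect pairs is a good covering of $\overline{B}_s$, which is exactly the hard content your sketch leaves unaddressed and which the cited construction avoids by building the covering explicitly rather than searching for it.
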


(The assumption on the form of $s$ here is inessential and is just to restrict
to a computable subset of real numbers; one can state the theorem more generally
than this.)

In this paper we will be concerned with proving that certain low-defect pairs,
the \emph{substantial} low-defect pairs, efficiently $3$-represent ``most'' of
the numbers that they $3$-represent.  In order to discuss this, it helps to make
the following definition:

\begin{defn}
\label{except}
Let $(f,C)$ be a low-defect pair of degree $d$. Define its
\emph{exceptional set} to be
\[
\{(n_1,\ldots,n_d): \cpx{f(3^{n_1},\ldots,3^{n_d})}_\st<C+3(n_1+\ldots+n_d)\}
\]
We will also say ``the exceptional set of $f$'' to simply mean the exceptional
set of $(f,\cpx{f})$.
\end{defn}

Finally, one more property of low-defect polynomials we will need is the
following:

\begin{prop}[{\cite[Proposition~3.24]{theory}}]
\label{ineq}
Let $f$ be a low-defect polynomial, and suppose that $a$ is the leading
coefficient of $f$.  Then $\cpx{f}\ge \cpx{a} + \deg f$, which also implies
$\cpx{f}\ge \cpx{a}_\st + \deg f$.

In particular, $\dft(f) \ge \dft(a) + \deg f$ and
$\dft(f) \ge \dft_\st(a) + \deg f$.
\end{prop}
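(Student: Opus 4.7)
The plan is to induct on the recursive construction of low-defect pairs given in Definition~\ref{polydef}. It suffices to show that for every low-defect pair $(f,C)\in\mathscr{P}$ with leading coefficient $a$ and degree $d$, one has $C\ge \cpx{a}+d$; then taking the minimum over $C$ yields the bound on $\cpx{f}$, the stable version follows from part~(6) of Proposition~\ref{stoldprops} (namely $\cpx{a}_\st\le\cpx{a}$), and the defect inequalities follow by subtracting $3\log_3 a$ from both sides.

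For the base case, a pair of the form $(k,C)$ with $k\in\N$ a constant has $a=k$, $d=0$, and $C\ge \cpx{k}=\cpx{a}+d$, so the inequality is immediate. For the product rule, suppose $(f,C)=(f_1\otimes f_2, C_1+C_2)$ with $(f_i,C_i)$ of degree $d_i$ and leading coefficient $a_i$. By definition of $\otimes$, $f$ has degree $d_1+d_2$ and, reading off the coefficient of the top monomial, leading coefficient $a_1 a_2$. The inductive hypothesis gives $C_i\ge \cpx{a_i}+d_i$, so adding and invoking submultiplicativity $\cpx{a_1 a_2}\le\cpx{a_1}+\cpx{a_2}$ yields $C\ge \cpx{a_1 a_2}+(d_1+d_2)$, as required.

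For the third rule, suppose $(f,C)=(g\otimes x_1+c,\, C_g+D)$ with $(g,C_g)$ of degree $d_g$ and leading coefficient $a_g$, and with $D\ge\cpx{c}$. The polynomial $g\otimes x_1+c$ has degree $d_g+1$, and since adding a constant does not affect the top-degree monomial, its leading coefficient is again $a_g$. By induction $C_g\ge\cpx{a_g}+d_g$, so
\[
C = C_g + D \;\ge\; \cpx{a_g}+d_g+\cpx{c} \;\ge\; \cpx{a_g}+(d_g+1),
\]
where the last step uses $\cpx{c}\ge 1$ (valid since $c$ is a positive integer). Since $a_g$ is also the leading coefficient of $f$ and $d_g+1=\deg f$, this is exactly the desired bound.

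The only real subtlety — and the one step worth being careful about — is tracking the leading coefficient through the $g\otimes x_1+c$ construction and confirming that the single extra unit of complexity contributed by the constant $c$ is precisely what compensates for the degree going up by one. Once that bookkeeping is in hand, the rest of the argument is routine structural induction and the two corollaries on stable complexity and on defects are formal consequences.
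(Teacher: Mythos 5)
Your proof is correct, and since the paper only cites this result from \cite[Proposition~3.24]{theory} without reproving it, your structural induction on the three formation rules of Definition~\ref{polydef} (tracking leading coefficient and degree, using subadditivity $\cpx{a_1a_2}\le\cpx{a_1}+\cpx{a_2}$ for products and $\cpx{c}\ge 1$ for the $g\otimes x_1+c$ step) is exactly the standard argument one expects the cited source to give. The passage to $\cpx{a}_\st$ and to the defect inequalities is handled correctly as well.
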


With this, we have the preliminary notions and terminology out of the way.
However, we will also take a moment to discuss some alternate formalisms.

\subsection{Low-defect expressions and trees}

Now, it is mathematically convenient to phrase things in terms of polynomials,
but sometimes we want a finer-grained view of things.  Rather than look at a
polynomial $f$, we may want to look at the expression that gives rise to it.

That is to say, if we have a low-defect polynomial $f$, it was constructed
according to rules (1)--(3) in Definition~\ref{polydef}; each of these rules
though gives a way not just of building up a polynomial, but an expression.  For
instance, we can build up the polynomial $4x+2$ by using rule (1) to make $2$,
then using rule (3) to make $2x+1$, then using rule (2) to make $2(2x+1)=4x+2$.
The polynomial $4x+2$ itself does not remember its history, of course; but
perhaps we want to remember its history -- in which we do not want to consider
the \emph{polynomial} $4x+2$, but rather the \emph{expression} $2(2x+1)$, which
is different from the expression $4x+2$.

So, with that, we define:

\begin{defn}
A \emph{low defect expression} is defined to be an expression in positive
integer constants, $+$, $\cdot$, and some number of variables, constructed
according to the following rules:
\begin{enumerate}
\item Any positive integer constant $c$ by itself forms a low-defect expression.
\item Given two low-defect expressions using disjoint sets of variables, their
product is a low-defect expression.  If $E_1$ and $E_2$ are low-defect
expressions, we will use $E_1 \otimes E_2$ to denote the low-defect expression
obtained by first relabeling their variables to be disjoint and then multiplying
them.
\item Given a low-defect expression $E$, a positive integer constant $c$, and a
variable $x$ not used in $E$, the expression $E\cdot x+c$ is a low-defect
expression.  (We can write $E\otimes x+c$ if we do not know in advance that $x$
is not used in $E$.)
\end{enumerate}
\end{defn}

And, naturally, we also define:

\begin{defn}
We define an \emph{augmented low-defect expression} to be an expression of the
form $E\cdot x$, where $E$ is a low-defect expression and $x$ is a variable not
appearing in $E$.  If $E$ is a low-defect expression, we also denote the
augmented low-defect expression $E\otimes x$ by $\xpdd{E}$.
\end{defn}

It is clear from the definitions that evaluating a low-defect expression yields
a low-defect polynomial, and that evaluating an augmented low-defect expression
yields an augmented low-defect polynomial.  Note also that low-defect
expressions are read-once expressions, so, as mentioned earlier, low-defect
polynomials are read-once polynomials.

All of the results of Section~\ref{secpoly}, which were stated in terms of
low-defect pairs, can instead be stated in terms of low-defect expressions,
though we will not restated them in this way here.  Note that for this we need
the notion of the complexity of a low-defect expression:

\begin{defn}
We define the complexity of a low-defect expression $E$, denoted $\cpx{E}$, as
follows:
\begin{enumerate}
\item If $E$ is a positive integer constant $n$, we define $\cpx{E}=\cpx{n}$.
\item If $E$ is of the form $E_1 \cdot E_2$, where $E_1$ and $E_2$ are
low-defect expressions, we define $\cpx{E}=\cpx{E_1}+\cpx{E_2}$.
\item If $E$ is of the form $E' \cdot x + c$, where $E'$ is a low-defect
expression, $x$ is a variable, and $c$ is a positive integer constant, we define
$\cpx{E}=\cpx{E'}+\cpx{c}$.
\end{enumerate}
\end{defn}

In addition, we can helpfully represent a low-defect expression by a rooted
tree, with the vertices and edges both labeled by positive integers.  Some
information is lost in this representation, but nothing of much relevance.  This
representation does away with such problems as, for instance, $4$ and $2\cdot 2$
being separate expressions.  In addition, trees can be treated more easily
combinatorially, which will prove useful in a sequel paper \cite{seqest}.

\begin{defn}
Given a low-defect expression $E$, we define a corresponding \emph{low-defect
tree} $T$, which is a rooted tree where both edges and vertices are labeled with
positive integers.  We build this tree as follows:
\begin{enumerate}
\item If $E$ is a constant $n$, $T$ consists of a single vertex labeled with
$n$.
\item If $E=E'\cdot x + c$, with $T'$ the tree for $E'$, $T$ consists of $T'$
with a new root attached to the root of $T'$.  The new root is labeled with a
$1$, and the new edge is labeled with $c$.
\item If $E=E_1 \cdot E_2$, with $T_1$ and $T_2$ the trees for $E_1$ and $E_2$
respectively, we construct $E$ by ``merging'' the roots of $E_1$ and $E_2$ --
that is to say, we remove the roots of $E_1$ and $E_2$ and add a new root, with
edges to all the vertices adjacent to either of the old roots; the new edge
labels are equal to the old edge labels.  The label of the new root is equal
to the product of the labels of the old roots.
\end{enumerate}
\end{defn}

We can define an associated base complexity for these too:

\begin{defn}
The complexity of a low-defect tree, $\cpx{T}$, is defined to be the smallest
$\cpx{E}$ among all low-defect expressions yielding $T$.
\end{defn}

We also, for expressions and trees, have the following concrete expression for
the complexity:

\begin{prop}[{\cite[Proposition~3.23]{theory}}]
\label{treecpx}
We have:
\begin{enumerate}
\item Let $E$ be a low-defect expression.  Then $\cpx{E}$ is equal to the sum of
the complexities of all the integer constants occurring in $E$.
\item Let $T$ be a low-defect tree.  Then
\[ \cpx{T} = \sum_{e\ \textrm{an edge}} \cpx{w(e)} +
	\sum_{v\ \textrm{a leaf}} \cpx{w(v)}
	+ \sum_{\substack{v\ \textrm{a non-leaf vertex}\\ w(v)>1}} \cpx{w(v)},\]
where $w$ denotes the label of the given vertex or edge.
\end{enumerate}
\end{prop}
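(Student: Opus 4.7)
The plan is to handle the two parts separately but with the same overall inductive structure. Part~(1) follows by a direct induction on the construction of $E$: the three recursive cases in the definition of $\cpx{E}$ correspond exactly to the three rules for building a low-defect expression. In the constant case, $\cpx{E}=\cpx{n}$ matches the unique constant $n$ occurring in $E$; in the product case, the integer constants of $E_1\otimes E_2$ are the disjoint union of those of $E_1$ and those of $E_2$, and the complexities add; and in the rule-(3) case, $E'\otimes x + c$ introduces the new constant $c$ and adds $\cpx{c}$ to the total complexity inherited from $E'$. This is essentially unfolding the recursive definition.

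For part~(2), I would prove the upper and lower bounds on $\cpx{T}$ separately. For the upper bound, I construct a canonical expression $E_T$ yielding $T$ by traversing $T$ downward from the root: use rule~(1) for each leaf label, rule~(3) for each edge (with the edge label as the additive constant $c$), and at each non-leaf vertex $v$ with $w(v)>1$ factor in the literal constant $w(v)$ via a rule-(2) multiplication; non-leaf vertices labeled $1$ need no literal constant, since the root label created by a rule-(3) application is always $1$. By part~(1), $\cpx{E_T}$ equals the sum of $\cpx{c}$ over all integer constants thus introduced, which is exactly the formula.

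For the lower bound, given any low-defect expression $E$ yielding $T$, I would charge each of its integer constants to a feature of $T$. The rule-(3) applications in the construction of $E$ biject with the edges of $T$, with the $c$ used in each rule equal to the corresponding edge label; these contribute $\sum_e \cpx{w(e)}$. The remaining integer constants of $E$ (those introduced by rule~(1)) split into two kinds: those that appear as leaves of $T$, contributing $\sum_{v\,\text{leaf}}\cpx{w(v)}$, and those that get absorbed into non-leaf vertex labels via rule-(2) merging at the root of some subexpression. For a non-leaf vertex $v$ with $w(v)>1$, the absorbed literal constants multiply (together with any $1$'s coming from rule-(3) roots merged in at $v$) to give $w(v)$, so by the subadditivity $\cpx{ab}\le\cpx{a}+\cpx{b}$ of integer complexity their complexities sum to at least $\cpx{w(v)}$. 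Non-leaf vertices with label $1$ contribute nothing, since all factors at such a vertex can be $1$'s supplied by rule-(3) roots.

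The main obstacle is the bookkeeping for the lower bound: I need a clean way to track, for an arbitrary expression $E$, how its literal constants are distributed among the edges of $T$, the leaves of $T$, and the merged-into non-leaf vertices of $T$. A clean formulation is a joint structural induction on the construction of $E$, maintaining the invariant that after processing any sub-expression, every integer constant it contains has been charged to exactly one feature of the corresponding subtree; the inductive step for rule~(2), where two subtrees' roots get merged and some leaf-constants of the sub-expressions are ``absorbed'' into the merged root's label, is the only step requiring real care.
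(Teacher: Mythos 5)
The paper does not actually prove this proposition; it is imported verbatim from \cite[Proposition~3.23]{theory}, so there is no in-paper argument to compare against. Your proof is the natural one and is essentially sound: part~(1) really is just unfolding the recursive definition of $\cpx{E}$, and for part~(2) the upper bound via a canonical expression and the lower bound via charging each literal constant to one edge or one vertex of $T$ both go through. The one imprecision is your dichotomy for the rule-(1) constants into ``those that appear as leaves'' versus ``those absorbed into non-leaf vertex labels'': several rule-(1) constants can be merged by rule~(2) into a single vertex that ends up as a \emph{leaf} of $T$ (e.g.\ the expression $2\cdot 2$ yields the one-vertex tree labeled $4$), so a leaf label need not coincide with any single literal constant. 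The fix is the tool you already invoke for non-leaf vertices: the constants absorbed into a leaf $v$ multiply to $w(v)$, so subadditivity $\cpx{ab}\le\cpx{a}+\cpx{b}$ gives $\sum_i\cpx{n_i}\ge\cpx{w(v)}$; and since a rule-(3) root is labeled $1$ and always has a child, no such root can ever be absorbed into a leaf, so every leaf is indeed covered by at least one rule-(1) constant. With that adjustment your charging invariant closes and the induction is complete.
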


Also worth noting is the following:

\begin{prop}
\label{treelead}
Let $T$ be a low-defect tree, let $V$ and $E$ be its vertex set and edge set,
let $f$ be the low-defect polynomial arising from it, and let $N$ be its leading
coefficient.  Then $N$ is equal to the product of all vertex labels in $T$, and
$\deg f = |V|-1 = |E|$.
\end{prop}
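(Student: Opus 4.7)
The plan is to prove both assertions simultaneously by structural induction on the construction of the low-defect tree $T$ according to the three clauses of its definition. At each stage we track four quantities: $|V|$, $|E|$, the product $\Pi(T)$ of all vertex labels of $T$, and the pair $(\deg f, N)$ coming from the associated low-defect polynomial $f$. The invariants we must maintain are $N = \Pi(T)$ and $\deg f = |V|-1 = |E|$.

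For the base case, $T$ is a single vertex labeled $n$, so $f = n$ is a constant, giving $\deg f = 0$, $N = n$, $|V|=1$, $|E|=0$, and $\Pi(T)=n$; all three equalities hold trivially. For the ``add new root'' step we have $T$ built from $T'$ by attaching a fresh root (label $1$, new edge label $c$) to the old root of $T'$, and the polynomial passes from $f'$ to $f'\otimes x + c$. By Proposition~\ref{polystruct} the leading coefficient of $f'\otimes x+c$ equals that of $f'$, so $N = N'$; and since the only new vertex has label $1$, also $\Pi(T)=\Pi(T')\cdot 1 = N' = N$. Counting gives $|V|=|V(T')|+1$, $|E|=|E(T')|+1$, and $\deg f = \deg f' + 1$, so the inductive hypothesis for $T'$ propagates to $T$.

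The ``merge'' step is the one that requires slightly more bookkeeping. Here $T$ arises from $T_1$ and $T_2$ by discarding their two roots, introducing a single new root whose label is the product of the two discarded labels, and reattaching every edge formerly incident to an old root (with the same edge label) to the new root. Consequently $|V| = |V(T_1)|+|V(T_2)|-1$ and $|E|=|E(T_1)|+|E(T_2)|$, while $\Pi(T)=\Pi(T_1)\Pi(T_2)$ because the product over the non-root vertices is unchanged and the new root label equals the product of the two old root labels. The associated polynomial is $f = f_1 \otimes f_2$ on disjoint variable sets, so $\deg f = \deg f_1 + \deg f_2$ and, by the definition of $\otimes$, the leading coefficient is $N_1 N_2$. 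Plugging in the inductive hypotheses for $T_1$ and $T_2$ yields $N = N_1 N_2 = \Pi(T_1)\Pi(T_2) = \Pi(T)$ and $|V|-1 = (|V(T_1)|-1)+(|V(T_2)|-1) = \deg f_1 + \deg f_2 = \deg f = |E|$.

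There is no real obstacle: the proposition is essentially a bookkeeping identity, and the only subtlety is the ``merge'' step, where one must verify that combining two root labels into a single new label is exactly what is needed so that $\Pi$ is multiplicative and that the merger lowers the vertex count by exactly one without changing the edge count. Once those accounting facts are stated carefully, the induction closes immediately.
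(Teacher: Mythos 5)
Your proof is correct, but it is worth noting that it is not the route the paper takes: the paper disposes of this proposition in one line by citing Proposition~3.14 of the earlier paper \cite{theory} (for the leading-coefficient claim) together with Proposition~\ref{polystruct}, whereas you supply a self-contained structural induction that in effect reproves the cited result. Your three cases track the three clauses of the tree-building definition faithfully, and the bookkeeping is right in each: in particular you correctly identify the merge step as the only place where anything happens (two roots collapse to one, so $|V|$ drops by one while $|E|$ and the label product are additive/multiplicative, matching $\deg(f_1\otimes f_2)=\deg f_1+\deg f_2$ and the fact that the coefficient of $x_1\cdots x_{d_1+d_2}$ in $f_1\otimes f_2$ is the product of the two leading coefficients). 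One tiny misattribution: the fact that $f'\otimes x+c$ has the same leading coefficient as $f'$ is not really what Proposition~\ref{polystruct} says (that proposition only records multilinearity and nonvanishing of the leading coefficient); it follows instead directly from the definition of $\otimes$ in Definition~\ref{polydef}, since the coefficient of $x_1\cdots x_{d+1}$ in $f'(x_1,\ldots,x_d)x_{d+1}+c$ is the coefficient of $x_1\cdots x_d$ in $f'$. With that citation corrected, your argument stands on its own; what it buys over the paper's version is self-containedness, at the cost of redoing work the authors preferred to import.
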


\begin{proof}
This is just Proposition~3.14 from \cite{theory} combined with
Proposition~\ref{polystruct} above.
\end{proof}

Note that for a low-defect polynomial $f$, $\cpx{f}$ can be equivalently
characterized as the smallest $\cpx{T}$ among all expressions $E$ or trees $T$
yielding $f$.

So, we get a chain from more information preserved to least information
preserved.  Most specific is the low-defect expression $E$; this can then be
represented by a tree $T$; this can then be evaluated to get a polynomial $f$,
which we can associate with a base complexity $\cpx{T}$ to get the low-defect
pair $(f,\cpx{T})$; and finally we can just look at $f$ itself, getting the
low-defect pair $(f,\cpx{f})$.

In truth, we could add a few more steps here, such as a tree-pair $(T,C)$ or
expression-pair $(E,C)$; or, most specific of all, a low-defect expression $E$
where each numerical constant $n$ is replaced by a specific
$(1,+,\cdot)$-expression that represents it.  But none of this will be necessary
here; expressions and trees will suffice for now.

\subsection{Some topological and order preliminaries}

Before we proceed, we should make notes of some facts from topology and order
theory that we will need.

\begin{prop}[\cite{MOtopologies}]
\label{topologies}
Let $X$ be a totally ordered set with the least upper bound property, and let
$S\subseteq X$ be closed.  Then the subspace topology and the order topology on
$S$ coincide.
\end{prop}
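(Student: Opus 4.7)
The plan is to verify the two inclusions separately. The inclusion $\tau_{\mathrm{ord}} \subseteq \tau_{\mathrm{sub}}$ holds with no hypotheses on $X$ or $S$: each basic order-open set $\{s\in S : s<\beta\}$ or $\{s\in S : s>\alpha\}$ (with $\alpha,\beta\in S$) is the intersection of $S$ with an order-open ray in $X$. So the substantive content is the reverse inclusion $\tau_{\mathrm{sub}} \subseteq \tau_{\mathrm{ord}}$, and for this it suffices to show that each basic subspace-open set of the form $(a,b)\cap S$ with $a,b\in X\cup\{-\infty,+\infty\}$ is open in the order topology on $S$ (the one-sided cases are handled identically).

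Fix $x \in (a,b)\cap S$. The plan is to replace the ``external'' bounds $a,b$ by ``internal'' bounds $a',b' \in S\cup\{\pm\infty\}$ that produce the same trace on $S$. The natural choice is
\[ a' \;=\; \sup\bigl(S\cap(-\infty,a]\bigr), \qquad b' \;=\; \inf\bigl(S\cap[b,\infty)\bigr), \]
with the conventions $a' = -\infty$ or $b' = +\infty$ when the corresponding set is empty. By the least upper bound property, $a'$ exists in $X$ whenever the set is nonempty; the dual infimum statement follows from the LUB property by the standard argument applied to the set of lower bounds. The key step is to observe that $a',b' \in S \cup \{\pm\infty\}$: if $a'$ is finite and is attained by the set it bounds, it already lies in $S$; otherwise, every strict predecessor of $a'$ is below some element of $S\cap(-\infty,a]$, so $a'$ is a limit point of $S$, and closedness of $S$ in $X$ forces $a'\in S$. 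The argument for $b'$ is symmetric.

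It remains to check the equality $(a,b)\cap S = \{s\in S : a'<s<b'\}$. The inclusion $\subseteq$ uses $a'\le a$ and $b\le b'$. For $\supseteq$, suppose $s\in S$ with $a'<s<b'$; if $s\le a$ then $s\in S\cap(-\infty,a]$, contradicting maximality of $a'$, and symmetrically $s<b$. Moreover $a'\le a<x<b\le b'$, so $x$ lies in this set, which is a basic order-open set in $S$ (a bounded interval, a half-infinite ray, or all of $S$, according to which of $a',b'$ are $\pm\infty$). Hence $(a,b)\cap S$ is an order-open neighborhood of each of its points, completing the proof.

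The main obstacle is purely organizational: the case analysis around empty endpoint sets (and around $a,b=\pm\infty$), where the basic interval degenerates to a ray, must be carried out consistently so that every possibility produces a genuine basic open set in the order topology on $S$. Once the substitutions $a\mapsto a'$, $b\mapsto b'$ are set up correctly and shown to lie in $S\cup\{\pm\infty\}$, the verification of the identity of the two sets is immediate from the extremal characterization of $a'$ and $b'$.
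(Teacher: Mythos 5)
Your proof is correct. The paper itself gives no proof of this proposition --- it simply cites the MathOverflow reference \cite{MOtopologies} --- so there is nothing to compare against; your argument is the standard one and fills in that citation correctly. The two essential points are both handled properly: the replacement of the external endpoints $a,b$ by $a'=\sup(S\cap(-\infty,a])$ and $b'=\inf(S\cap[b,\infty))$, and the use of closedness of $S$ (via the limit-point argument) to guarantee that these lie in $S\cup\{\pm\infty\}$, which is exactly where the hypotheses of least upper bounds and closedness enter.
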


This proposition allows us to ignore questions of what topology we are using.
One key reason we need the above proposition is to get the following:

\begin{prop}
\label{limitpoints}
Let $X$ be a totally ordered set with the least upper bound property, and let
$S\subseteq X$ be a closed, well-ordered set.  Then the limit points of $S$
within $X$ are the points of $S$ of the form $S(\omega\alpha)$ for ordinals
$\alpha>0$ (see Notation~\ref{index}).
\end{prop}

\begin{proof}
The set $\{S(\omega\alpha): \omega\le\omega\alpha<\type(S)\}$ is the set of
limit points of $S$ within itself under its order topology, and by
Proposition~\ref{topologies}, this coincides with the subspace topology it
inherits as a subset of $X$.  Since $S$ is closed in $X$, this is the same as
the set of limit points of $S$ within $X$.
\end{proof}

We will need to know a few more things about well-orders, closures, and limit
points.

First off, we need to know how indices in the closure relate to indices in the
original set.  We will focus on the case where the original set is discrete, as
will be the case for the sets we consider.  We will make use of the following
fact, a proof of which can be found in \cite{adcwo}, where it is
Proposition~5.5:

\begin{prop}
\label{closuretype}
Let $X$ be a totally ordered set, and let $S$ be a well-ordered subset of order
type $\alpha$.  Then $\overline{S}$ is also well-ordered, and has order type
either $\alpha$ or $\alpha+1$.  If $\alpha=\gamma+k$ where $\gamma$ is a limit
ordinal and $k$ is finite, then $\overline{S}$ has order type $\alpha+1$ if and
only if the initial segment of $S$ of order type $\gamma$ has a supremum in $X$
which is not in $S$.
\end{prop}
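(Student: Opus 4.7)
The plan is to first show $\overline{S}$ is well-ordered, then to compute its order type by counting, for each ordinal $\beta \le \alpha$, how many points of $\overline{S}$ sit at ``position $\beta$''. The foundational observation is that a well-ordered set admits no strictly decreasing sequences, so any limit point $t \in \overline{S} \setminus S$ must be approached from below by elements of $S$; in particular $t = \sup\{s \in S : s < t\}$, with supremum taken in $X$.

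For each $t \in \overline{S}$ define $\beta(t)$ to be the order type of the initial segment $\{s \in S : s < t\}$. For $t = s_\beta \in S$ this is $\beta$, and for $t \in \overline{S} \setminus S$ it is a limit ordinal $\le \alpha$ (it cannot be a successor, since a maximum of $\{s < t\}$ would force $t$ itself to lie in $S$). A short case analysis shows that $t < t'$ in $\overline{S}$ forces $\beta(t) \le \beta(t')$, with equality only when $t \notin S$, $t' \in S$, and $t' = s_{\beta(t)}$ (so $t$ lies just below $t'$ with no element of $S$ in between). Thus $\overline{S}$ embeds order-preservingly into pairs $(\beta,\varepsilon)$ with lexicographic order, proving $\overline{S}$ is well-ordered.

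Setting $n_\beta := |\{t \in \overline{S} : \beta(t) = \beta\}|$ gives $\type(\overline{S}) = \sum_{\beta \le \alpha} n_\beta$ as an ordinal sum. One checks: (i) $n_\beta = 1$ when $\beta < \alpha$ is zero or a successor (contributed by $s_\beta$); (ii) $n_\beta \in \{1,2\}$ when $\beta < \alpha$ is a limit, equal to $2$ exactly when $\sup_{\gamma < \beta} s_\gamma$ exists in $X$ and is strictly less than $s_\beta$ (so it contributes a new limit point); (iii) $n_\alpha \in \{0,1\}$, equal to $1$ iff $\alpha$ is a limit and $\sup S$ exists in $X \setminus S$.

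Evaluating the sum rests on one key lemma, provable by transfinite induction using continuity of ordinal addition at limits: for every limit ordinal $\delta$ (and for $\delta = 0$), $\sum_{\beta < \delta} n_\beta = \delta$. Intuitively, each internal ``$+1$'' arising from an $n_\beta = 2$ at a limit $\beta < \delta$ is absorbed by the subsequent $\omega$-block. Granted this lemma, the finite case of the proposition is immediate; otherwise one writes $\alpha = \gamma + k$ with $\gamma$ a limit and $k$ finite, splits off the $\gamma$-segment, and computes directly. One finds the total equals $\alpha + 1$ precisely when $n_\gamma = 2$ (in the $k \ge 1$ case) or $n_\alpha = 1$ (in the $k = 0$ case); both conditions are equivalent to the initial segment of $S$ of type $\gamma$ having a supremum in $X$ not lying in $S$, and otherwise the total is $\alpha$. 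The main obstacle is the absorption lemma itself: a naive induction stumbles at successor-of-limit stages, and one must arrange the statement carefully so that only the outermost unabsorbed $+1$ can survive.
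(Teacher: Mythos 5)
Your argument is correct, and it is worth noting that the paper itself does not prove this proposition --- it cites it as Proposition~5.5 of the addition-chains paper \cite{adcwo} --- so your proof stands as a self-contained replacement rather than a variant of anything in this text. The two load-bearing points both check out: (a) since $S$ has no infinite strictly decreasing sequence, a point of $\overline{S}\setminus S$ cannot be a limit of $S$ from above, hence equals $\sup\{s\in S: s<t\}$ in $X$ and has $\beta(t)$ a limit ordinal, which is what caps each fiber at two elements and yields the lexicographic embedding proving well-ordering; and (b) the absorption lemma $\sum_{\beta<\delta}n_\beta=\delta$ is correctly restricted to \emph{limit} $\delta$ (plus $\delta=0$), where the induction goes through by splitting $\delta$ as either a limit of limits (continuity of the ordinal sum) or as $\delta'+\omega$ (where the finite $n_{\delta'}$ is absorbed by the following $\omega$ many $1$'s) --- exactly the arrangement needed to avoid the failure at successor-of-limit stages that you flag. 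The final bookkeeping, $\gamma+n_\gamma+(k-1)+n_\alpha$ with $n_\alpha=0$ for successor $\alpha$, reproduces the stated dichotomy and the stated criterion, since ``the supremum of the type-$\gamma$ initial segment exists in $X$ and lies outside $S$'' is precisely the condition $n_\gamma=2$ (for $k\ge1$) or $n_\alpha=1$ (for $k=0$).
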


With this we observe:

\begin{prop}
\label{barshift}
Let $X$ be a totally-ordered set with the least upper bound property, and let
$S\subseteq X$ be well-ordered.  Suppose $\alpha$ is an ordinal
less than the order type of $S$, writing $\alpha=\gamma+k$ with $\gamma$ not a
sucessor and $k$ finite.  Let $T=\{S[\beta]:\beta<\gamma\}$.

Then if $\alpha$ is finite or if $\sup T\notin S$,
\[ S[\alpha] = \overline{S}(\alpha+1). \]
In particular, this always holds if $S$ is discrete.
If these conditions do not hold, then instead $S[\alpha]=\overline{S}(\alpha)$.
\end{prop}

\begin{proof}
Let $\eta = S[\alpha]$, and look at $S_\eta = \{ \zeta\in S: \zeta<\eta \}$ and
at $\overline{S}_\eta = \{ \zeta\in\overline{S}: \zeta<\eta \}$; note that
since subspace topologies commute with closures, $\overline{S}_\eta$ is in fact
the closure of $S_\eta$.  Now, certainly $S_\eta$ has order type $\alpha$.  So
by Proposition~\ref{closuretype}, $\overline{S_\eta}$ has order type either
$\alpha$ or $\alpha+1$, based on whether or not $T$ has a supremum in $S$.

If $\alpha$ is infinite, then $T$ is bounded above by $\eta$, so it has a
supremum in $X$ (since $X$ has the least upper bound property), and this
supremum lies outside $S$ by assumption.  So $\overline{S}_\eta$ has order type
$\alpha+1$, which since $\alpha$ is infinite is equal to $-1+(\alpha+1)$; that
is, $S[\alpha]=\overline{S}(\alpha+1)$.

On the other hand, if $\alpha$ is finite, then $S_\eta$ and $\overline{S}_\eta$
are certainly equal, so both have order type $\alpha$.  Since $\alpha$ is
finite, $\alpha+1=1+\alpha$, and so $S[\alpha]=\overline{S}(\alpha+1)$.

Finally, if the conditions do not hold, that is if is $\alpha$ is infinite and
$\sup T\in S$, then $\overline{S}$ has order type $\alpha$, which is $\alpha$ is
infinite is equal to $-1+\alpha$, and therefore
$S[\alpha]=\overline{S}(\alpha)$.
\end{proof}

More generally, we can relate indices in the closure to indices in the
original set:

\begin{prop}
\label{explainingclosures}
Let $X$ be a totally-ordered set with the least upper bound property, and let
$S\subseteq X$ be well-ordered.  Suppose $\alpha>0$ is an ordinal less than the
order type of $S$; we may also allow $\alpha$ equal to the order type of $S$ if
$S$ is bounded above in $X$.  Write $\alpha=\gamma+k$ with $\gamma$ not a
sucessor and $k$ finite.  Let $T=\{S[\beta]:\beta<\gamma\}$.
Then if $\alpha$ is a limit ordinal, or $\alpha$ is finite, or $\sup T\notin S$,
\[ \overline{S}(\alpha) = \sup_{\beta<\alpha} S[\beta]. \]
In particular, this always holds if $S$ is discrete or if $\alpha$ is a power of
$\omega$.
\end{prop}

\begin{proof}
If $\alpha$ is a successor, then $\sup_{\beta<\alpha} S[\beta]=S[\alpha-1]$, and
by Proposition~\ref{barshift}, $S[\alpha-1]=\overline{S}(\alpha)$.  (Note that
in invoking Proposition~\ref{barshift}, we have used the hypothesis that either
$\alpha$ is finite or $\sup T\notin S$.)

Contrariwise, if $\alpha$ is a limit, then since $\overline{S}$ is closed,
certainly we have $\overline{S}(\alpha) = \sup_{0<\beta<\alpha}
\overline{S}(\beta)$ (since the latter point is in $\overline{S}$ and must
follow directly after all $\overline{S}(\beta)$ for $0<\beta<\alpha$).  Now,
given $\beta<\alpha$, by Proposition~\ref{barshift},
$S[\beta]\in\{\overline{S}(\beta), \overline{S}(\beta+1)\}$.  From this we
can conclude that for any $\delta<\alpha$, we have
$\sup_{k\in\omega}S[\delta+k] = \sup_{k\in\omega}\overline{S}(\delta+k)$.
Since the blocks of $\omega$ have the same limits, the overall limits are also
the same, and therefore
\[ \overline{S}(\alpha) = \sup_{0<\beta<\alpha} \overline{S}(\beta) =
\sup_{\beta<\alpha} S[\beta], \]
as desired.
\end{proof}

We will additionally need to know how the types of limits at a point can be
related to its index.

\begin{notn}
Suppose $S$ is a well-ordered set of order type $\alpha>0$, and with the Cantor
normal form of $\alpha$ being $\alpha=\omega^{\alpha_0}a_0 + \ldots +
\omega^{\alpha_r}a_r$.  Then we will define $\ord \alpha = \alpha_r$.
\end{notn}

It is a well-known fact from order theory that $\omega^{\ord \alpha}$ is the
smallest order type among nonzero final segments of $\alpha$.  We will require
another characterization:

\begin{prop}
\label{ordercof}
Suppose of $S$ is a well-ordered set of order type $\alpha>0$.  Then
$\ord\alpha$ is equal to the largest $\beta$ such that $S$ has a cofinal subset
of order type $\omega^\beta$.
\end{prop}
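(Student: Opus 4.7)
The plan is to prove both directions by leveraging additive indecomposability of ordinals of the form $\omega^\beta$, together with the fact (stated just before the proposition) that the smallest nonzero final segment of $\alpha$ has order type $\omega^{\ord\alpha}$.

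First I would write $\alpha$ in Cantor normal form as $\omega^{\alpha_0}a_0+\cdots+\omega^{\alpha_r}a_r$, so that $\ord\alpha = \alpha_r$. For the easy direction, I would produce an explicit cofinal subset of $S$ of order type $\omega^{\alpha_r}$: namely, take the final segment $F\subseteq S$ consisting of those elements whose index is at least $\omega^{\alpha_0}a_0+\cdots+\omega^{\alpha_r}(a_r-1)$ (with the obvious convention if $a_r=1$). This is a final segment of order type $\omega^{\alpha_r}$, and every nonempty final segment of a well-ordered set is automatically cofinal, so we get a cofinal subset of type $\omega^{\alpha_r}$.

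For the upper bound — the main content of the proposition — I would suppose for contradiction that $T\subseteq S$ is cofinal of order type $\omega^\beta$ with $\beta>\alpha_r$, and derive a contradiction using the same $F$. First, $T\cap F$ is nonempty: picking any $f\in F$ and using cofinality of $T$, some $t\in T$ satisfies $f\le t$, and then $t\in F$ since $F$ is upward-closed. Next, since $F$ is a final segment of $S$ and $T\subseteq S$, the set $T\cap F$ is a final segment of $T$. Here comes the key step: because $\omega^\beta$ is additively indecomposable, every decomposition $\omega^\beta=\delta+\gamma$ with $\gamma>0$ forces $\gamma=\omega^\beta$, so every nonempty final segment of an ordinal of type $\omega^\beta$ must itself have order type $\omega^\beta$. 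Thus $T\cap F$ has order type $\omega^\beta$; but it is a subset of $F$, which has order type $\omega^{\alpha_r}$, forcing $\omega^\beta\le\omega^{\alpha_r}$ and hence $\beta\le\alpha_r$, contradicting our assumption.

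The only nontrivial ingredient here is additive indecomposability of $\omega^\beta$, which is standard; the work lies in observing that a final segment of $T$ inside $F$ is doubly constrained — from below by indecomposability, which makes it as large as all of $T$, and from above by the order type of $F$. I do not expect any significant obstacles; the argument is essentially bookkeeping once the right final segment $F$ is in hand.
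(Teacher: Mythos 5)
Your proof is correct and follows essentially the same route as the paper: both take the final Cantor block $F$ as the cofinal subset of type $\omega^{\ord\alpha}$, and both observe that for a cofinal $T$ of type $\omega^\beta$, the set $T\cap F$ is a nonempty final segment of $T$, hence of type $\omega^\beta$ by additive indecomposability, yet sits inside $F$ of type $\omega^{\ord\alpha}$. The only difference is cosmetic (you frame the second half as a contradiction; the paper concludes $\beta\le\ord\alpha$ directly).
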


While this fact is also well known, we did not find a good reference for it so
we supply a proof of our own.

\begin{proof}
Firstly, $S$ has a cofinal subset of order type $\omega^{\ord \alpha}$, because
the final Cantor block is such a set.  It thus only remains to show that nothing
larger is possible.

Suppose $T$ is cofinal in $S$, and that the order type of $T$ is equal to
$\omega^\beta$.  Let $F$ be the final Cantor block of $S$.  Then the order type
of $T\cap F$ is at most $\omega^{\ord\alpha}$.  But since $F$ is a final segment
of $S$, $T\cap F$ is a final segment of $T$; and since $T$ is cofinal in $S$, it
is a nonempty final segment.  Since the order type of $T$ is a power of
$\omega$, all nonempty final segments of $T$, including $T\cap F$, have that
same order type, $\omega^\beta$.  Therefore, $\beta\le\ord\alpha$.
\end{proof}

The main reason we care about this is the following:

\begin{prop}
\label{order}
Let $X$ be a totally-ordered set with the least upper bound property, and let
$S\subseteq X$ be well-ordered.  Consider a point $\eta\in\overline{S}$, and
write $\eta=\overline{S}(\alpha)$, so $\alpha>0$.

Then $\ord\alpha$ is the largest $\beta$ such that there is a subset of $S$
of order type $\omega^\beta$ with supremum equal to $\eta$.  In particular,
$\ord\alpha = 0$ if and only if $\eta$ is an isolated point of $S$, and
$\ord\alpha > 0 $ if and only if $\eta$ is a limit point of $S$.
\end{prop}

\begin{proof}
First, if $T$ is a subset of $S$ with $\sup T=\eta$ and with order type equal to
$\omega^\beta$, then $T$ is also a subset of $\overline{S}$, so
$\beta\le\ord\alpha$ by Proposition~\ref{ordercof}.

Now, for the reverse, let $U$ be the final Cantor block of
$\overline{S}\cap(-\infty,\eta]$, and let $T=U\cap S$, so $U=\overline{T}$.  So
$U$ has order type $\omega^{\ord\alpha}$.  Thus, by
Proposition~\ref{closuretype}, $T$ must also have order type
$\omega^{\ord\alpha}$ (including if $\ord\alpha=0$, since if $T$ has order type
$0$ rather than $1$, so would $U$).

Also, $\eta\in U=\overline{T}$, so we must have $\sup T=\eta$.  So $T$ is a
subset of $S$ of order type $\omega^{\ord\alpha}$ with $\sup T=\eta$,
proving the claim.
\end{proof}

Finally, one last key fact that we will use about well-orders is the following:

\begin{prop}
\label{cutandpaste}
We have:
\begin{enumerate}
\item If $S$ is a well-ordered set and $S=S_1\cup\ldots\cup S_n$, and $S_1$
through $S_n$ all have order type less than $\omega^k$, then so does $S$.
\item If $S$ is a well-ordered set of order type $\omega^k$ and
$S=S_1\cup\ldots\cup S_n$, then at least one of $S_1$ through $S_n$ also has
order type $\omega^k$.
\end{enumerate}
\end{prop}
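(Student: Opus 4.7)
The plan is to prove part (2) first by induction on $k$, and then deduce part (1) as a quick corollary. Throughout I use the standard fact that a subset $T$ of a well-ordered set $S$ satisfies $\type(T)\le\type(S)$.

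For (2), the base case $k=0$ is immediate: $\type(S)=1$ forces $S$ to be a singleton, so whichever $S_i$ contains it has $\type(S_i)=1=\omega^0$. For the inductive step, assume (2) holds for $k-1$, and let $\type(S)=\omega^k=\omega^{k-1}\cdot\omega$. For each $\ell\in\N$, let $S^{(\ell)}$ denote the initial segment of $S$ of order type $\omega^{k-1}\cdot\ell$, and set $L_\ell=S^{(\ell+1)}\setminus S^{(\ell)}$, so each $L_\ell$ has order type $\omega^{k-1}$ and $S$ is the disjoint union of all the $L_\ell$.

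Each layer decomposes as $L_\ell=\bigcup_i(S_i\cap L_\ell)$, so by the inductive hypothesis there is an index $i(\ell)\in\{1,\ldots,n\}$ with $\type(S_{i(\ell)}\cap L_\ell)=\omega^{k-1}$. Since there are $\omega$ layers and only $n$ indices, pigeonhole produces a single index $i$ with $i(\ell)=i$ for infinitely many $\ell$, say $\ell_1<\ell_2<\cdots$. The pieces $S_i\cap L_{\ell_1}, S_i\cap L_{\ell_2},\ldots$ sit in strictly increasing layers, so every element of $S_i\cap L_{\ell_j}$ is smaller than every element of $S_i\cap L_{\ell_{j+1}}$. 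Their union is therefore an ordered sum of type $\omega^{k-1}+\omega^{k-1}+\cdots=\omega^{k-1}\cdot\omega=\omega^k$, and it is contained in $S_i$, giving $\type(S_i)\ge\omega^k$. Combined with $S_i\subseteq S$, we obtain $\type(S_i)=\omega^k$.

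Part (1) then follows by contraposition: if $\type(S)\ge\omega^k$, choose $S^*\subseteq S$ an initial segment (or all of $S$) of type exactly $\omega^k$, apply (2) to $S^*=\bigcup_i(S_i\cap S^*)$ to obtain some $i$ with $\type(S_i\cap S^*)=\omega^k$, and conclude $\type(S_i)\ge\omega^k$, contradicting any assumption that all $\type(S_i)<\omega^k$. The main obstacle is really just the inductive bookkeeping in (2)---setting up the layers $L_\ell$, verifying they have type $\omega^{k-1}$, and confirming that the union of layer pieces genuinely forms an ordered sum. None of this is deep, since the statement is essentially a repackaging of the additive indecomposability of $\omega^k$, but the layer accounting demands some care.
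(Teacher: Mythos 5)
Your proof is correct. The paper does not argue this proposition directly: it cites Carruth and De Jongh--Parikh for the general machinery (essentially, that $\type(S_1\cup\cdots\cup S_n)$ is bounded by the natural (Hessenberg) sum $\type(S_1)\oplus\cdots\oplus\type(S_n)$, together with the fact that $\omega^k$ is indecomposable for that sum), and then points to Proposition~5.4 of the well-ordering paper where the statement is deduced from those principles. You instead give a self-contained induction on $k$: slicing $S$ into $\omega$ consecutive layers of type $\omega^{k-1}$, applying the inductive hypothesis in each layer, and using pigeonhole to extract infinitely many layers dominated by the same index $i$, whose pieces concatenate into an ordered sum of type $\omega^{k-1}\cdot\omega=\omega^k$ inside $S_i$. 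The layer bookkeeping is sound (each $L_\ell$ does have type $\omega^{k-1}$, the pieces are genuinely stacked in order, and subsets of well-orders have no larger type), and deducing part (1) from part (2) by passing to an initial segment of type exactly $\omega^k$ is clean. What your route buys is elementarity and independence from the theory of natural sums; what the paper's route buys is generality, since the natural-sum bound handles arbitrary ordinals, not just the additively indecomposable powers $\omega^k$ needed here.
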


Proofs of the more general statements this is a special case of can be found in
\cite{carruth} and \cite{wpo}; for a proof of precisely this statement, it is
proved from these more powerful principles as Proposition~5.4 in \cite{paperwo}.

\section{Substantial polynomials}
\label{secsubst}

The key new concept in this paper is that of the substantial low-defect
polynomial.

\begin{defn}
Let $(f,C)$ be a low-defect pair, and let $a$ be the leading coefficient of $f$.
We will say $(f,C)$ is \emph{substantial} if $C = \cpx{a}_\st + \deg f$.  We
will say $f$ is substantial if $\cpx{f} = \cpx{a}_\st + \deg f$.  (Since for a
low-defect pair to be substantial we must have $C=\cpx{f}$ by
Proposition~\ref{ineq}, we will often just talk about substantial
polynomials and ignore the formalism of pairs.)
\end{defn}

We call such polynomials (or pairs) ``substantial'' because, among all
low-defect pairs $(f,C)$ with a fixed value of $\dft(f,C)$, these are the ones
of maximum degree (see also Section~\ref{substintro}).

\begin{prop}
\label{polydft}
Suppose $(f,C)$ is a low-defect pair; then we may write $\dft(f,C)=\eta+k$,
where $\eta$ is a stable defect and $k$ is a whole number.
\end{prop}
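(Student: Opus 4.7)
The plan is to decompose $\dft(f,C)$ by isolating a contribution from the leading coefficient. Let $a$ be the leading coefficient of $f$. By definition $\dft(f,C) = C - 3\log_3 a$, and I would add and subtract $\cpx{a}_\st$ to obtain the identity
\[ \dft(f,C) = (C - \cpx{a}_\st) + (\cpx{a}_\st - 3\log_3 a). \]
The second summand is $\dft_\st(a)$ by part (4) of Proposition~\ref{stoldprops}, and this is a stable defect by construction (it equals $\dft(3^K a)$ for any $K$ making $3^K a$ stable). So I set $\eta := \dft_\st(a)$.

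For the integer piece, I take $k := C - \cpx{a}_\st$ and need to verify $k \in \Nn$. Both $C$ and $\cpx{a}_\st$ are integers, so the only thing to check is $k \ge 0$. This follows directly from Proposition~\ref{ineq}, which gives $\cpx{f} \ge \cpx{a}_\st + \deg f \ge \cpx{a}_\st$; combined with $C \ge \cpx{f}$ (by definition of $\cpx{f}$ as the minimum base complexity), we get $k \ge \deg f \ge 0$.

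There is no real obstacle here: the proposition is essentially a bookkeeping consequence of the definition of $\dft_\st$ together with the inequality in Proposition~\ref{ineq}. The only subtlety worth flagging is ensuring we use $\cpx{a}_\st$ (not $\cpx{a}$) in the split, since that is what makes $\eta$ land in $\Dst$ rather than merely in $\D$, and that is also what is needed to guarantee $k \ge 0$ via Proposition~\ref{ineq}.
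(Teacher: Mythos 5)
Your proof is correct and is essentially the paper's own argument: both decompose $\dft(f,C)$ as $\dft_\st(a)$ plus the integer $C-\cpx{a}_\st$, with nonnegativity coming from Proposition~\ref{ineq}. The paper merely writes that integer as $(C-\cpx{a})+\Delta(a)$, which is the same quantity.
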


\begin{proof}
Since $\dft(f,C) = C-3\log_3 a$ where $C\ge\cpx{a}$ (by Proposition~\ref{ineq}),
we have
\[ \dft(f,C) = (C-\cpx{a}) + \dft(a) = (C-\cpx{a}) + \Delta(a) + \dft_\st(a). \]
\end{proof}

\begin{prop}
\label{cj9prop}
Suppose $(f,C)$ is a low-defect pair, and write $\dft(f,C)=\dft(q)+k$, where $q$
is stable and $k$ is a whole number.  Let $a$ be the leading coefficient of $f$.
Then
\[ \deg f + (C-\deg f-\cpx{a}_\st) = k .\]
In particular, we always have $\deg f\le k$, and $(f,C)$ is substantial if and
only if $\deg f=k$.
\end{prop}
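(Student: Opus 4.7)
The plan is to rewrite $\dft(f,C)$ in a form that decouples the ``integer part'' from the ``stable defect part,'' then use uniqueness of stable defects mod $1$ to identify $k$ explicitly.

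First I would start from the definition $\dft(f,C) = C - 3\log_3 a$ and split it as
\[ \dft(f,C) \;=\; (C - \cpx{a}_\st) \;+\; \bigl(\cpx{a}_\st - 3\log_3 a\bigr) \;=\; (C - \cpx{a}_\st) + \dft_\st(a), \]
using part (4) of Proposition~\ref{stoldprops}. Since $C$ and $\cpx{a}_\st$ are integers, this exhibits $\dft(f,C)$ as ``an integer plus a stable defect.'' Comparing with the given decomposition $\dft(f,C) = \dft(q) + k$ where $q$ is stable, subtracting yields
\[ \dft(q) - \dft_\st(a) \;=\; (C - \cpx{a}_\st) - k \in \Z. \]
Both $\dft(q)$ and $\dft_\st(a)$ are stable defects, and they are congruent mod $1$; by Proposition~\ref{stoldprops}(2) (a stable defect is uniquely determined within its mod-$1$ class), they are therefore equal. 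Consequently $k = C - \cpx{a}_\st$, which is trivially rewritten as $k = \deg f + (C - \deg f - \cpx{a}_\st)$, giving the displayed identity.

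For the ``in particular'' clause, I invoke Proposition~\ref{ineq}: since $(f,C)$ is a low-defect pair we have $C \ge \cpx{f} \ge \cpx{a}_\st + \deg f$, so the quantity $C - \deg f - \cpx{a}_\st$ is a nonnegative integer. This immediately gives $\deg f \le k$, with equality if and only if $C = \cpx{a}_\st + \deg f$, which by definition is exactly the condition that $(f,C)$ be substantial (noting that equality also forces $C = \cpx{f}$, consistent with the convention mentioned after the definition).

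There isn't really a hard step here; the only subtlety is recognizing that the ``right'' decomposition to compare against $\dft(q)+k$ uses $\cpx{a}_\st$ rather than $\cpx{a}$, since we need the stable defect $\dft_\st(a)$ on the nose to invoke the mod-$1$ uniqueness of stable defects. After that, the inequality part is just a reading of Proposition~\ref{ineq}.
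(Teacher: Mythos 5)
Your proof is correct and follows essentially the same route as the paper's: both rewrite $\dft(f,C)=C-3\log_3 a$ so as to compare the stable defect $\dft_\st(a)$ with $\dft(q)$ modulo $1$, invoke part (2) of Proposition~\ref{stoldprops} to conclude $\dft_\st(a)=\dft(q)$ and hence $k=C-\cpx{a}_\st$, and then read off the ``in particular'' clause from Proposition~\ref{ineq} and the definition of substantiality. No issues.
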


\begin{proof}
Since $C-3\log_3 a = \dft(q)+k$, we have that $\dft(a) \equiv \dft(q) \pmod{1}$.
So, by part (2) of Proposition~\ref{stoldprops}, since $q$ is stable, we have
$\dft_\st(a)=\dft(q)$.

Therefore,
\[C-\cpx{a}_\st = C - 3\log_3 a -\dft_\st(a)= k,\]
as required.  The second part is then just Proposition~\ref{ineq}
and the definition of ``substantial''.
\end{proof}

The question then is, how can we determine whether a given low-defect pair is
substantial?  One easy criterion is the following:

\begin{prop}
\label{basecase}
If $\dft(f,C)<\deg f+1$, then $(f,C)$ is substantial.
\end{prop}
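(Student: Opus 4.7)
The plan is to deduce this essentially immediately from Proposition~\ref{cj9prop} together with the non-negativity of defects (Proposition~\ref{oldprops}(1)).

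First, I would apply Proposition~\ref{polydft} to write $\dft(f,C) = \dft(q) + k$ with $q$ a stable number and $k$ a whole number. Proposition~\ref{cj9prop} then tells us that $\deg f \le k$, with equality if and only if $(f,C)$ is substantial. So to finish, it suffices to show that $k \le \deg f$.

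Here the hypothesis $\dft(f,C) < \deg f + 1$ enters. Substituting the decomposition of $\dft(f,C)$ gives
\[ \dft(q) + k < \deg f + 1. \]
Since $\dft(q) \ge 0$ by Proposition~\ref{oldprops}(1), this forces $k < \deg f + 1$. As $k$ and $\deg f$ are both whole numbers, we conclude $k \le \deg f$, so $k = \deg f$, and $(f,C)$ is substantial by Proposition~\ref{cj9prop}.

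There is no serious obstacle here; the content is all packaged inside Proposition~\ref{cj9prop}, and this proposition is essentially an observation about integrality of $k$ combined with the fact that the ``stable part'' $\dft(q)$ of $\dft(f,C)$ contributes non-negatively.
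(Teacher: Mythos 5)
Your proof is correct and is in substance the same as the paper's: the paper directly computes $C-\deg f-\cpx{a}_\st = \dft(f,C)-\dft_\st(a)-\deg f < 1-\dft_\st(a)\le 1$ and concludes this non-negative integer is zero, which is exactly the content you obtain by routing through Proposition~\ref{cj9prop} (whose identity $\deg f + (C-\deg f-\cpx{a}_\st)=k$ packages the same computation) together with $\dft(q)\ge 0$ and integrality.
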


\begin{proof}
If $\dft(f,C)<\deg f+1$, then (letting $a$ be the leading coefficient of $f$)
we have
\[ C-\deg f-\cpx{a}_\st = \dft(f,C) -\dft_\st(a)-\deg f
< 1-\dft_\st(a)\le 1,\] so $C-\deg f -\cpx{a}_\st =0$, that is, $(f,C)$ is
substantial.
\end{proof}

However, substantial polynomials can be much more general than this.  The
easiest way to tell if a low-defect polynomial is substantial is if we know how
it was formed.

\begin{prop}
\label{substrecur}
Let $(f,C)$ be a low-defect pair.  Then:
\begin{enumerate}
\item If $f$ is a constant $n$ and $C=\cpx{n}$, then $(f,C)$ is substantial if
and only if $n$ is stable.
\item If $f = g_1 \otimes g_2$ and $C=D_1+D_2$, and $a$ is the leading
coefficient of $f$ and $b_i$ is the leading coefficient of $g_i$, then $(f,C)$
is substantial if and only if $(g_1, D_1)$ is substantial, $(g_2, D_2)$ is
substantial, $a$ is stable, and $\cpx{a}=\cpx{b_1}+\cpx{b_2}$.
\item If $f = g \otimes x + c$, and $C=D+\cpx{c}$, then $(f,C)$ is substantial
if and only if $(g,D)$ is substantial and $c=1$.
\end{enumerate}
\end{prop}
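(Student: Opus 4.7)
The plan is a case analysis according to which of the three construction rules of Definition~\ref{polydef} was used to build $(f,C)$. In each case I would explicitly read off the leading coefficient $a$ of $f$ and the degree $\deg f$ from the pieces, then test whether the defining equality $C = \cpx{a}_\st + \deg f$ holds, using Proposition~\ref{ineq} to generate the relevant lower bounds and using the ``tightness forces individual tightness'' principle.

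Parts (1) and (3) are quick. For (1), $\deg f = 0$ and $a = n$, so substantiality reads $\cpx{n} = \cpx{n}_\st$, which by Proposition~\ref{stoldprops}(6) is exactly stability of $n$. For (3), the leading coefficient of $g \otimes x + c$ equals that of $g$ and $\deg f = \deg g + 1$, so the condition becomes $D + \cpx{c} = \cpx{b}_\st + \deg g + 1$. Proposition~\ref{ineq} gives $D \ge \cpx{b}_\st + \deg g$, and trivially $\cpx{c} \ge 1$ (with equality iff $c = 1$); the two lower bounds sum to precisely the target, so equality in the sum is equivalent to equality in each piece, i.e.\ $(g,D)$ substantial together with $c = 1$.

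For (2), I would write $a = b_1 b_2$ and $\deg f = \deg g_1 + \deg g_2$, making the substantial condition $D_1 + D_2 = \cpx{b_1 b_2}_\st + \deg g_1 + \deg g_2$. Chaining Proposition~\ref{ineq} on each factor with subadditivity of stable complexity (Proposition~\ref{stoldprops}(9)) gives
\[
D_1 + D_2 \;\ge\; \cpx{b_1}_\st + \cpx{b_2}_\st + \deg g_1 + \deg g_2 \;\ge\; \cpx{b_1 b_2}_\st + \deg g_1 + \deg g_2,
\]
so substantiality of $(f,C)$ is equivalent to equality throughout. That in turn unpacks into three things: each $(g_i, D_i)$ is substantial (the outer $D_i \ge \cpx{g_i}$ and $\cpx{g_i} \ge \cpx{b_i}_\st + \deg g_i$ must both be tight), each $b_i$ is stable (the further squeeze $\cpx{b_i}_\st + \deg g_i \le \cpx{b_i} + \deg g_i \le \cpx{g_i}$ forces $\cpx{b_i} = \cpx{b_i}_\st$), and $\cpx{b_1 b_2}_\st = \cpx{b_1}_\st + \cpx{b_2}_\st$.

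The step I expect to be the main obstacle, and the only genuinely delicate one, is translating the last two of these equivalent conditions into the form stated in the proposition: ``$a$ stable and $\cpx{a} = \cpx{b_1} + \cpx{b_2}$.'' In the forward direction, stability of $b_1, b_2$ together with $\cpx{a}_\st = \cpx{b_1}_\st + \cpx{b_2}_\st = \cpx{b_1} + \cpx{b_2}$ combined with the trivial bound $\cpx{a} \le \cpx{b_1} + \cpx{b_2}$ and $\cpx{a} \ge \cpx{a}_\st$ produces a squeeze $\cpx{a}_\st \ge \cpx{a} \ge \cpx{a}_\st$, delivering both stability of $a$ and the additive identity. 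In the reverse direction, starting from $a$ stable and $\cpx{a} = \cpx{b_1} + \cpx{b_2}$, Proposition~\ref{goodfac}(1) immediately yields that $b_1$ and $b_2$ are stable, whence $\cpx{a}_\st = \cpx{a} = \cpx{b_1} + \cpx{b_2} = \cpx{b_1}_\st + \cpx{b_2}_\st$. This dictionary between the ``stable complexity'' form that falls out of the definition and the ``honest complexity with a separate stability assertion'' form stated in the proposition is where Proposition~\ref{goodfac} does its nontrivial work; everything else is bookkeeping around Proposition~\ref{ineq}.
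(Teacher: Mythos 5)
Your proposal is correct and follows essentially the same route as the paper's proof: read off the leading coefficient and degree in each case, stack the lower bounds from Proposition~\ref{ineq} with subadditivity of stable complexity, and argue that equality in the sum forces equality in each piece. You even supply the detail the paper waves off as ``straightforward,'' namely the converse of part (2) via Proposition~\ref{goodfac}, and your squeeze arguments for the stability of $a$ and the $b_i$ match what the paper leaves implicit.
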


It is worth remembering here (per Theorem~\ref{computk}) that it can be computed
algorithmically whether a given number is stable or not.

\begin{proof}
For part (1), the leading coefficient of $f$ is $n$, and $\deg f=0$, so $(f,C)$
is substantial if and only if $C=\cpx{n}_\st+\deg f=\cpx{n}_\st$; since
$C=\cpx{n}$, this holds if and only if $n$ is stable.

For part (2), $(f, C)$ is substantial if and only if
\[
D_1 + D_2 = \cpx{b_1 b_2}_\st + \deg g_1 + \deg g_2.
\]
By part (9) of Proposition~\ref{stoldprops}, we know that $\cpx{b_1 b_2}_\st \le
\cpx{b_1}_\st + \cpx{b_2}_\st$, so in particular for $(f,C)$ to be substantial
we must have
\[
D_1 + D_2 \le \cpx{b_1}_\st + \cpx{b_2}_\st + \deg g_1 + \deg g_2.
\]
But we know by Proposition~\ref{ineq} that $D_i \ge \cpx{b_i}_\st + \deg g_i$,
so the only way this can happen is if $D_i = \cpx{b_i}_\st + \deg g_i$, and both
sides of the equation are in fact equal.  So we conclude that each $(g_i, D_i)$
is substantial, and moreover that $\cpx{a}_\st = \cpx{b_1}_\st + \cpx{b_2}_\st$.
Since $(f,C)$, $(g_1,D_1)$, and $(g_2, D_2)$ are all substantial, we know that
$a$, $b_1$, and $b_2$ are all stable, and so we conclude that $\cpx{a} =
\cpx{b_1} + \cpx{b_2}$.

This shows that if $(f,C)$ is substantial, then the conditions of part (2) are
satisfied; and checking the converse is straightforward.

For part (3), let $a$ be the leading coefficient of $g$, which is also the
leading coefficient of $f$; then $(f, C)$ is substantial if and only if
\[
D + \cpx{c} = \cpx{a}_\st + \deg g + 1.
\]
We know that $D\ge \cpx{a}_\st + \deg g$, and that $\cpx{c}=1$, so this holds if
and only if $D = \cpx{a}_\st + \deg g$ (i.e., $(g,D)$ is substantial) and
$\cpx{c}=1$ (i.e., $c=1$).
\end{proof}

So, for instance, we can conclude:

\begin{prop}
\label{exists}
If $\eta$ is a stable defect (say $\eta=\dft(q)$ for $q$ stable) and $k$ a whole
number, then there is a substantial polynomial $f$ with leading coefficient $q$
such that $\dft(f)=\eta+k$ and $\deg f=k$.
\end{prop}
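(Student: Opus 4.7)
The plan is to construct $f$ explicitly by iterated application of rule (3) from the definition of low-defect polynomials, starting with the constant $q$. Concretely, set $f_0 = q$ and, for $0 \le i < k$, define $f_{i+1} = f_i \otimes x_{i+1} + 1$; then take $f = f_k$. This is visibly a low-defect polynomial of degree $k$ in the variables $x_1,\ldots,x_k$.

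The point of this construction is that Proposition~\ref{substrecur} lets us verify substantiality at each step without any real calculation. By assumption $q$ is stable, so part (1) of that proposition tells us $(f_0,\cpx{q})$ is substantial. At each subsequent step, part (3) applied with $c=1$ shows that if $(f_i, \cpx{f_i})$ is substantial, then so is $(f_{i+1}, \cpx{f_i}+1)$; and since any substantial pair $(g,C)$ satisfies $C=\cpx{g}$ (as noted after the definition), this gives $\cpx{f_{i+1}}=\cpx{f_i}+1$. Inducting, $f = f_k$ is substantial with $\cpx{f} = \cpx{q} + k$.

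It remains to check the invariants on degree, leading coefficient, and defect. By Proposition~\ref{polystruct} the leading coefficient of $g \otimes x + 1$ equals that of $g$ (the leading term gets multiplied by a new variable and the constant $+1$ does not affect it), and the degree is raised by exactly one; hence by induction $f$ has leading coefficient $q$ and degree $k$. The defect is then immediate from the formula $\dft(f) = \cpx{f} - 3\log_3 q = \dft(q) + k = \eta + k$, using the hypothesis $\eta = \dft(q)$.

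There is no serious obstacle here: the whole proof is a direct application of the recursive characterization of substantial pairs together with the fact that the operation $g \mapsto g \otimes x + 1$ interacts with degrees and leading coefficients in the most naive way. The content of the proposition is really the existence statement itself, namely that one can always build a substantial polynomial realizing any stable-defect-plus-whole-number pair with the leading coefficient of one's choice.
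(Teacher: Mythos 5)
Your proof is correct and is essentially the paper's own argument: the paper likewise builds $(((qx_1+1)x_2+1)\cdots)x_k+1$ and cites Proposition~\ref{substrecur} repeatedly, merely leaving the routine checks on degree, leading coefficient, and defect implicit where you spell them out.
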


\begin{proof}
Since $q$ is stable, by repeatedly applying Proposition~\ref{substrecur}, the
polynomial $(((qx_1+1)x_2+1)\cdots)x_k+1$ is substantial.
\end{proof}

It makes sense to define substantiality for low-defect trees and expressions,
too:

\begin{defn}
If $E$ is a low-defect expression, then we define $E$ to be substantial if
$(f,\cpx{E})$ is substantial, where $f$ is the low-defect polynomial obtained by
evaluating $E$.  Similarly, if $T$ is a low-defect tree, we define $T$ to be
substantial if $(f,\cpx{T})$ is substantial, where $f$ is the low-defect
polynomial arising from $T$.
\end{defn}

We will not actually require these latter notions for our proof, but they are
useful to keep in mind in order to get a picture of what substantial polynomials
look like and how we can form them.  Obviously the analogues of
Proposition~\ref{substrecur} will work just as well for expressions or trees; we
will not repeat the proof here.  Note that for trees, we can to some extent read
substantiality directly off the tree:

\begin{prop}
\label{substree}
Let $T$ be a low-defect tree with vertex set $V$, and let $N$ be the product of
the vertex labels.  Then $T$ is substantial if and only if all edge labels are
$1$, no leaf vertex labels are equal to $1$, $N$ is stable, and \[ \cpx{N} =
\sum_{\substack{v\in V \\ w(v)> 1}} \cpx{w(v)}, \]
where $w(v)$ denotes the label of $v$.
\end{prop}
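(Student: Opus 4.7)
The plan is to combine the concrete complexity formula for low-defect trees in Proposition~\ref{treecpx} with Proposition~\ref{treelead}, which identifies $N$ with the leading coefficient of $f$ and $|E|$ with $\deg f$. With these in hand, substantiality of $T$ reduces to the single equation $\cpx{T} = \cpx{N}_\st + |E|$. The strategy is to bound each term in the Proposition~\ref{treecpx} formula from below by a quantity matching the target, and then read off the conditions under which all the bounds are simultaneously tight.

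First I would bound the edge sum: since $\cpx{m}\ge 1$ for every positive integer $m$, we have $\sum_e \cpx{w(e)} \ge |E|$, with equality iff every edge label is $1$. Next I would rewrite the vertex part of the formula as
\[\sum_{v\text{ leaf}}\cpx{w(v)} + \sum_{\substack{v\text{ non-leaf}\\ w(v)>1}}\cpx{w(v)} = \sum_{w(v)>1}\cpx{w(v)} + \#\{v\text{ leaf}:w(v)=1\},\]
using that leaves with label $1$ contribute $\cpx{1}=1$ to the left-hand side. Finally, submultiplicativity of stable complexity (Proposition~\ref{stoldprops}(9)) together with $\cpx{1}_\st=0$ yields $\cpx{N}_\st \le \sum_{w(v)>1}\cpx{w(v)}_\st \le \sum_{w(v)>1}\cpx{w(v)}$. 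Chaining these bounds gives
\[\cpx{T} \ge |E| + \cpx{N}_\st + \#\{v\text{ leaf}:w(v)=1\},\]
with equality iff all edge labels are $1$, no leaf has label $1$, every vertex label exceeding $1$ is stable, and $\cpx{N}_\st = \sum_{w(v)>1}\cpx{w(v)}_\st$.

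For the forward direction, these equality conditions, combined with Proposition~\ref{goodfac}(2) applied to the factorization $N=\prod_{w(v)>1} w(v)$ (vertices labeled $1$ don't affect the product), force $N$ itself to be stable; hence $\cpx{N}=\cpx{N}_\st = \sum_{w(v)>1}\cpx{w(v)}$, which is precisely the condition in the proposition. Conversely, given the stated conditions, $N$ stable gives $\cpx{N}_\st=\cpx{N}$, and then Proposition~\ref{goodfac}(1) applied to $\cpx{N} = \sum_{w(v)>1}\cpx{w(v)}$ forces each such $w(v)$ to be stable, so that $\cpx{w(v)}_\st=\cpx{w(v)}$ throughout; all the intermediate bounds tighten simultaneously and $\cpx{T} = \cpx{N}_\st + |E|$.

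I expect the main obstacle to be almost entirely bookkeeping --- in particular, making sure the asymmetric role of label-$1$ vertices (they count in $\cpx{T}$ when they are leaves but not when they are internal) is tracked carefully, and that the two directions of Proposition~\ref{goodfac} are invoked in the right order. No genuinely new idea beyond the tree complexity formula and the stable-factorization principles from earlier papers is needed; the argument parallels the recursive criterion of Proposition~\ref{substrecur} but avoids an induction by reading everything off the fixed tree at once.
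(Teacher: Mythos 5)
Your argument is correct and follows essentially the same route as the paper's proof: reduce via Proposition~\ref{treelead} to the equation $\cpx{T}=\cpx{N}_\st+|E|$, expand $\cpx{T}$ with Proposition~\ref{treecpx}, and read off the equality conditions term by term. The only difference is that you chain $\cpx{N}_\st\le\sum\cpx{w(v)}_\st\le\sum\cpx{w(v)}$ and then need both directions of Proposition~\ref{goodfac} to pass between stability of $N$ and of its factors, whereas the paper chains $\cpx{N}_\st\le\cpx{N}\le\sum\cpx{w(v)}$ directly, which makes $N$ stable appear as an equality condition immediately and renders Proposition~\ref{goodfac} unnecessary.
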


\begin{proof}
By Proposition~\ref{treelead}, the leading coefficent of the polynomial
corresponding to $T$ is equal to $N$, and its degree is equal to the number of
edges $|E|$
(i.e., one less than the number of vertices).  So $T$ is substantial if and only
if $\cpx{T}=\cpx{N}_\st + |E|$.  Apply Proposition~\ref{treecpx}, and note
firstly
that $|E|\le\sum \cpx{w(e)}$, with equality if and only if $\cpx{w(e)}=1$,
i.e.~$w(e)=1$, for all edges $E$;
and secondly that since \[ N= \prod_{\substack{v\in V \\ w(v)> 1}} w(v), \]
we also have that $\cpx{N}$ is at most the rest of the sum, with equality if
and only if
 \[ \cpx{N} = \sum_{\substack{v\in V \\ w(v)> 1}} \cpx{w(v)}; \]
and none of the leaf labels are $1$; and of course $\cpx{N}_\st\le\cpx{N}$ with
equality if and only if $N$ is stable.
\end{proof}

Again, this is useful for getting a concrete idea of what substantial
polynomials look like.  We will be particularly concerned with the degree-$1$
case:

\begin{prop}
\label{1subst}
A low-defect polynomial of degree $1$ is substantial if and only if it can be
written as $ax+1$, for $a$ stable, or $b(ax+1)$, where $ab$ is stable and
$\cpx{ab}=\cpx{a}+\cpx{b}$.
\end{prop}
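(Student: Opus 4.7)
The plan is to apply Proposition~\ref{substrecur} iteratively to the construction witnessing the complexity of a degree-$1$ substantial polynomial.

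For the ``if'' direction, suppose first that $f=ax+1$ with $a$ stable. Then the constant pair $(a,\cpx{a})$ is substantial by part (1) of Proposition~\ref{substrecur}, so applying rule (3) with $c=1$ gives, by part (3), a substantial low-defect pair $(ax+1, \cpx{a}+1)$. Proposition~\ref{ineq} forces $\cpx{f}\ge\cpx{a}+1$, so this pair equals $(f,\cpx{f})$ and $f$ is substantial. For $f=b(ax+1)$ with $ab$ stable and $\cpx{ab}=\cpx{a}+\cpx{b}$, part (1) of Proposition~\ref{goodfac} forces $a$ and $b$ to be stable, so $(ax+1,\cpx{a}+1)$ and $(b,\cpx{b})$ are both substantial; combining them via rule (2), part (2) of Proposition~\ref{substrecur} applies (its hypotheses being exactly $ab$ stable and $\cpx{ab}=\cpx{a}+\cpx{b}$) to give a substantial pair of complexity $\cpx{a}+\cpx{b}+1=\cpx{ab}+1$, which equals $\cpx{f}$ by Proposition~\ref{ineq}.

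For the converse, suppose $(f,\cpx{f})$ is substantial of degree $1$, and fix any construction of $f$ with complexity $\cpx{f}$. Because rule (3) is the only rule that increases the degree, and because rule (2) combines polynomials in disjoint variable sets so that degrees add, the construction applies rule (3) exactly once; call this the critical step. Before it we build a constant $n$; at it we pass from $n$ to $nx+c$; and every subsequent rule (2) application involving the $x$-containing polynomial must, to stay within degree $1$, combine it with a constant. Collapsing all those constant factors by associativity and commutativity of $\otimes$ yields $f=B(nx+c)$ for some positive integers $B$ and $n$. Part (3) of Proposition~\ref{substrecur} applied at the critical step forces $c=1$ and $n$ stable; iterating part (2) across the remaining multiplications by constants forces $Bn$ stable and $\cpx{Bn}=\cpx{B}+\cpx{n}$. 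Setting $a=n$ and $b=B$ gives the form $b(ax+1)$ with $ab$ stable and $\cpx{ab}=\cpx{a}+\cpx{b}$; when $B=1$ this reduces to $ax+1$ with $a$ stable.

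The main obstacle is the combinatorial bookkeeping after the critical step: rule-(2) applications involving the $x$-containing polynomial may be interleaved with purely-constant rule-(2) operations, and the constants multiplied in may themselves be composite with their own sub-constructions. Ensuring that these all collapse cleanly to the single additivity statement $\cpx{ab}=\cpx{a}+\cpx{b}$ together with stability of $ab$ requires iterating part (2) of Proposition~\ref{substrecur} carefully and using Proposition~\ref{goodfac} to consolidate the factorizations into a single split $\alpha=ab$.
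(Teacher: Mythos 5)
Your proof is correct and takes the same route as the paper, which simply declares the proposition ``immediate from Proposition~\ref{substrecur} or Proposition~\ref{substree}''; you have spelled out the inductive bookkeeping (exactly one rule-(3) step for degree $1$, all other steps multiplying by constants, consolidation via subadditivity of $\cpx{\cdot}$ and Proposition~\ref{goodfac}) that the paper leaves implicit. No gaps.
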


\begin{proof}
This is immediate from Proposition~\ref{substrecur} or
Proposition~\ref{substree}.
\end{proof}

\section{Proving the main theorems}
\label{secproof}

We now begin to prove the main theorem.  We start by proving a restricted
version of Theorem~\ref{closure}, where we show that $\clD \subseteq \D+\Nn$,
together with its analogue for when we split into congruence classes.

\begin{prop}
\label{subclosure}
We have:
\begin{enumerate}
\item $\clD \subseteq \Dst + \Nn$.
\item Say $q$ is a stable number, $k\in \Nn$, and let $\eta=\delta(q)+k$.  If
$\eta\in\clDa{u}$, then $u \equiv \cpx{q} + k \pmod{3}$.
\end{enumerate}
\end{prop}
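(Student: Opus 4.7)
The two parts should be proved together via a common covering-theorem argument.  For part~(1) assume $\eta \in \clD$ is not already in $\Dst$ (the latter case being trivial), so $\eta$ is a limit point.  Fix $s>\eta$ and apply Theorem~\ref{covering} to obtain a finite good covering $\sS_s$.  Pick stable numbers $q_i$ with $\dft(q_i)\to\eta$, and replace each by its leader $m_i$ using Proposition~\ref{arbr}, so $\dft(m_i)=\dft(q_i)$ and $m_i\in\overline{B}_s$ for large $i$; each such $m_i$ is efficiently $3$-represented by some pair in $\sS_s$, and by pigeonhole on the finite set $\sS_s$ we may assume a single $(f,C)$ works for all $i$, yielding $\dft_{f,C}(\ell_{i,1},\ldots,\ell_{i,d})\to\eta$.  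For part~(2) the same construction applies starting from numbers $n_i$ with $\cpx{n_i}\equiv u\pmod 3$; efficient representation gives $\cpx{m_i}=C+3\sum_j\ell_{i,j}$, forcing $u\equiv C\pmod 3$.

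Next, by an iterated Bolzano--Weierstrass argument, pass to a subsequence so that for each coordinate $j$, either $\ell_{i,j}$ is eventually constant at some $\ell^*_j$ or $\ell_{i,j}\to\infty$; let $J$ be the set of $j$'s of the latter type.  If $J$ is empty the limit $\eta$ lies in the image of $\dft_{f,C}$, and is therefore a stable defect; if $J=\{1,\ldots,d\}$ the limit equals $\dft(f,C)$, which Proposition~\ref{polydft} shows is in $\Dst+\Nn$.  In the remaining mixed case, a direct computation using multilinearity of $f$ gives
\[\eta \;=\; C + 3\sum_{j\in J^c}\ell^*_j \;-\; 3\log_3 L,\]
where $L = f_J\bigl(3^{\ell^*_j} : j\in J^c\bigr)$ is the value, at the fixed arguments, of the coefficient of $\prod_{j\in J}x_j$ in $f$ regarded as a polynomial in the remaining variables.

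The key technical step for part~(1) is the inequality $\cpx{L}_\st \le C + 3\sum_{j\in J^c}\ell^*_j$, which makes $\eta-\dft_\st(L) = C + 3\sum_{j\in J^c}\ell^*_j - \cpx{L}_\st$ a nonnegative integer and so exhibits $\eta$ as an element of $\Dst+\Nn$.  This is where I expect the main work.  My plan is induction on the structure of $(f,C)$ following the three rules of Definition~\ref{polydef}: for $f$ a constant the bound is immediate; for $f = f_1\otimes f_2$ with $C = C_1+C_2$ the integer $L$ factors as $L_1L_2$ (with $J$ partitioning accordingly), and one combines the inductive bounds on the $\cpx{L_i}$ using $\cpx{L_1L_2}\le\cpx{L_1}+\cpx{L_2}$; for $f = g\otimes x_r + c$ with $C = D+\cpx{c}$, one handles the sub-cases $r\in J$ (where $L$ equals the analogous quantity $L_g$ for $g$) and $r\notin J$ (where $L = 3^{\ell^*_r}\cdot L_g$, and the additional $3\ell^*_r$ is absorbed using $\cpx{3^k n}\le\cpx{n}+3k$ together with the slack $\cpx{c}\ge 1$).

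For part~(2) it remains to show $C\equiv\cpx{q}+k\pmod 3$.  In the all-to-$\infty$ case, $\eta = \dft(f,C) = \dft_\st(a) + (C-\cpx{a}_\st)$ by Proposition~\ref{polydft}; matching this against $\eta = \dft(q)+k$, uniqueness of the stable defect in each equivalence class mod~$1$ (Proposition~\ref{stoldprops}(2)) forces $\dft_\st(a)=\dft(q)$, hence $\cpx{a}_\st\equiv\cpx{q}\pmod 3$ by Proposition~\ref{stoldprops}(8), and $k = C-\cpx{a}_\st$ gives the congruence.  In the mixed case, the relation $\dft(L)\equiv\dft(q)\pmod 1$ (read off from the displayed formula for $\eta$) together with Proposition~\ref{oldprops}(4) forces $L = q\cdot 3^j$ for some integer $j$; then $\dft_\st(L) = \dft(q)$ and again $\cpx{L}_\st\equiv\cpx{q}\pmod 3$, and the equation $k = C + 3\sum_{j\in J^c}\ell^*_j - \cpx{L}_\st$ read mod~$3$ yields $C\equiv\cpx{q}+k\pmod 3$ as desired.
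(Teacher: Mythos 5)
Your proposal is correct in outline, but it takes a genuinely different and more laborious route than the paper. The paper's trick is to take the good covering at level exactly $\eta$, i.e.\ a covering $\sS$ of $\overline{B}_\eta$, so that every pair satisfies $\dft(f,C)\le\eta$; since each pair's image under $\dft_{f,C}$ has supremum $\dft(f,C)$ (Proposition~\ref{dftbd}) and these finitely many suprema must reach $\eta=\sup(\Dst\cap[0,\eta])$, some pair has $\dft(f,C)=\eta$ exactly, and Propositions~\ref{ineq} and \ref{polydft} immediately place $\dft(f,C)$ in $\Dst+\Nn$. No analysis of partial limits is ever needed. Because you instead take $s>\eta$, pairs with $\dft(f,C)>\eta$ can occur, and you are forced into the Bolzano--Weierstrass trichotomy and the ``partial leading coefficient'' lemma $\cpx{L}\le C+3\sum_{j\in J^c}\ell^*_j$. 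That lemma is true, your three-case structural induction does go through (I checked the $\otimes$ case and both subcases of $g\otimes x_r+c$), and it buys you a finer description of which element of $\Dst+\Nn$ the limit point is --- but it is machinery the paper's choice of covering level makes unnecessary. For part~(2) the paper also argues differently: it compares $(f,C)$ against the reference pair $\bigl((((qx_1+1)x_2)\cdots)x_k+1,\ \cpx{q}+k\bigr)$, which has the same defect $\eta$, and invokes Proposition~\ref{fdft} to get $C\equiv\cpx{q}+k\pmod 3$ in one step; your derivation via $\dft_\st(a)=\dft(q)$ (resp.\ $L=q3^j$) is a valid substitute.

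Two small corrections. First, in the all-coordinates-constant case, a value of $\dft_{f,C}$ is \emph{not} in general a stable defect --- it exceeds $\dft_\st$ of the represented number by a nonnegative integer --- so the phrase ``is therefore a stable defect'' is wrong as stated; but only membership in $\Dst+\Nn$ is needed, and in fact that case is vacuous under your setup, since there $\dft(m_i)=\dft(q_i)$ is eventually constant equal to $\eta$, contradicting $\eta\notin\Dst$. Second, if you keep your route, the inequality $\cpx{L}\le C+3\sum_{j\in J^c}\ell^*_j$ should be stated and proved as a standalone lemma (universally quantified over $J$ and the fixed exponents) so the induction has the right hypothesis; as written it is only a plan.
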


\begin{proof}
Suppose $\eta\in \clD$.  Now, we know that the set $\Dst$ is well-ordered, so
this means that $\eta\in \overline{\Dst\cap[0,\eta]}$.  That is to say,
\[\eta = \sup (\Dst\cap[0,\eta]).\]

Choose a good covering
$\sS$ of $\overline{B}_\eta$ (see Theorem~\ref{covering}).  Now, we know that
\[\Dst\cap[0,\eta]\subseteq
\bigcup_{(f,C)\in\sS}
\{\dft(f(3^{k_1},\ldots,3^{k_d})): k_i\in\Nn,\ d=\deg f\}.
\]
So, $\sup(\Dst\cap[0,\eta])$ is at most the maximum of the suprema of these
individual sets.  But, applying Proposition~\ref{dftbd}, this means that
\[\eta = \sup (\Dst\cap[0,\eta]) \le \max_{(f,C)\in\sS} \dft(f,C)\le \eta,\]
where the last inequality comes from the definition of $\sS$ (see
Theorem~\ref{covering}, part (2)).

Therefore,
\[\eta = \max_{(f,C)\in\sS} \dft(f,C);\]
or, in other words, there is some $(f,C)\in\sS$ such that $\dft(f,C)=\eta$.

So, if $b$ is the leading coefficient of this $f$, then by
Proposition~\ref{ineq}, we have $C=\cpx{b}+\ell$ for some $\ell\in\Nn$, and so
\[ \eta = C - 3\log_3 b = \ell + \dft(b) = \dft_\st(b) + k, \]
where $k=\ell+\Delta(b)$.  This proves part (1).

For the second part, define
\[\zeta = \max (\{ \dft(f,C): (f,C)\in \sS,\ \dft(f,C)<\eta \} \cup
\{0\}).\]
We will show that if $\dft(n)\in(\zeta,\eta]$, then
$\cpx{n}\equiv\cpx{q}+k\pmod{3}$.  Since $\D$ is well-ordered, there is also
some interval $(\eta,\theta)$ that is free of defects not equal to $\eta$, so
this will show that $\eta\notin\clDa{u}$ for $u\not\equiv\cpx{q}+k\pmod{3}$.

So, suppose $\dft(n)\in(\zeta,\eta]$.  Then $n$ is efficiently $3$-represented
by some $(f,C)\in\sS$; and since $\zeta<\dft(n)\le\dft(f,C)$, this means we must
have $\dft(f,C)=\eta$.  Note that since $n$ is efficiently $3$-represented by
$(f,C)$, this means we have $\cpx{n}\equiv C\pmod{3}$.

For comparison, let us consider the polynomial
\[ g(x_1,\ldots,x_k) = (((qx_1+1)x_2)\ldots)x_k+1 \]
and the low-defect pair $(g,\cpx{q}+k)$; note that $\dft(g,\cpx{q}+k)=\eta$.
Since $\dft(f,C)=\dft(g,\cpx{q}+k)$, we conclude by
Proposition~\ref{fdft} that $C\equiv\cpx{q}+k\pmod{3}$.

So, if $\dft(n)\in(\zeta,\eta]$, then
\[ \cpx{n}\equiv C\equiv\cpx{q}+k\pmod{3}. \]
As noted above, this proves the second part of the theorem.
\end{proof}

Now we ask, given a point in $\clD\subseteq \D+\Nn$, what can we say about the
type of limit leading up to it?  That is to say, the points leading up to it
will form an $\omega^m$ for some $m$; the question is, what is $m$?  We will
answer this question shortly, but right now can only put an upper bound on it.

\begin{prop}
\label{step3}
Let $q$ be a stable number, let $\eta=\delta(q)+k$ for some $k\in \Nn$, and
suppose $\eta = \clD(\alpha)$.  Then $\ord \alpha\le k$.  Moreover, if
$\eta=\clDa{\cpx{q}+k}(\beta)$, then $\ord\beta\le k$.
\end{prop}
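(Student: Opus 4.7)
The plan is to apply Proposition~\ref{order} to translate the first claim into the statement that $\Dst$ has no subset of order type $\omega^{k+1}$ with supremum $\eta$ (and the second claim analogously with $\Da{\cpx{q}+k}$ in place of $\Dst$). I will argue this by contradiction, using a finite good covering to decompose any prospective clustering at $\eta$ into finitely many pieces of controlled degree.

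First I would invoke Theorem~\ref{covering} to fix a good covering $\sS$ of $\overline{B}_\eta$, and set
\[
\zeta=\max\bigl(\{\dft(f,C):(f,C)\in\sS,\ \dft(f,C)<\eta\}\cup\{0\}\bigr),
\]
so that $\zeta<\eta$. The key observation is that every stable defect $\theta\in(\zeta,\eta]$ must arise from some $(f,C)\in\sS$ with $\dft(f,C)=\eta$ exactly: unfolding $\theta$ via Proposition~\ref{arbr} gives a leader $m\in\overline{B}_\eta$ with $\dft(m)=\theta$, which by the covering property is efficiently $3$-represented by some $(f,C)\in\sS$; the pair must satisfy $\dft(f,C)\ge\theta>\zeta$, hence $\dft(f,C)=\eta$. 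Applying Proposition~\ref{cj9prop} to the expression $\eta=\dft(q)+k$, any such $(f,C)$ satisfies $\deg f\le k$.

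Next I would suppose toward contradiction that $T\subseteq\Dst$ has order type $\omega^{k+1}$ and $\sup T=\eta$. Because $\gamma+\omega^{k+1}=\omega^{k+1}$ for every $\gamma<\omega^{k+1}$, the tail $T\cap(\zeta,\eta]$ still has order type $\omega^{k+1}$. By the previous paragraph this tail is contained in the finite union, over those $(f,C)\in\sS$ with $\dft(f,C)=\eta$, of the sets of defects $3$-represented by $f$; by Proposition~\ref{indivtype} each such set has order type less than $\omega^{\deg f+1}\le\omega^{k+1}$. A finite union of well-ordered sets each of order type less than $\omega^{k+1}$ has order type less than $\omega^{k+1}$ (Proposition~\ref{cutandpaste}(1)), contradicting the assumed order type of the tail. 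The second half of the proposition then drops out for free: any subset of $\Da{\cpx{q}+k}\subseteq\Dst$ witnessing $\ord\beta\ge k+1$ would also be a subset of $\Dst$ of the kind just excluded.

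The main obstacle, though a mild one given the machinery on hand, is the bookkeeping in the middle step: verifying that every stable defect in $(\zeta,\eta]$ is genuinely represented by some $(f,C)\in\sS$ attaining $\dft(f,C)=\eta$ (not merely bounded by it), so that Proposition~\ref{cj9prop} applies and yields the degree bound $\deg f\le k$. Once that is pinned down, the remaining steps are a clean combination of ordinal absorption, the degree-controlled bound in Proposition~\ref{indivtype}, and the union principle of Proposition~\ref{cutandpaste}.
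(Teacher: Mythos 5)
Your proposal is correct and takes essentially the same route as the paper's own proof: fix a good covering of $\overline{B}_\eta$, define the threshold $\zeta$, force $\dft(f,C)=\eta$ for any pair representing a defect in $(\zeta,\eta]$ so that Proposition~\ref{cj9prop} gives $\deg f\le k$, and then combine Propositions~\ref{indivtype}, \ref{cutandpaste}, and \ref{order} to conclude. The only cosmetic difference is that the paper bounds the order type of all of $\D\cap(\zeta,\eta]$ via the images of the functions $\dft_{f,C}$ (part (1) of Proposition~\ref{indivtype}), whereas you pass to leaders via Proposition~\ref{arbr} and use the bound on defects of $3$-represented numbers (part (2)); both are valid.
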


\begin{proof}
We prove both parts simultaneously, using Proposition~\ref{order}.

Let $\sS$ be a good covering of $\overline{B}_\eta$, and let
\[\zeta = \max (\{ \dft(f,C): (f,C)\in \sS,\ \dft(f,C)<\eta \} \cup
\{0\}).\]

Let us examine $\D\cap(\zeta,\eta]$.  If $\dft(n)\in(\zeta,\eta]$, then $n$ is
efficiently $3$-represented by $(\hat{f},C)$ for some $(f,C)\in \sS$.  Since
$\dft(n)>\zeta$, we must have $\dft(f,C)\ge \dft(n)>\zeta$ and so
$\dft(f,C)=\eta$.  Also, since the $3$-representation is efficient, we have that
$\dft(n)$ is a value of $\dft_{f,C}$.

Therefore, $\D\cap(\zeta,\eta]$ is contained in the union of the images of
finitely many $\dft_{f,C}$, with each $(f,C)$ having $\dft(f,C)=\eta$.  But by
Proposition~\ref{cj9prop}, this means that we have $\deg f\le k$ for each of
these, and so, by Proposition~\ref{indivtype} and \ref{cutandpaste}, the order
type of $\D\cap(\zeta,\eta]$ is strictly less than $\omega^{k+1}$.

So, if $S$ is a subset of $\Dst$ or $\Da{u}$ with supremum equal to $\eta$ and
order type $\omega^\gamma$, we must have $\gamma\le k$, as otherwise
$S\cap(\zeta,\eta]$ would have this same order type, but the latter's order type
must be less than $\omega^{k+1}$.  Therefore, by Proposition~\ref{order}, we
have $\ord\alpha\le k$ and $\ord\beta\le k$.
\end{proof}

We now prove a crucial fact about substantial polynomials: They usually give the
right complexity (and their outputs are usually stable).  That is, if $f$ is a
low-defect polynomial, then its exceptional set (Defintion~\ref{except}) is
small; most inputs do not lie in the exceptional set.

However, this proposition will only prove that the exceptional set is small in a
weak sense.  When $\deg f=1$, we conclude that the exceptional set is finite,
which is about as strong a conclusion as one could expect, and we will elaborate
more on this case in Section~\ref{deg1}.  But when $\deg f>1$, the resulting
conclusion is quite weak.  Fortunately, it is possible to prove that the
exceptional set is small in a much stronger sense, and we will do this in a
subsequent paper \cite{hyperplanes}.  For now, though, this weak notion of a
small exceptional set will be all that we prove and all that we need.

\begin{prop}
\label{usu}
Suppose $f$ is a substantial low-defect polynomial of degree $k$, and let $S$ be
its exceptional set.  Then the order type of $\dft_f(S)$ is less than
$\omega^k$.  Equivalently, the order type of the set
\[ E := \{ \dft_\st(f(3^{n_1},\ldots,3^{n_k})) : (n_1,\ldots,n_k)\in S\} \]
is less than $\omega^k$, as is the order type of the set
\[ \{ \dft(f(3^{n_1},\ldots,3^{n_k})) : (n_1,\ldots,n_k)\in S\}. \]
\end{prop}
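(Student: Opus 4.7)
The plan is to argue by contradiction, combining Proposition~\ref{step3} (which bounds $\ord\alpha$ given a representation $\eta = \dft_\st(q)+k$) with Proposition~\ref{subclosure}(1) (namely $\clD \subseteq \Dst + \Nn$); no induction is needed. I first note that the three stated order-type bounds are equivalent. Indeed, writing $N := f(3^{n_1},\ldots,3^{n_k})$, for each $(n_1,\ldots,n_k)\in S$ the three quantities $\dft_f(n_1,\ldots,n_k)$, $\dft(N)$, and $\dft_\st(N)$ pairwise differ by nonnegative integers bounded above by $\dft(f)$. Partitioning $S$ by the pair of these shifts produces finitely many pieces on each of which the three maps are translates of one another by constants, and Proposition~\ref{cutandpaste} then transfers any one of the order-type bounds to the other two. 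So it suffices to show that $\dft_f(S)$ has order type strictly less than $\omega^k$.

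Let $a$ be the leading coefficient of $f$, which is stable by iterated application of Proposition~\ref{substrecur}, and set $\eta := \dft(f) = \dft_\st(a)+k$. Suppose for contradiction that $\dft_f(S)$ has order type $\ge \omega^k$. By Proposition~\ref{indivtype}, the full image of $\dft_f$ has order type exactly $\omega^k$, so $\dft_f(S)$ has order type equal to $\omega^k$; and since every proper initial segment of $\omega^k$ has strictly smaller order type, $\sup \dft_f(S) = \eta$. For each $(n_1,\ldots,n_k)\in S$, with $N$ as above, set
\[
m(n_1,\ldots,n_k) := \cpx{f} + 3(n_1 + \cdots + n_k) - \cpx{N}_\st = \dft_f(n_1,\ldots,n_k) - \dft_\st(N),
\]
a positive integer bounded above by $\eta$. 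Partition $S$ into finitely many pieces $S_m$ according to the value of $m$. By Proposition~\ref{cutandpaste}(2), some $\dft_f(S_{m^*})$ still has order type $\omega^k$, and is therefore cofinal in $\dft_f(S)$ with supremum $\eta$. On $S_{m^*}$ the map $(n_1,\ldots,n_k)\mapsto\dft_\st(N)$ equals $(n_1,\ldots,n_k)\mapsto\dft_f(n_1,\ldots,n_k)-m^*$, so shifting by $-m^*$ yields a subset $T \subseteq \Dst \subseteq \D$ of order type $\omega^k$ with $\sup T = \eta - m^*$.

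Now write $\eta - m^* = \clD(\alpha)$. Applied to the well-ordered set $\D$, Proposition~\ref{order} shows that the existence of $T$ forces $\ord\alpha \ge k$. On the other hand, by Proposition~\ref{subclosure}(1), $\eta - m^* = \dft_\st(q') + j$ for some stable $q'$ and some $j \in \Nn$; but $\eta - m^* \equiv \dft_\st(a) \pmod{1}$, and by Proposition~\ref{stoldprops}(2) distinct stable defects lie in distinct residue classes modulo $1$, so we must have $\dft_\st(q') = \dft_\st(a)$ and $j = k - m^*$. In particular $m^* \le k$. Applying Proposition~\ref{step3} to the representation $\eta - m^* = \dft_\st(a) + (k - m^*)$ then gives $\ord\alpha \le k - m^*$, which is strictly less than $k$ because $m^* \ge 1$, contradicting the lower bound $\ord\alpha \ge k$. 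The main subtle step is the conclusion $m^* \le k$: this rests on the uniqueness of the stable-defect representative in each congruence class modulo $1$, and it is precisely what allows Proposition~\ref{step3} to deliver the strict inequality $\ord\alpha < k$ that closes the argument.
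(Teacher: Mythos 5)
Your proposal is correct and follows essentially the same route as the paper's proof: assume the order type is $\omega^k$, partition the exceptional set by the integer drop $m$ between $\dft_f$ and $\dft_\st$ of the output, extract via Proposition~\ref{cutandpaste} a piece of full order type whose translate lands in $\Dst$ with supremum $\dft(f)-m^*$, and contradict Propositions~\ref{order}, \ref{step3}, and \ref{subclosure}. The only cosmetic difference is that you rule out $m^*>k$ up front via uniqueness of the stable-defect representative modulo $1$, where the paper splits into the cases $r<0$ and $r\ge0$.
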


\begin{proof}
The order type of $\dft_f(\Nn^k)$ is at most $\omega^k$ by
Proposition~\ref{indivtype}.  We want to show that the order type of $\dft_f(S)$
is strictly less than this, so assume the contrary, that they are equal.

Now, for any
$(n_1,\ldots,n_k)\in S$, we have
\[ \dft_\st(f(3^{n_1},\ldots,3^{n_k})) = \dft_f(n_1,\ldots,n_k) - \ell \]
for some whole number $\ell$ with $1\le \ell \le \lfloor \dft(f)\rfloor$.  (Here
we know $\ell \ge 1$ by the assumption that $(n_1,\ldots,n_k)\in S$, and we know
$\ell\le\lfloor\dft(f)\rfloor$ by Proposition~\ref{dftbd}.)
So define
\[ S_\ell := 
\{ (n_1,\ldots,n_k)\in S : 
\dft_\st(f(3^{n_1},\ldots,3^{n_k})) = \dft_f(n_1,\ldots,n_k)-\ell
\}.
\]
Then if we define $E_\ell = \dft_f(S_\ell)-\ell$, we can write
\[ E = \bigcup_{1\le\ell\le\lfloor\dft(f)\rfloor} E_\ell .\]

By our assumption that $\dft_f(S)$ has order type $\omega^k$, the same must be
true for at least one $E_\ell$ by Proposition~\ref{cutandpaste}.  However, since
$E_\ell+\ell$ and $\dft_f(S)$ both have order type $\omega^k$, the former must
be cofinal in the latter; so $\sup E_\ell = \dft(f) - \ell$, and therefore
$\dft(f)-\ell\in\clD$.

Suppose that $f$ has leading coefficient $q$, so $\dft(f)=\dft(q)+k$ with
$q$ stable.  Then $\dft(f)-\ell = \dft(q) + k - \ell$; since $\ell\ge 1$, this
means that $\dft(f) - \ell = \dft(q) + r$ for some integer $r$ strictly less
than $k$.

But by Propositions~\ref{stoldprops}, \ref{step3}, and \ref{subclosure}, this is
impossible.  If $r<0$, then by Proposition~\ref{subclosure} and part (2) of
Proposition~\ref{stoldprops}, the point $\dft(q)+r$ cannot lie in $\clD$ at all.
While if $r\ge 0$, if we write $\dft(q)+r = \clD(\alpha)$, then by
Proposition~\ref{step3} and part (2) of Proposition~\ref{stoldprops}, we must
have $\ord\alpha=r<k$, but (applying Proposition~\ref{order}) we have just shown
$\ord\alpha\ge k$.

Therefore, no $E_\ell$ can have order type $\omega^k$; and therefore neither can
$E$; and therefore neither can $\dft_f(S)$.  Moreover, neither can
\[ \{ \dft(f(3^{n_1},\ldots,3^{n_k})) : (n_1,\ldots,n_k)\in S\}, \]
as it is also covered by finitely many translates of $\dft_f(S)$.
\end{proof}

We now prove a stronger version of Proposition~\ref{step3}, where we bootstrap
its inequalities into equations.  We can now say exactly what the type of limits
we are looking at.

\begin{thm}
\label{precursor}
Let $q$ be a stable number and let $\eta=\delta(q)+k$ for some $k\in \Nn$.  Then
$\eta\in\clDa{\cpx{q}+k}$.  Moreover, if we write $\eta = \clD(\alpha)$, then
$\ord \alpha = k$.  Similarly, if we write $\eta=\clDa{\cpx{q}+k}(\beta)$, then
$\ord\beta = k$.
\end{thm}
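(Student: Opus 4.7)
The plan is to exhibit, for each $\eta = \dft(q)+k$ as in the statement, an explicit set of stable defects in $\Da{\cpx{q}+k}$ of order type exactly $\omega^k$ with supremum $\eta$; the substantial polynomial machinery we have just built is tailor-made to produce such a witness. Combined with the upper bounds already proved in Proposition~\ref{step3}, this pins down $\ord\alpha$ and $\ord\beta$.

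First, I would invoke Proposition~\ref{exists} to obtain a substantial low-defect polynomial $f$ of degree $k$ whose leading coefficient is $q$ and with $\dft(f)=\dft(q)+k=\eta$. By Proposition~\ref{indivtype} the set $\dft_f(\Nn^k)$ is well-ordered of order type $\omega^k$, and by Proposition~\ref{dftbd} its supremum equals $\dft(f)=\eta$.

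Next, let $S$ be the exceptional set of $f$. Since $f$ is substantial of degree $k$, Proposition~\ref{usu} gives that $\dft_f(S)$ has order type strictly less than $\omega^k$. Writing $\dft_f(\Nn^k)=\dft_f(S)\cup\dft_f(\Nn^k\setminus S)$ and applying Proposition~\ref{cutandpaste}(2), the ``non-exceptional'' part $T:=\dft_f(\Nn^k\setminus S)$ must itself have order type $\omega^k$. Because $\dft_f(S)$ has smaller order type than $\dft_f(\Nn^k)$, the set $T$ is cofinal in $\dft_f(\Nn^k)$, so $\sup T=\eta$. For each $(n_1,\dots,n_k)\notin S$ the definition of the exceptional set gives $\cpx{f(3^{n_1},\dots,3^{n_k})}_\st = \cpx{f}+3(n_1+\cdots+n_k)$, which means $\dft_\st(f(3^{n_1},\dots,3^{n_k}))=\dft_f(n_1,\dots,n_k)$; hence $T\subseteq\Dst$. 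Since $f$ is substantial and $q$ is stable, $\cpx{f}=\cpx{q}_\st+k=\cpx{q}+k$, so these stable complexities are all $\equiv\cpx{q}+k\pmod 3$, i.e.\ $T\subseteq\Dst\cap\Da{\cpx{q}+k}$.

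Finally, since $T$ is a subset of $\D$ (and of $\Da{\cpx{q}+k}$) with supremum $\eta$, we have $\eta\in\clDa{\cpx{q}+k}\subseteq\clD$, and Proposition~\ref{order} applied to $T$ gives $\ord\alpha\ge k$ and $\ord\beta\ge k$. Combining with the reverse inequalities already established in Proposition~\ref{step3}, we conclude $\ord\alpha=\ord\beta=k$. The one point requiring genuine work is the cofinality/order-type step in the middle paragraph, and all of the real difficulty there has been shifted to Proposition~\ref{usu}; once we know the exceptional set of a substantial polynomial contributes defects of order type less than $\omega^k$, the rest is a straightforward bookkeeping of well-ordered sums and congruences modulo $3$.
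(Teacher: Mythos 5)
Your proposal is correct and follows essentially the same route as the paper's proof: Proposition~\ref{exists} to produce the substantial polynomial of degree $k$, Proposition~\ref{usu} plus Proposition~\ref{cutandpaste} to show the non-exceptional values form a cofinal copy of $\omega^k$ inside $\Dst^{\cpx{q}+k}$ with supremum $\eta$, and then Propositions~\ref{order} and \ref{step3} to pin down $\ord\alpha=\ord\beta=k$. The only cosmetic difference is the justification of cofinality (the cleanest phrasing is that a non-cofinal $T$ would lie in a proper initial segment of $\dft_f(\Nn^k)$ and hence have order type below $\omega^k$), but the argument is the same.
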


\begin{proof}
By Proposition~\ref{exists}, we can choose a substantial polynomial $f$ with
leading coefficient $q$ with $\dft(f)=\eta$ and degree $k$.  Let $S$ be its
exceptional set.  By Proposition~\ref{indivtype}, the order type of
$\dft_f(\Nn^k)$ is $\omega^k$, but by Proposition~\ref{usu}, the order type of
$\dft_f(S)$ is strictly less than $\omega^k$; by Proposition~\ref{cutandpaste},
this implies that $\dft_f(\Nn^k\setminus S)$ has order type $\omega^k$ as well.
Moreover, $\dft_f(\Nn^k\setminus S)$ must obviously be cofinal within
$\dft_f(\Nn^k)$, as otherwise it would have strictly smaller order type;
therefore its supremum is (by Proposition~\ref{dftbd}) equal to $\dft(f)=\eta$.

But for $(n_1,\ldots,n_k)\notin S$, we have, by definition, that
\[
\dft(f(3^{n_1},\ldots,3^{n_k})) = \dft_f(n_1,\ldots,n_k),
\]
and that moreover this defect is a stable one.  Moreover, if
$(n_1,\ldots,n_k)\notin S$, then
\[
\cpx{f(3^{n_1},\ldots,3^{n_k})} = \cpx{f} + 3(n_1+\ldots+n_k) \equiv \cpx{f}
\pmod{3},
\]
and $\cpx{f}=\cpx{q}+k$.

Therefore, the set $\dft_f(\Nn^k\setminus S)$ is a subset of $\Dst^{\cpx{q}+k}$,
has order type $\omega^k$, and has supremum $\eta$.  This shows that
$\eta\in\clDa{\cpx{q}+k}$, and also (by Proposition~\ref{order}) that
$\ord\beta\ge k$ and $\ord\alpha\ge k$.
And we already know from Proposition~\ref{step3}, that $\ord\beta\le k$ and that
$\ord\alpha\le k$, so we conclude that $\ord\alpha = \ord\beta = k$, proving the
theorem.
\end{proof}

We have now essentially done the work of proving the main theorem.  All that
remains is to use order theory and topology to convert Theorem~\ref{precursor}
into more usable forms.

\subsection{From Theorem~\ref{precursor} to Theorem~\ref{cj8}}
\label{corsec}

Having proven Proposition~\ref{usu} and Theorem~\ref{precursor}, we now apply
it to yield a number of corollaries, including Theorem~\ref{cj8} and other
theorems discussed in the introduction.

\begin{prop}
\label{selfsim3}
If $u$ is a congruence class modulo $3$, then
we have \[\clDa{u}'=\clDa{u-1}+1\] and 
$\clDa{u}'''=\clDa{u}+3$.
\end{prop}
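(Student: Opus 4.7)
The plan is to prove the first equation $\clDa{u}'=\clDa{u-1}+1$ by matching up explicit point-by-point descriptions of the two sides, and then to derive $\clDa{u}'''=\clDa{u}+3$ from it by iterating three times, using that translation by an integer is a homeomorphism of $\R$ and hence commutes with the derived-set operator.

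For the first equation, I would start by characterizing the limit points of $\clDa{u}$. By Proposition~\ref{subclosure}, every $\eta\in\clDa{u}$ has the form $\eta=\dft(q)+k$ for some stable $q$ and some $k\in\Nn$ subject to $\cpx{q}+k\equiv u\pmod{3}$, and conversely every such $\eta$ lies in $\clDa{u}$ by Theorem~\ref{precursor}. Given such an $\eta$, Theorem~\ref{precursor} further tells us that if $\eta=\clDa{u}(\beta)$ then $\ord\beta=k$; combined with Proposition~\ref{order} (bridged to the topology on $\R$ by Propositions~\ref{topologies} and~\ref{limitpoints}, since $\clDa{u}$ is closed in $\R$), this says that $\eta$ is a limit point of $\clDa{u}$ exactly when $k\ge 1$. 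On the other side, the elements of $\clDa{u-1}+1$ are precisely the expressions $\dft(q)+(k'+1)$ with $k'\ge 0$ and $\cpx{q}+k'\equiv u-1\pmod{3}$; setting $k=k'+1$ shows that this is the same set of points. Hence $\clDa{u}'=\clDa{u-1}+1$.

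To get $\clDa{u}'''=\clDa{u}+3$, I would apply the first equation three times in succession. Since $x\mapsto x+1$ is a homeomorphism of $\R$, $(\clDa{u-1}+1)'=\clDa{u-1}'+1=\clDa{u-2}+2$, and applying it once more yields $\clDa{u}'''=\clDa{u-3}+3=\clDa{u}+3$, the last equality because $u$ is a congruence class modulo $3$. The only subtle step in this proposal is the first one, matching the ``ordinal'' notion of limit point arising from Proposition~\ref{order} with the topological one in $\R$; but this is exactly what the preliminary Propositions~\ref{topologies} and~\ref{limitpoints} are set up to deliver, so no real obstacle arises — all the substantive work was already done in proving Theorem~\ref{precursor}.
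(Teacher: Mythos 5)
Your proposal is correct and follows essentially the same route as the paper's proof: both use Proposition~\ref{subclosure} and Theorem~\ref{precursor} to write each point of $\clDa{u}$ as $\dft(q)+k$ with $q$ stable, identify the limit points as exactly those with $k\ge 1$ via Proposition~\ref{limitpoints} (equivalently Proposition~\ref{order}), match the two sides by shifting $k$ by one, and obtain the third derived set by iterating. The only cosmetic difference is that you phrase the first equation as an equality of point-by-point descriptions where the paper argues the two inclusions separately.
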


\begin{proof}
The set $\clDa{u}$ is closed, so $\clDa{u}'$ is a subset of it.  The question
then is, which points of $\clDa{u}$ are limit points?  By
Proposition~\ref{limitpoints}, they are the
points $\clDa{u}(\alpha)$ with $\ord\alpha\ge 1$.  However, by
Theorem~\ref{precursor} and Proposition~\ref{subclosure}, any such point can be
written as $\delta(q)+k$, where $q$ is stable, $k=\ord \alpha$, and
$u\equiv\cpx{q}+k \pmod{3}$.

Let \[\eta=\clDa{u}(\alpha)-1=\delta(q)+k-1.\]  Since $k\ge 1$, $k-1\ge 0$, and
so by Theorem~\ref{precursor}, \[\eta\in\clDa{\cpx{q}+k-1}=\clDa{u-1}.\]  This
shows that $\clDa{u}'\subseteq \clDa{u-1}+1$.

Conversely, if we start with $\eta\in\clDa{u-1}$, we may similarly write
$\eta=\delta(q)+k$ for some $k\ge 0$ and some stable $q$ with
$u-1\equiv\cpx{q}+k\pmod{3}$.  So $\eta+1=\delta(q)+k+1$.  So by
Theorem~\ref{precursor}, \[\eta+1\in\clDa{\cpx{q}+k+1}=\clDa{u};\] moreover,
since $k+1\ge 1$, we conclude by Theorem~\ref{precursor} and
Proposition~\ref{limitpoints} that it is a limit point of the set.  This proves
that $\clDa{u-1}+1\subseteq\clDa{u}'$, and so $\clDa{u}'=\clDa{u-1}+1$.

The second statement, that $\clDa{u}'''=\clDa{u}+3$, then just follows from
iterating the previous statement three times.
\end{proof}

We can now prove Theorem~\ref{cj8}.

\begin{proof}[Proof of Theorem~\ref{cj8}]
It suffices to prove the case $k=1$, as the more general case follows from
iterating this.

Fix $a$ and consider the sets $\clDa{u}$ and $\clDa{u+1}$.  By
Proposition~\ref{selfsim3}, we have $\clDa{u+1}'=\clDa{u}+1$.  Therefore,
for $1\le \alpha<\omega^\omega$,
\[ \clDa{u+1}'(\alpha) = \clDa{u}(\alpha) + 1.\]
Since the limit points of $\clDa{u+1}$ are by Proposition~\ref{limitpoints}
precisely the points of the form $\clDa{u+1}(\omega\beta)$ for some $1\le
\beta<\omega^\omega$, and since we are $1$-indexing, this means that
\[ \clDa{u+1}(\omega\alpha) = \clDa{u+1}'(\alpha) = \clDa{u}(\alpha) + 1,\]
as desired.
\end{proof}

We can now proceed to prove various corollaries.

We begin with the split-up analogue of Theorem~\ref{closure-intro}.  See
Section~\ref{dftexp} for a discussion of how this result may be interpreted.

\begin{cor}
\label{closure3}
For $u$ a congruence class modulo $3$,
\[\clDa{u}=(\D^u_\st+3\Nn)\cup (\D^{u-1}_\st+3\Nn+1)\cup (\D^{u-2}_\st+3\Nn+2).
\]
Equivalently,
\[\clDa{u} = \{k-3\log_3 n: k\ge \cpx{n},\ k\equiv u\pmod{3}\}.\]
Moreover, each $\D^u_\st$ is a discrete set.
\end{cor}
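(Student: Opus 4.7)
The plan is to prove the three assertions in order: the main set equality, the equivalent reformulation, and then the discreteness claim. All three follow rather directly from Theorem~\ref{precursor} and Proposition~\ref{subclosure}; the only real work is in organizing the congruence classes modulo $3$.

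For the main equality $\clDa{u}=(\Da{u}+3\Nn)\cup (\Da{u-1}+3\Nn+1)\cup (\Da{u-2}+3\Nn+2)$, I would prove both inclusions. For $\supseteq$, an element of the right-hand side has the form $\dft(q)+3k+j$ with $q$ stable, $\cpx{q}\equiv u-j\pmod 3$, $k\in\Nn$, and $j\in\{0,1,2\}$; then $3k+j\in\Nn$, and Theorem~\ref{precursor} places this element in $\clDa{\cpx{q}+3k+j}=\clDa{u}$. For $\subseteq$, take $\eta\in\clDa{u}$. Since $\clDa{u}\subseteq\clD$, Proposition~\ref{subclosure}(1) gives $\eta=\dft(q)+m$ with $q$ stable and $m\in\Nn$, and part (2) then forces $\cpx{q}+m\equiv u\pmod 3$. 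Writing $m=3k+j$ with $j\in\{0,1,2\}$, we get $\cpx{q}\equiv u-j\pmod 3$, so $\dft(q)\in\Da{u-j}$ and $\eta$ lies in the right-hand side.

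For the ``equivalently'' reformulation, the set $\{k-3\log_3 n : k\ge\cpx{n},\ k\equiv u\pmod 3\}$ rewrites as $\{\dft(n)+m : n\in\N,\ m\in\Nn,\ \cpx{n}+m\equiv u\pmod 3\}$. To match this with the first formula (which uses stable defects), I would invoke Theorem~\ref{stab}: every $n$ has $K\ge 0$ with $3^K n$ stable, and then $\dft(n)=\dft_\st(n)+\Delta(n)$, $\cpx{n}=\cpx{n}_\st+\Delta(n)$ with $\Delta(n)\in\Nn$. Shifting $m$ by $\Delta(n)$ preserves both the value $\dft(n)+m$ and the congruence class $\cpx{n}+m\pmod 3$, so the two forms describe the same set.

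Finally, for the discreteness of $\Da{u}$: any $\eta\in\Da{u}$ can be written as $\dft(q)$ for $q$ stable with $\cpx{q}\equiv u\pmod 3$. Applying Theorem~\ref{precursor} with $k=0$, if we write $\eta=\clDa{u}(\alpha)$, then $\ord\alpha=0$. By Proposition~\ref{order}, this means $\eta$ is an isolated point of $\clDa{u}$, and a fortiori an isolated point of the subset $\Da{u}$. Since every point of $\Da{u}$ is isolated in $\Da{u}$, the set is discrete. The main obstacle, such as it is, is really just the bookkeeping of congruences modulo $3$ when converting between defects of arbitrary $n$ and stable defects; all the hard analytic content is already packed into Theorem~\ref{precursor} and Proposition~\ref{subclosure}.
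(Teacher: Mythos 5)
Your proof is correct and follows essentially the same route as the paper: Proposition~\ref{subclosure} for the inclusion $\subseteq$, Theorem~\ref{precursor} for $\supseteq$, stabilization ($\Delta(n)$, $K(n)$) to pass between arbitrary and stable defects, and the $\ord\alpha=0$ criterion of Proposition~\ref{order} for discreteness. The only difference is cosmetic — the paper establishes the $\{k-3\log_3 n\}$ description first and reads off the union decomposition from it, while you argue in the opposite order.
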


\begin{proof}
By Proposition~\ref{subclosure}, if $\eta\in\clDa{u}$, then $\eta=\dft(n)+\ell$
for some stable $n$ and some $\ell\ge0$ with $\cpx{n}+\ell\equiv u\pmod{3}$.  So
$\eta=(\cpx{n}+\ell)-3\log_3 n$, and we can take $k=\cpx{n}+\ell$.

Conversely,
if we have $k$ and $n$ with $k\ge\cpx{n}$ and $k\equiv u\pmod{3}$, then in
particular we have $k\ge\cpx{n}_\st$; say $k=\cpx{n}_\st+\ell$.  Now, if we
let $K=K(n)$, and write $n'=3^K n$ and $k'=k+3K$, then we also have
$k'=\cpx{n'}+\ell$, and $k'\equiv k \pmod{3}$.  So by
Proposition~\ref{precursor},
\[k - 3\log_3 n = k'-3\log_3 n' = \dft(n')+\ell \in \clDa{\cpx{n'}+\ell} =
\clDa{k'} = \clDa{k} = \clDa{u}.\]

This proves that
\[\clDa{u} = \{k-3\log_3 n: k\ge \cpx{n},\ k\equiv u\pmod{3}\};\]
in the process, we have also shown that
\[\clDa{u} = \{k-3\log_3 n: k\ge \cpx{n}_\st,\ k\equiv u\pmod{3}\}.\]
The statement
\[\clDa{u}=(\D^u_\st+3\Nn)\cup (\D^{u-1}_\st+3\Nn+1)\cup (\D^{u-2}_\st+3\Nn+2).
\]
then just consists of breaking the latter statement up by congruence class.

Finally, note that if $\eta\in\D^u_\st$, say $\eta=\dft(n)$ with $n$ stable,
then $\eta=\dft(n)+0$, so by Propositions~\ref{step3} and \ref{limitpoints}, it
is not a limit point of $\clDa{u}$, i.e.~not a limit point of $\D^u_\st$.  That
is to say, $\D^u_\st$ contains none of its own limit points; it is a discrete
set.
\end{proof}

\begin{cor}
The different $\clDa{u}$ are disjoint.
\end{cor}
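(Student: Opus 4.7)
The plan is to deduce this directly from the characterization of $\clDa{u}$ just established in Corollary~\ref{closure3}, namely
\[ \clDa{u} = \{k-3\log_3 n : k\ge\cpx{n},\ k\equiv u\pmod{3}\}. \]
This reduces the problem to a rationality argument of exactly the same flavor as the one underlying Proposition~\ref{disjbasic} and part (7) of Proposition~\ref{oldprops}.

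Suppose for contradiction that $\eta\in\clDa{u}\cap\clDa{v}$ for two distinct congruence classes $u,v$ modulo $3$. Using the characterization above, write $\eta = k-3\log_3 n = k'-3\log_3 n'$ for some positive integers $n,n'$ and some integers $k\equiv u$, $k'\equiv v\pmod{3}$. Rearranging gives
\[ k-k' = 3\log_3(n/n'), \]
so $\log_3(n/n')$ is rational. Since $n/n'\in\Q_{>0}$, a rational logarithm base $3$ forces $n/n' = 3^j$ for some $j\in\Z$ (this is the same step used in the proof of part~(4) of Proposition~\ref{oldprops}). Hence $k-k' = 3j$, which means $k\equiv k'\pmod{3}$, contradicting $u\ne v$.

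There is no real obstacle here; the only content is the rational-logarithm observation, and once Corollary~\ref{closure3} is in hand the result is essentially immediate. The argument simply promotes the modulo-$3$ disjointness of the $\mathscr{D}^u$ (Proposition~\ref{disjbasic}) to a disjointness statement for their closures, by noting that the exponent $k$ carries the congruence-class information and cannot be changed except by an amount divisible by $3$ without altering the value of $k - 3\log_3 n$.
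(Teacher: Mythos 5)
Your argument is correct. The paper's own proof is a one-line appeal to Proposition~\ref{subclosure}: part (1) lets one write any $\eta\in\clD$ as $\dft(q)+k$ with $q$ stable and $k\in\Nn$, and part (2) then pins down the unique congruence class $u\equiv\cpx{q}+k\pmod 3$ with $\eta\in\clDa{u}$, so two distinct classes cannot share a point. You instead route through the explicit characterization in Corollary~\ref{closure3} and rerun the rationality argument ($k-k'=3\log_3(n/n')\in\Z$ forces $n/n'=3^j$ and hence $3\mid k-k'$). These are not really different proofs at bottom: the congruence constraint in Proposition~\ref{subclosure}(2) is itself established via Proposition~\ref{fdft}, whose proof is exactly the same rational-logarithm observation you invoke. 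The only trade-off is that the paper's citation reuses work already done, whereas your version is self-contained given Corollary~\ref{closure3} (which immediately precedes this corollary, so there is no circularity). Both are valid.
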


\begin{proof}
This follows immediately from Proposition~\ref{subclosure}.
\end{proof}

We also note the following interpretation of Corollary~\ref{closure3}:

\begin{cor}
\label{dftexprsplit}
For each congruence class $u$ modulo $3$,
\[ \clDa{u} = \{ \dft(E): E\textrm{ a $(1,+,\cdot)$-expression},
\cpx{E}\equiv u\pmod{3}\} \]
\end{cor}

\begin{proof}
This follows immediately from Corollary~\ref{closure3} and
Proposition~\ref{dftexprprop}.
\end{proof}

\begin{cor}
If $u_1$ and $u_2$ are congruence classes modulo $3$, then
\[ \clDa{u_1}+\clDa{u_2} \subseteq \clDa{u_1+u_2}. \]
\end{cor}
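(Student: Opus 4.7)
The plan is to reduce the statement to the concrete characterization of $\clDa{u}$ furnished by Corollary~\ref{closure3}, namely
\[
\clDa{u} = \{\, k - 3\log_3 n : k\ge \cpx{n},\ k\equiv u\pmod{3}\,\},
\]
and then simply combine two such representations multiplicatively in $n$ and additively in $k$.

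Concretely, I would take $\eta_1\in\clDa{u_1}$ and $\eta_2\in\clDa{u_2}$ and invoke Corollary~\ref{closure3} to write $\eta_i = k_i - 3\log_3 n_i$ with $k_i\ge\cpx{n_i}$ and $k_i\equiv u_i\pmod{3}$ for $i=1,2$. Then
\[
\eta_1+\eta_2 = (k_1+k_2) - 3\log_3(n_1 n_2).
\]
The congruence condition $k_1+k_2\equiv u_1+u_2\pmod{3}$ is immediate. For the inequality, the basic subadditivity $\cpx{n_1 n_2}\le\cpx{n_1}+\cpx{n_2}$ (which is built into the definition of integer complexity) gives $k_1+k_2\ge\cpx{n_1}+\cpx{n_2}\ge\cpx{n_1 n_2}$. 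Applying Corollary~\ref{closure3} in the other direction, with $n=n_1 n_2$ and $k=k_1+k_2$, places $\eta_1+\eta_2$ in $\clDa{u_1+u_2}$.

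There is essentially no obstacle here beyond recognizing that Corollary~\ref{closure3} is exactly the right tool: it converts the topologically defined set $\clDa{u}$ into an arithmetic set that is visibly closed under the operation of pairing off a $k$ with an $n$ subject to $k\ge\cpx{n}$, and the subadditivity of $\cpx{\cdot}$ under multiplication then makes the closure under addition transparent. (It is worth noting that one does \emph{not} need stability of $n_1$, $n_2$, or $n_1 n_2$, since the characterization in Corollary~\ref{closure3} only requires $k\ge\cpx{n}$, not $k\ge\cpx{n}_\st$ in any essential way beyond what has already been absorbed into its proof.)
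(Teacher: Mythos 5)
Your proposal is correct and is essentially identical to the paper's own proof: both invoke Corollary~\ref{closure3} to write $\eta_i = k_i - 3\log_3 n_i$, use subadditivity $\cpx{n_1n_2}\le\cpx{n_1}+\cpx{n_2}\le k_1+k_2$, and apply Corollary~\ref{closure3} again to conclude. No gaps.
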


\begin{proof}
If $\eta_1\in\clDa{u_1}$ and $\eta_2\in\clDa{u_2}$, then by
Corollary~\ref{closure3}, we may write $\eta_i=k_i-3\log_3 n_i$ where $k_i\ge
\cpx{n_i}$ and $k_i\equiv u_i \pmod{3}$.  Then $\cpx{n_1n_2}\le k_1+k_2$, so
\[ \eta_1+\eta_2 = k_1 + k_2 - 3\log_3(n_1n_2) \in \clDa{u_1+u_2}, \]
where here we have applied Corollary~\ref{closure3} again.
\end{proof}

\begin{cor}
\label{translate3}
If $u$ is a congruence class modulo $3$, then
$\clDa{u}=\overline{\mathscr{D}^u}$.
\end{cor}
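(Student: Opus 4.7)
The plan is to establish the equality by proving both inclusions, and both should come almost directly from Corollary~\ref{closure3} together with the definitions.

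For the inclusion $\overline{\mathscr{D}^u} \subseteq \clDa{u}$, since $\clDa{u}$ is by notation the closure $\overline{\Da{u}}$ and hence already closed, it suffices to show $\mathscr{D}^u \subseteq \clDa{u}$. Given any $\eta \in \mathscr{D}^u$, write $\eta = \dft(n) = \cpx{n} - 3\log_3 n$ for some $n>1$ with $\cpx{n} \equiv u \pmod{3}$. Setting $k = \cpx{n}$, we have $k \geq \cpx{n}$ and $k \equiv u \pmod{3}$, so the second form of Corollary~\ref{closure3} immediately places $\eta$ in $\clDa{u}$.

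For the reverse inclusion $\clDa{u} \subseteq \overline{\mathscr{D}^u}$, recall that $\Da{u} = \mathscr{D}^u_\st$ is defined as $\mathscr{D}^u \cap \mathscr{D}_\st$, so by definition $\mathscr{D}^u_\st \subseteq \mathscr{D}^u$. Taking closures gives $\clDa{u} = \overline{\mathscr{D}^u_\st} \subseteq \overline{\mathscr{D}^u}$.

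There is really no obstacle here; the corollary is a clean packaging of Corollary~\ref{closure3} together with the set-theoretic definition of $\mathscr{D}^u_\st$. The one mildly subtle point worth confirming is that the $n>1$ restriction in the definition of $\mathscr{D}^u$ causes no trouble: all stable numbers whose defect we use are at least $2$ (since $1$ is not stable), so the inclusion $\mathscr{D}^u_\st \subseteq \mathscr{D}^u$ holds without exception. The result then reinforces the philosophical message of Section~\ref{dftexp}: passing from $\mathscr{D}^u$ to its closure is the same as passing to the stable-defect version and then closing, so for topological purposes one may freely work with either.
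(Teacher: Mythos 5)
Your proof is correct and follows essentially the same route as the paper: the easy inclusion $\clDa{u}\subseteq\clDax{u}$ from $\D^u_\st\subseteq\D^u$, and the reverse by placing each $\dft(n)\in\D^u$ into $\clDa{u}$ via Corollary~\ref{closure3} (you use its second, ``$k-3\log_3 n$'' form with $k=\cpx{n}$, which is exactly how the paper argues in the analogous Corollary~\ref{translate}; the paper's proof of this particular corollary routes through the $\D^{u-k}_\st+k$ decomposition instead, but that is only a cosmetic difference). No gaps.
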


\begin{proof}
Since $\D^u_\st\subseteq \D^u$, we immediately have $\clDa{u}\subseteq
\clDax{u}$.  For the reverse, let $\eta=\dft(n)\in\D^u$; then we may write
$\eta=\dft_\st(n)+k$, where $k = \cpx{n}-\cpx{n}_\st \ge 0$.  Since
$\cpx{n}\equiv u\pmod{3}$, this means $\cpx{n}_\st \equiv u-k \pmod{3}$.
Thus $\eta \in \D^{u-k}_\st + k$.  By Corollary~\ref{closure3}, this means
$\eta\in \clDa{u}$.  So $\D^u\subseteq \clDa{u}$ and thus $\clDax{u}\subseteq
\clDa{u}$, i.e., $\clDa{u} = \clDax{u}$.
\end{proof}

\begin{cor}
\label{selfsim}
We have $\clD'=\clD+1$.
\end{cor}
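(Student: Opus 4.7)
The plan is to reduce the statement to Proposition~\ref{selfsim3}, which already expresses the self-similarity law one congruence class at a time, and then observe that these three pieces assemble cleanly.

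First I would decompose $\Dst$ according to the congruence class of complexity modulo $3$. By Proposition~\ref{disjbasic}, the sets $\D^u$ for the three congruence classes $u$ modulo $3$ are pairwise disjoint, and intersecting with $\Dst$ gives a disjoint decomposition
\[
\Dst = \D^0_\st \cup \D^1_\st \cup \D^2_\st.
\]
(Any stable defect $\dft(n)$ with $n>1$ lands in exactly one $\D^u_\st$ by the disjointness result, and $0=\dft(3)\in\D^0_\st$.) Since closure commutes with finite unions, this gives
\[
\clD = \clDa{0} \cup \clDa{1} \cup \clDa{2}.
\]

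Next I would take limit points. The operation of passing to the derived set also commutes with finite unions, so
\[
\clD' = (\clDa{0})' \cup (\clDa{1})' \cup (\clDa{2})'.
\]
Now I apply Proposition~\ref{selfsim3} to each piece: $(\clDa{u})' = \clDa{u-1}+1$. As $u$ runs through the three classes modulo $3$, so does $u-1$, and since translation commutes with union,
\[
\clD' = \bigl(\clDa{2}+1\bigr)\cup\bigl(\clDa{0}+1\bigr)\cup\bigl(\clDa{1}+1\bigr)
     = \bigl(\clDa{0}\cup\clDa{1}\cup\clDa{2}\bigr)+1 = \clD+1.
\]

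There is no real obstacle here; the substantive content has already been extracted in Proposition~\ref{selfsim3} (which in turn rests on Theorem~\ref{precursor} and Proposition~\ref{subclosure}). The only things to double-check are the two purely formal facts used above: that closure and the derived-set operator each distribute over finite unions, and that the congruence-class decomposition of $\Dst$ is genuinely disjoint, the latter being immediate from Proposition~\ref{disjbasic}.
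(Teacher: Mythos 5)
Your proof is correct and is essentially the same as the paper's: decompose $\clD$ as $\clDa{0}\cup\clDa{1}\cup\clDa{2}$, use that the derived-set operator commutes with finite unions, and apply Proposition~\ref{selfsim3} to each piece. The extra care you take with disjointness is harmless but not needed, since the union argument works regardless.
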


\begin{proof}
By Corollary~\ref{selfsim3},
\[
\clD' = (\clDa{0}\cup\clDa{1}\cup\clDa{2})'
= \clDa{0}'\cup\clDa{1}'\cup\clDa{2}'
= (\clDa{2}+1)\cup(\clDa{0}+1)\cup(\clDa{1}+1)
= \clD+1.
\]
\end{proof}

\begin{rem}
Note that we could have proved Corollary~\ref{selfsim} exactly the same way we
proved Corollary~\ref{selfsim3} (or as part of Corollary~\ref{selfsim3}), rather
than as a separate corollary of it.  But this proof highlights that something
similar will still hold if we put
together the different $\clDa{u}$ in a different way.
For instance, one might use the $R$ function from \cite[Definition 3.9]{intdft},
defined as $R(n)=\frac{n}{E(\cpx{n})}$, where $E(k)$ is the largest number
writable with $k$ ones; this is a transformation of the defect.
Then if one defines $\mathscr{R}=\{ R(n): n\in\N \}$, and define 
$\mathscr{R}^u$ similarly, then these will satisfy
\[
\overline{\mathscr{R}^0}' = \frac{2}{3}\overline{\mathscr{R}^2},\quad
\overline{\mathscr{R}^1}' = \frac{3}{4}\overline{\mathscr{R}^0},\quad
\textrm{and}\ 
\overline{\mathscr{R}^2}' = \frac{2}{3}\overline{\mathscr{R}^1},
\]
and thus $\overline{\mathscr{R}^u}''' = \frac{1}{3}\overline{\mathscr{R}^u}$
for each congruence class $u$, and therefore
$\overline{\mathscr{R}}'''=\frac{1}{3}\overline{\mathscr{R}}$ when put together.
Alternatively, one could normalize things differently and define $\mathscr{A}^i$
to be like the original sets $A_i$ from \cite{Arias}, where instead of
$\frac{n}{E(\cpx{k})}$, we use $\frac{n}{3^{\lfloor{\cpx{n}}/{3}\rfloor}}$.
With this one would similarly obtain
\[
\overline{\mathscr{A}^0}' = \frac{1}{3}\overline{\mathscr{A}^2},\quad
\overline{\mathscr{A}^1}' = \overline{\mathscr{A}^0},\quad
\textrm{and}\ 
\overline{\mathscr{A}^2}' = \overline{\mathscr{A}^1},
\]
and then the same consequences as for $\mathscr{R}$.  However, we will not
include a formal proof of these statements here.
\end{rem}

\begin{cor}
\label{cj8weak}
Given $1\le \alpha<\omega^\omega$ and $k\in \Nn$,
$\clD(\omega^k \alpha)=\clD(\alpha)+k$.
\end{cor}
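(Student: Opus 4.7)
The plan is to prove this by induction on $k$, reducing everything to the case $k=1$, which is essentially a direct consequence of Corollary~\ref{selfsim} combined with Proposition~\ref{limitpoints}. The base case $k=0$ reads $\clD(\alpha)=\clD(\alpha)$, which is trivial; so the real content is the case $k=1$, and the general case follows by iterating.

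For the case $k=1$: I would first observe that since $\alpha<\omega^\omega$, the index $\omega\alpha$ is still below $\omega^\omega$, so $\clD(\omega\alpha)$ is a legitimate element of $\clD$ (whose order type is $\omega^\omega$). By Proposition~\ref{limitpoints}, the limit points of $\clD$ are precisely those elements whose $1$-indexed position is of the form $\omega\beta$ for some ordinal $\beta\ge 1$; hence the enumeration of $\clD'$ in order is given by
\[ \clD'(\beta) = \clD(\omega\beta) \qquad (1\le\beta<\omega^\omega). \]
On the other hand, Corollary~\ref{selfsim} says $\clD' = \clD + 1$, and since translation by $1$ preserves order, the enumeration of $\clD'$ is also given by $\clD'(\beta) = \clD(\beta)+1$. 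Equating the two expressions yields $\clD(\omega\alpha) = \clD(\alpha)+1$ for all $1\le\alpha<\omega^\omega$.

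For the inductive step, assume the result for $k-1$. Given $1\le\alpha<\omega^\omega$, the ordinal $\omega^{k-1}\alpha$ is again below $\omega^\omega$ (writing $\alpha$ in Cantor normal form, each exponent gets shifted up by $k-1$, which keeps them finite). Setting $\beta = \omega^{k-1}\alpha$, the $k=1$ case gives $\clD(\omega\beta) = \clD(\beta)+1$, i.e., $\clD(\omega^k\alpha) = \clD(\omega^{k-1}\alpha)+1$. The inductive hypothesis then gives $\clD(\omega^{k-1}\alpha) = \clD(\alpha)+(k-1)$, and combining yields $\clD(\omega^k\alpha) = \clD(\alpha)+k$.

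There is no real obstacle here: all the substantive work was done in proving Theorem~\ref{precursor} and Corollary~\ref{selfsim}. This corollary is pure order-theoretic bookkeeping, with the only mildly subtle point being the correct handling of the $1$-indexing convention for closed sets (ensuring that $\clD'(\beta)=\clD(\omega\beta)$ rather than $\clD(\omega\beta-1)$ or similar), which is exactly what Proposition~\ref{limitpoints} was designed to handle cleanly.
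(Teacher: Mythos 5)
Your proposal is correct and follows essentially the same route as the paper: establish $\clD'(\beta)=\clD(\omega\beta)$ via Proposition~\ref{limitpoints}, combine with $\clD'=\clD+1$ from Corollary~\ref{selfsim} to get the $k=1$ case, and iterate. The paper states the iteration in one line where you spell out the induction, but there is no substantive difference.
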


\begin{proof}
By Proposition~\ref{limitpoints}, we know that the limit points of
$\clD$ are the points of the form $\clD(\omega\beta)$ for some
$1\le\beta<\omega^\omega$.  In other words, $\clD'(\alpha)=\clD(\omega\alpha)$.
So by Corollary~\ref{selfsim},
\[ \clD(\omega\alpha) = \clD'(\alpha) = \clD(\alpha) + 1.\]
Iterating this, we obtain $\clD(\omega^k \alpha) = \clD(\alpha)+k.$
\end{proof}

\begin{cor}
\label{closure-weak}
We have $\clD=\D_\st+\Nn$; equivalently,
\[\clD = \{k-3\log_3 n: k\ge \cpx{n}\}.\]
Moreover, $\D_\st$ is a discrete set.
\end{cor}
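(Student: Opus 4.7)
The plan is to read off Corollary~\ref{closure-weak} as an aggregated form of the split-up Corollary~\ref{closure3}, taking the union over the three residue classes modulo $3$ and cross-checking with the definition of $\D_\st+\Nn$.

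First I would obtain the description $\clD = \{k-3\log_3 n : k\ge \cpx{n}\}$ directly: since $\clD = \clDa{0}\cup\clDa{1}\cup\clDa{2}$, and Corollary~\ref{closure3} already gives $\clDa{u} = \{k-3\log_3 n: k\ge \cpx{n},\ k\equiv u\pmod{3}\}$, taking the union over $u$ drops the congruence condition and yields the desired set. (In fact the stronger version established in the proof of Corollary~\ref{closure3}, where $k\ge \cpx{n}_\st$ suffices, will be useful in the next step.)

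Next I would match this with $\D_\st+\Nn$. The easy inclusion is $\D_\st+\Nn\subseteq \{k-3\log_3 n:k\ge \cpx{n}\}$: any $\dft(m)+\ell$ with $m$ stable and $\ell\ge 0$ equals $(\cpx{m}+\ell)-3\log_3 m$, and $\cpx{m}+\ell\ge \cpx{m}$. For the reverse, given $n$ and $k\ge \cpx{n}$, let $m = 3^{K(n)}n$, which is stable by definition of $K(n)$. Then $\dft(m)=\dft_\st(n)=\cpx{n}_\st-3\log_3 n$, and setting $\ell = k-\cpx{n}_\st$ (which is $\ge 0$ since $k\ge \cpx{n}\ge\cpx{n}_\st$) gives $k-3\log_3 n = \dft(m)+\ell\in \D_\st+\Nn$.

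Finally, for discreteness of $\D_\st$, I would invoke Theorem~\ref{precursor} in the case $k=0$: any $\eta\in\D_\st$ has the form $\dft(q)+0$ for some stable $q$, so writing $\eta = \clD(\alpha)$ we get $\ord \alpha = 0$. By Proposition~\ref{limitpoints}, $\eta$ is not a limit point of $\clD$; since $\D_\st\subseteq\clD$, it is in particular not a limit point of $\D_\st$. Thus $\D_\st$ contains none of its own limit points and is discrete. All the real content is already in Theorem~\ref{precursor} and Corollary~\ref{closure3}; this corollary is just a matter of organizing the pieces, so I do not anticipate any substantive obstacle beyond the bookkeeping above.
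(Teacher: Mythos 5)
Your proposal is correct and takes essentially the same route as the paper: where the paper cites Proposition~\ref{subclosure} and Theorem~\ref{precursor} directly for the set identity, you aggregate Corollary~\ref{closure3} (itself an immediate consequence of those same two results) over the three residue classes and then verify the equivalence with $\Dst+\Nn$ explicitly, which is just the bookkeeping the paper dismisses as obvious. The discreteness argument is the same as the paper's (which uses Proposition~\ref{step3} where you use the stronger Theorem~\ref{precursor}; both give $\ord\alpha=0$ and then apply Proposition~\ref{limitpoints}).
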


\begin{proof}
The first statement follows immediately from Proposition~\ref{subclosure} and
Theorem~\ref{precursor}, and the equivalence of the two forms is obvious.

To prove the second statement, note that if $\eta\in\Dst$, say $\eta=\dft(n)$
with $n$ stable, then $\eta=\dft(n)+0$, so by Propositions~\ref{step3} and
\ref{limitpoints}, it is not a limit point of $\clD$, i.e., not a limit point
of $\Dst$.  That is to say, $\Dst$ contains none of its own limit
points; it is a discrete set.
\end{proof}

\begin{cor}
\label{dftexprcor}
\[ \clD = \{ \dft(E): E\textrm{ a $(1,+,\cdot)$-expression}\} \]
\end{cor}

\begin{proof}
This follows immediately from Corollary~\ref{closure-weak} and
Proposition~\ref{dftexprprop}.
\end{proof}

\begin{cor}
\label{addclosed}
The set $\clD$ is closed under addition.
\end{cor}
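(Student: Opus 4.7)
The plan is to use the explicit description of $\clD$ given in Corollary~\ref{closure-weak}, namely
\[ \clD = \{k - 3\log_3 n : k \ge \cpx{n}\}, \]
which reduces closure under addition to a simple combinatorial statement about complexities.

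First I would take arbitrary $\eta_1, \eta_2 \in \clD$ and, by Corollary~\ref{closure-weak}, write $\eta_i = k_i - 3\log_3 n_i$ with $k_i \ge \cpx{n_i}$ and $n_i \in \N$. Then I would compute
\[ \eta_1 + \eta_2 = (k_1 + k_2) - 3\log_3(n_1 n_2), \]
and check that $k_1 + k_2$ is a legitimate exponent for $n_1 n_2$ in the sense of Corollary~\ref{closure-weak}: since $\cpx{n_1 n_2} \le \cpx{n_1} + \cpx{n_2} \le k_1 + k_2$, the pair $(k_1+k_2, n_1 n_2)$ satisfies the required inequality, and so $\eta_1 + \eta_2 \in \clD$ by a second application of Corollary~\ref{closure-weak}.

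There is no real obstacle here; the entire argument is a two-line verification once Corollary~\ref{closure-weak} is in hand, and it parallels exactly the proof of the earlier corollary $\clDa{u_1} + \clDa{u_2} \subseteq \clDa{u_1+u_2}$ (just without tracking congruence classes mod $3$). The only thing to note is that this result is genuinely a consequence of the characterization $\clD = \{k - 3\log_3 n : k \ge \cpx{n}\}$: the analogous statement for $\D$ itself is not obvious (and would require knowing $\cpx{n_1 n_2} = \cpx{n_1} + \cpx{n_2}$), but by allowing any $k \ge \cpx{n}$ we gain the slack needed to absorb the inequality $\cpx{n_1 n_2} \le \cpx{n_1} + \cpx{n_2}$.
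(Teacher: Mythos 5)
Your proposal is correct and is essentially identical to the paper's proof: both write $\eta_i = k_i - 3\log_3 n_i$ with $k_i \ge \cpx{n_i}$ via Corollary~\ref{closure-weak}, and then use $\cpx{n_1 n_2} \le \cpx{n_1} + \cpx{n_2} \le k_1 + k_2$ to conclude $\eta_1 + \eta_2 \in \clD$. Your remark that the slack $k \ge \cpx{n}$ is what makes the argument work is a fair observation, but the core argument matches the paper's exactly.
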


\begin{proof}
If $\eta_1, \eta_2 \in\clD$, then by Corollary~\ref{closure-weak}, we may write
$\eta_i=k_i-3\log_3 n_i$ where $k_i\ge \cpx{n_i}$.  Then $\cpx{n_1n_2}\le
k_1+k_2$, so \[ \eta_1+\eta_2 = k_1 + k_2 - 3\log_3(n_1n_2) \in \clD, \] where
here we have applied Corollary~\ref{closure-weak} again.
\end{proof}

\begin{cor}
\label{translate}
We have $\clD=\overline{\D}$, and, given $u$ a congruence class modulo $3$,
$\clDa{u}=\overline{\D^u}$.
\end{cor}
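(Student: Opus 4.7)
The plan is to observe that the second assertion, $\clDa{u}=\overline{\D^u}$, has already been established as Corollary~\ref{translate3}, so nothing new is needed for that part; the whole task is to verify $\clD=\overline{\D}$.

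For $\clD\subseteq\overline{\D}$, I would simply note that $\Dst\subseteq\D$, so $\clD=\overline{\Dst}\subseteq\overline{\D}$. For the reverse inclusion it suffices, since $\clD$ is closed, to show $\D\subseteq\clD$. Here I would invoke Corollary~\ref{closure-weak}, which gives $\clD=\Dst+\Nn$. Then, given any $\dft(n)\in\D$, I would write
\[ \dft(n) = \dft_\st(n) + \Delta(n), \]
where $\Delta(n)=\cpx{n}-\cpx{n}_\st$ is a nonnegative integer by Proposition~\ref{stoldprops}. Since $\dft_\st(n)\in\Dst$, this puts $\dft(n)$ in $\Dst+\Nn=\clD$, giving $\D\subseteq\clD$ and hence $\overline{\D}\subseteq\clD$.

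As an alternative route that makes the parallel with the split-by-congruence statement more visible, one could union the three cases: $\overline{\D}=\overline{\D^0\cup\D^1\cup\D^2\cup\{1\}}=\clDa{0}\cup\clDa{1}\cup\clDa{2}\cup\{1\}=\clD$, using Corollary~\ref{translate3} and noting that $1\in\Dst+\Nn=\clD$ since $0\in\Dst$. I would prefer the first route because it is shorter and does not require separate handling of $\dft(1)$.

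There is no real obstacle here; all the work was done in Corollary~\ref{closure-weak} and Corollary~\ref{translate3}, and this corollary is just a repackaging. The only minor subtlety to flag is that $\D^u$ is defined only for $n>1$, so if one chooses the union-of-congruence-classes approach one must account for $\dft(1)=1$ separately, which is why the direct argument via $\Delta(n)$ is cleaner.
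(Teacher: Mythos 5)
Your proof is correct and follows essentially the same route as the paper: one inclusion from $\Dst\subseteq\D$, and the reverse from the characterization in Corollary~\ref{closure-weak} (the paper writes $\dft(n)=\cpx{n}-3\log_3 n$ with $\cpx{n}\ge\cpx{n}_\st$, which is the same decomposition as your $\dft_\st(n)+\Delta(n)$). You are also right that the congruence-class statement is literally Corollary~\ref{translate3}; the paper simply re-proves it here.
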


\begin{proof}
Since $\Dst^u\subseteq \D^u$, we immediately have $\clDa{u}\subseteq \clDax{u}$;
and similarly $\clD\subseteq\clDx$.  For the reverse, if $\eta\in\D^u$, that
means there is some $n$ such that $\eta=\cpx{n}-3\log_3 n$ with $\cpx{n}\equiv u
\pmod{3}$; since certainly $\cpx{n}\ge\cpx{n}$ (or since
$\cpx{n}\ge\cpx{n}_\st$), this means that by Proposition~\ref{closure3},
$\eta\in\clDa{u}$.  So $\D^u\subseteq\clDa{u}$ and thus
$\clDax{u}\subseteq\clDa{u}$, and so $\clDax{u}=\clDa{u}$.  The same reasoning
using Proposition~\ref{closure-weak} yields that $\D\subseteq\clD$ and so
$\overline{\D}=\clD$.
\end{proof}

Having proven all this, let us now prove the claims from the introduction.

\begin{proof}[Proofs of Theorems~\ref{selfsim-intro}, \ref{cj8weak-intro},
\ref{closure-intro}, \ref{dftexprthm-intro}, and \ref{closure}]
Theorem~\ref{cj8weak-intro} simply consists of Corollary~\ref{cj8weak} together
with Corollary~\ref{translate}.  Theorem~\ref{selfsim-intro} is simply
Corollary~\ref{selfsim} together with Corollary~\ref{translate}.
Theorem~\ref{closure} follows from Corollary~\ref{closure-weak} together with
the fact that $\Dst\subseteq\D\subseteq \Dst+\Nn$.  Theorem~\ref{closure-intro}
is then simply a weaker version of this, and Theorem~\ref{dftexprthm-intro}
simply combines this with Corollary~\ref{dftexprcor}.
\end{proof}

\section{Applications to self-similarity conjectures and the degree-$1$ case}
\label{cj9sec}

Having now proven all these forms of \cite[Conjecture~8]{Arias},
let us prove something closer to the original form.

\begin{thm}
\label{cj8orig}
For $u$ a congruence class modulo $3$ and $0\le \alpha<\omega^\omega$,
\[\lim_{k\to\infty} \D^{u+1}_\st [\omega\alpha+k]=\D^u_\st [\alpha]+1.\]

Similarly,
\[\lim_{k\to\infty} \D_\st [\omega\alpha+k]=\D_\st [\alpha]+1.\]
\end{thm}

\begin{proof}
From Theorem~\ref{cj8}, we know that
\[\clDa{u+1}(\omega(\alpha+1))=\clDa{u}(\alpha+1)+1.\]
Using Proposition~\ref{closure3} and Proposition~\ref{barshift}, we know that
$\clDa{u}(\alpha+1)=\Da{u}[\alpha]$.  Moreover, since $\clDa{u+1}$ is a closed
set, and we know by Proposition~\ref{topologies} that the subspace topology on
it coincides with the order topology, we have that
\[\clDa{u+1}(\omega(\alpha+1)) = \lim_{k\to\infty} \clDa{u}(\omega\alpha+k).\]
Applying Proposition~\ref{barshift} again, we conclude
\[\clDa{u+1}(\omega(\alpha+1)) = \lim_{k\to\infty} \D^u_\st[\omega\alpha+k],\]
and therefore
\[\lim_{k\to\infty} \D^{u+1}_\st [\omega\alpha+k]=\D^u_\st [\alpha]+1,\]
proving the first claim.  The proof of the second claim is similar.
\end{proof}

We can also immediately prove a weak version of Theorem~\ref{cj2thm}:

\begin{thm}[Weak version of Theorem~\ref{cj2thm}]
\label{cj2weak}
We have:
\begin{enumerate}
\item Suppose $a$ is stable.  Then there exists $K$ such that, for all $k\ge K$
and all $\ell\ge 0$,
\[\cpx{(a3^k+1)3^\ell}=\cpx{a}+3k+3\ell+1 .\]
\item Suppose $ab$ is stable and $\cpx{ab}=\cpx{a}+\cpx{b}$.  Then there exists
$K$ such that, for all $k\ge K$ and all $\ell\ge 0$,
\[\cpx{b(a3^k+1)3^\ell}=\cpx{a}+\cpx{b}+3k+3\ell+1 .\]
\end{enumerate}
\end{thm}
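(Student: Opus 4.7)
The plan is to recognize both statements as instances of Proposition~\ref{usu} applied to degree-$1$ substantial polynomials. In part (1), consider the low-defect polynomial $f(x) = ax+1$; by Proposition~\ref{1subst}, since $a$ is stable, $f$ is substantial, and hence $\cpx{f} = \cpx{a}+1$. In part (2), consider $f(x) = b(ax+1)$; again by Proposition~\ref{1subst}, the hypothesis that $ab$ is stable with $\cpx{ab}=\cpx{a}+\cpx{b}$ makes $f$ substantial, with $\cpx{f} = \cpx{a}+\cpx{b}+1$. In either case $f$ has degree $1$.

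Let $S$ be the exceptional set of $f$, that is, the set of $n \in \Nn$ with $\cpx{f(3^n)}_\st < \cpx{f} + 3n$. By Proposition~\ref{usu} applied with $k=1$, the set $\dft_f(S)$ has order type strictly less than $\omega^1 = \omega$; in other words, $\dft_f(S)$ is finite. Since by Proposition~\ref{dftbd} the function $\dft_f$ is strictly increasing in its single variable, it is injective, so $S$ itself must be finite. Let $K$ be any integer strictly greater than every element of $S$.

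For any $k \ge K$, we have $k \notin S$, so $\cpx{f(3^k)}_\st = \cpx{f} + 3k$. On the other hand, the basic upper bound (Proposition~\ref{basicub}) gives $\cpx{f(3^k)} \le \cpx{f} + 3k$, and in general $\cpx{f(3^k)}_\st \le \cpx{f(3^k)}$. Combining these forces $\cpx{f(3^k)} = \cpx{f(3^k)}_\st = \cpx{f} + 3k$, so $f(3^k)$ is stable by part~(6) of Proposition~\ref{stoldprops}. Stability then yields, for every $\ell \ge 0$,
\[ \cpx{f(3^k)\cdot 3^\ell} = \cpx{f(3^k)} + 3\ell = \cpx{f} + 3k + 3\ell. \]
Substituting the expressions for $\cpx{f}$ computed in the first paragraph recovers the formulas claimed in parts (1) and (2).

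The only real obstacle is Proposition~\ref{usu}, which has already been proved; beyond that, the argument is quite direct, because in degree $1$ the ``small exceptional set'' conclusion of that proposition is as strong as one could want (namely, finiteness), and stability then automatically propagates the equality to all multipliers $3^\ell$. Computability of $K$ is not asserted here and will be handled separately in Section~\ref{deg1}.
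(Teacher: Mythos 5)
Your proposal is correct and follows essentially the same route as the paper's proof: identify the relevant degree-$1$ substantial polynomial via Proposition~\ref{1subst}, apply Proposition~\ref{usu} to conclude the exceptional set is finite (using that $\dft_f$ is strictly increasing), and take $K$ beyond it. The only difference is that you spell out explicitly the final step (combining the definition of the exceptional set with Proposition~\ref{basicub} to deduce stability of $f(3^k)$ and hence the statement for all $\ell$), which the paper compresses into ``by definition of $S$''; this is a correct and welcome elaboration, not a deviation.
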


This theorem is the same as Theorem~\ref{cj2thm}, just without the computability
requirement.

\begin{proof}
In either case, we are considering a substantial polynomial $f$ of degree $1$;
in case (1), $f(x) = ax+1$, with $\cpx{f}=\cpx{a}+1$, and in case (2),
$f(x)=b(ax+1)$, with $\cpx{f}=\cpx{a}+\cpx{b}+1$ (by Propositions~\ref{1subst}
and \ref{ineq}).

Let $S$ be the exceptional set of $f$.  Then by Proposition~\ref{usu},
$\dft_f(S)$ has order type less than $\omega$, i.e., is finite; since $\dft_f$
is strictly increasing, this implies $S$ is finite as well.  So, choose $K$ to
be larger than any element of $S$.  Then, for $k\ge K$,
\[ \cpx{f(3^k)3^\ell} = \cpx{f} + 3k + 3\ell \]
by definition of $S$.  Substituting in the particular values of $f$ and
$\cpx{f}$ yields the theorem.
\end{proof}

We will prove the full Theorem~\ref{cj2thm}, with the
computability requirement, shortly in Section~\ref{deg1}.

Now we address Conjectures~9, 10, and 11 from \cite{Arias},
which also deal with the degree $1$ case.  (It is possible to write down
generalizations beyond this case, but these generalizations are not
interesting, and would essentially just recapitulate the contents of
Section~\ref{secproof}.)  Specifically, we prove:

\begin{thm}
\label{cj9corsep}
Let $n$ be a number not divisible by $3$, with $\cpx{n}_\st\equiv u\pmod{3}$,
and write $\dft_\st(n)=\Da{u}[\alpha]$.
Then the set
\[\{\Da{u+1}[\omega\alpha+k]: k\in\Nn\},\]
which may be equivalently written as
\[\Da{u+1} \cap [\Da{u+1}[\omega\alpha], \dft_\st(n)+1),\]
has finite symmetric
difference with the set
\[
\{ \dft_\st(b(a3^k + 1)):
\ k\ge 0,\ ab=n,\ \cpx{n}_\st =\cpx{a}_\st+\cpx{b}_\st\}.
\]

Similarly, if instead $\dft_\st(n)=\Dst[\alpha]$, then the set $\{\Dst[\omega\alpha+k]: k\in\Nn\}$, which may equivalently
be written as $\Dst\cap[\Dst[\omega\alpha],\dft_\st(n)+1)$, has finite symmetric
difference with the set
\[
\{ \dft_\st(b(a3^k + 1)):
\ k\ge 0,\ ab=n,\ \cpx{n}_\st =\cpx{a}_\st+\cpx{b}_\st\}.
\]
\end{thm}

This is intended as a repair of \cite[Conjectures~9--11]{Arias}, whose original
statements are a bit too strong.  These conjectures state that
if $q$ is a stable number with $\dft(q)=\Da{u}[\alpha]$, then the set
$A := \{\Da{u+1}[\omega\alpha+k]: k\in\Nn\}$ has finite symmetric difference
with the set
\[
E := \{ \dft_\st(b(a3^k + 1)):
\ k\ge 0,\ ab=q \},
\]
with the one-sided difference $A\setminus E$ being a finite subset of
$\{\dft_\st(2^k): k\in\N\}$.

(Here we have rephrased these conjectures somewhat from their original language;
see the Appendix of \cite{paperwo} for more on translating between these two
frameworks.)

As you can see from comparison to Theorem~\ref{cj9corsep}, the
removal of the final clause is the only major alteration.

Let us present counterexamples to the final clause for all three congruence
classes.  For the case where $\cpx{q}\equiv0\pmod{3}$, we can consider $q=64 =
\Da{0}[2]$, and observe that $70=\Da{1}[\omega2+1]$, even though $70$ cannot be
written as $b(a3^k+1)3^\ell$ with $ab=64$.  For the case where
$\cpx{q}\equiv1\pmod{3}$, we can consider $q=32=\Da{1}[1]$, and observe that
$35=\Da{2}[\omega+1]$, even though $35$ cannot be written as $b(a3^k+1)3^\ell$
with $ab=32$.  And for the case $\cpx{q}\equiv 2\pmod{3}$, we can consider
$q=5=\Da{2}[2]$, and observe that $1280=\Da{0}[\omega2]$, even though $1280$
cannot be written as $5(3^k+1)3^\ell$ or as $(5\cdot3^k+1)3^\ell$.  These
counterexamples have been chosen to have minimal defect.  Note that we have
omitted the computations to verify these, but all of this may be easily verified
from a good cover of $\overline{B}_{14\dft(2)}$, which can be computed using the
algorithms from \cite{paperalg}.

Now, it is worth noting that here we look not at factorizations satisfying
$\cpx{ab}=\cpx{a}+\cpx{b}$, but rather $\cpx{ab}_\st=\cpx{a}_\st+\cpx{b}_\st$.
Of course, if all divisors of $n$ (other than $1$) are stable, then these two
conditions are the same (aside from the cases where $a=1$ or $b=1$).  But in
general they may not be.  For instance, the stable number $856$ can be factored
as $8\cdot 107$.  We do not have $\cpx{8}+\cpx{107}=\cpx{856}$, but we do have
$\cpx{8}_\st+\cpx{107}_\st = \cpx{856}_\st$, and it is important that we do not
exclude this case.  (The stability of $856$, and the various values of
$\cpx{n}_\st$, were verified using the algorithms from \cite{paperalg}).  See
Section~7 of \cite{paperalg} for more information on the relation between these
two conditions on factorizations.

Note also that the condition that $n$ is not divisible by $3$ is not essential;
we include it because the theorem does not distinguish between a number $n$ and
$n3^k$ for $k\in \mathbb{Z}$, so we have chosen to require that $n$ is not
divisible by $3$ in order to keep things concrete and canonical.  No generality
is lost in this way.

In addition, while we have not stated this theorem constructively, it is
possible to prove it in a constructive manner.  We will skip doing this here
because we do not expect getting effective numbers for this to be of much
relevance, compared to the theorems in the next section where effectivity may be
of more use.

We now prove the theorem.

\begin{proof}[Proof of Theorem~\ref{cj9corsep}]
We include only a proof of the first part, as the proof of the second part is
exactly analogous.

Let $q=n3^{K(n)}$, so that $q$ is stable and $\dft(q)=\dft_\st(n)$.
So we know from Theorem~\ref{cj8orig} that
\[\lim_{k\to\infty} \D^{u+1}_\st [\omega\alpha+k]=\dft(q)+1=\dft_\st(n)+1.\]

Choose $a$ and $b$ with $ab=n$ and $\cpx{n}_\st=\cpx{a}_\st+\cpx{b}_\st$.
Let $A=a3^{K(a)}$, $B=b3^{K(b)}$, and $N=AB$.  Then we have 
\[ \cpx{N}_\st=\cpx{n}_\st+3K(a)+3K(b)
= \cpx{a}_\st+3K(a)+\cpx{b}_\st+3K(b)
= \cpx{A}_\st + \cpx{B}_\st,\]
so by Proposition~\ref{goodfac}, $N$ is also stable, and so
\[ \cpx{N}=\cpx{A}+\cpx{B}. \]

By Theorem~\ref{cj2weak}, we know that all but finitely many $k$ satisfy
\[ \cpx{B(A3^k+1)}_\st = \cpx{N} + 3k + 1 \]
and therefore
\[ \dft_\st(B(A3^k+1)) =
\cpx{N}+1-3\log_3 (N+B3^{-k}),
\]
meaning
\[
\lim_{k\to\infty} \dft_\st(B(A3^k+1)) = \cpx{N}+1-3\log_3 N = \dft(N)
+1=\dft_\st(n)+1.
\]
Moreover,
\[ \dft_\st(B(A3^k+1)) = \dft_\st(b(A3^k+1)), \]
and for all $k\ge K(a)$,
\[ \dft_\st(b(a3^k+1)) = \dft_\st(b(A3^{k-K(a)}+1)). \]
So in fact,
\[
\lim_{k\to\infty} \dft_\st(b(a3^k+1)) = \dft_\st(n)+1.
\]

Since for $k\ge K(a)$ we have $\dft_\st(b(a3^k+1))\in \Da{\cpx{q}+1}=\Da{u+1}$,
this means that all but finitely many of the $\dft_\st(b(a3^k+1))$ must be
numbers of the form $\Da{u+1}[\omega\alpha+k]$.

For the converse, take a good covering $\sS$ of $\overline{B}_{\dft(q)+1}$.
Let
\[\zeta = \max (\{ \dft(f,C): (f,C)\in \sS,\ \dft(f,C)<\dft(q)+1 \} \cup
\{0\}).\]
Now, since
\[\lim_{k\to\infty} \D^{u+1}_\st [\omega\alpha+k]=\dft(q)+1,\]
all but finitely many $\Da{u+1}[\omega\alpha+k]$ must lie in
$(\zeta,\dft(q)+1)$.  This means
they are of the form $\dft(m)$ for some leader $m$ that is efficiently
$3$-represented by some $(f,C)\in \sS$ (note we must actually have $C=\cpx{f}$).
But also,
\[\zeta<\dft(m)\le \dft(f,C)\le \dft(q)+1,\]
so we must have $\dft(f,C)=\dft(q)+1$.  By Proposition~\ref{cj9prop}, this
implies $\deg f\le 1$.  But we cannot have $\deg f=0$ as then we would have
$\dft(m)=\dft(f,C)$, in contradiction to the assumption that
$\dft(m)\in(\zeta,\dft(q)+1)$.  So $\deg f=1$, which again by
Proposition~\ref{cj9prop}, means $f$ is substantial.

So, by Proposition~\ref{1subst}, this means $f$ either has the form
$f(x)=B(Ax+1)$, with $AB$ stable and $\cpx{AB}=\cpx{A}+\cpx{B}$, and
$\cpx{f}=\cpx{AB}+1$; or $f(x)=Ax+1$, with $A$ stable, and $\cpx{f}=\cpx{A}+1$.
In this latter case, let $B=1$.  Since $\dft(f)=\dft(q)+1$, and differing stable
defects cannot be congruent modulo $1$, we must have $\dft(AB)=\dft(q)$; so
$AB=q3^j$ for some $j\in\mathbb{Z}$, and $\cpx{AB}=\cpx{q}+3j$.

Now, if $j<0$, then we may define $g(x)=B(A3^{-j}x+1)$, and $D=C-3j$, with the
result that all but finitely many $m$ that are efficiently $3$-represented by
$(f,C)$ will also be efficiently $3$-represented by $(g,D)$; and if we consider
$g$ instead of $f$, then we will have $B(A3^{-j})=q$ exactly.  So it suffices to
consider the case where $j\ge 0$, as otherwise we may replace $f$ by $g$ to
obtain $j=0$, at the loss of only finitely many defects.

So now we have that all but finitely many of our defects are of the form
$\dft(m)$, where $m$ is of the form $B(A3^k+1)$, $AB=q3^j$ is stable, and either
$\cpx{A}+\cpx{B}=\cpx{q3^j}$ or $B=1$.  By Theorem~\ref{cj2weak} again, all but
finitely many $B(A3^k+1)$ are stable, and we have only finitely many ordered
pairs $(A,B)$, meaning all but finitely many of our defects are of the form
$\dft_\st(B(A3^k+1))$.

Now let $a$ be the part of $A$ that is not divisible by $3$, and $b$ be the
part of $B$ that is not divisible by $3$, so that $ab=n$.  Moreover, since
either $B=1$ or $\cpx{A}+\cpx{B}=\cpx{q}$, we have
\[\cpx{a}_\st+\cpx{b}_\st=\cpx{n}_\st.\]  And by above all
but finitely many of our defects are of the form 
$\dft_\st(B(A3^k+1))$, but this is the same as
$\dft_\st(b(a3^{k+k_0}+1))$, where $A=a3^{k_0}$.

So, all but finitely many of our defects have the required form.
\end{proof}

\subsection{Stability and computability of the degree-$1$ case}
\label{deg1}

We now investigate the implications of Proposition~\ref{usu} and
Theorem~\ref{cj9corsep} for low-defect polynomials of degree $1$, this time
taking computability into account.  The simplest
case is Theorem~\ref{cj2thm}.

Of course, if we ignore the part about computability, then we could prove
Theorem~\ref{cj2thm} by direct application of Proposition~\ref{usu}; indeed, we
already did this as Theorem~\ref{cj2weak}.  Since we want a slightly stronger
statement, however, we will have to do slightly more work.

\begin{proof}[Proof of Theorem~\ref{cj2thm}]
In either case, we are considering a substantial polynomial $f$ of degree $1$;
in case (1), $f(x) = ax+1$, with $\cpx{f}=\cpx{a}+1$, and in case (2),
$f(x)=b(ax+1)$, with $\cpx{f}=\cpx{a}+\cpx{b}+1$ (by Propositions~\ref{1subst}
and \ref{ineq}).

So, let $q$ be the leading coefficient of $f$ (which is $a$ in case (1) and $ab$
in case (2)), and let $\eta = \dft(q)$, so $\dft(f) = \eta + 1$.  Now, given any
$r\in\Nn$, we can, by Theorem~\ref{goodcomput},
compute a good covering $\sS_r$ of $\overline{B}_{\eta-r}$.

So given
$0\le r\le\lfloor \eta\rfloor$, let
\[\zeta_r = \max (\{ \dft(g,C): (g,C)\in \sS_r,\ \dft(g,C)<\eta-r \} \cup
\{0\}).\]
Now if we had a number $n$ with $\dft(n)\in (\zeta_r, \eta-r)$, then $n$ would
be efficiently $3$-represented by $(\xpdd{g},C)$ for some $(g,C)\in \sS_r$; this
would imply $\dft(n)\le\dft(g,C)\le \eta-r$ and therefore $\dft(g,C)=\eta-r$.
However, by Proposition~\ref{polydft}, $\dft(g,C)$ is equal to a stable defect
plus a nonnegative integer, and as such not equal to any $\eta-r$ except
possibly when $r=0$;
therefore no such $n$ can exist unless $r=0$.  But if $r=0$, then
$\dft(g,C)=\eta$.  Since $\eta$ is a stable defect, this forces $\deg g = 0$ by
Proposition~\ref{cj9prop} again.  But this means that $\dft(n)=\eta-r$, in
contradiction to the assumption that $\dft(n)<\eta-r$.  Therefore, whether or
not $r>0$, we obtain $\D \cap (\zeta_r, \eta-r)=\emptyset$.

So, for each $0\le r\le \lfloor\eta\rfloor$, compute $K_r$ such that
$\dft_f(K_r)-r-1 > \zeta_r$; this is
straightforward as (by Definition~\ref{fdftdef})
\[\dft_f(k)=\cpx{a}+1-3\log_3(a+3^{-k})\] in case (1) and
\[\dft_f(k)=\cpx{ab}+1-3\log_3(b(a+3^{-k}))\] in case (2).
(Note that either way, $\lim_{k} \dft_f(k)=\dft(f)=\eta-r>\zeta_r$.)
Now let \[K = \max_{0\le r\le\lfloor \eta\rfloor} K_r.\]  Then for $k\ge K$, we
know $\dft_\st(f(3^k))\equiv\dft_f(k)\pmod {1}$, and also
\[\dft_\st(f(3^k))\le\dft_f(k)<\dft(f)=\eta+1.\]

This means we must have $\dft_\st(f(3^k))=\dft_f(k)$, because otherwise we would
have $\dft_\st(f(3^k)) = \dft_f(k)-1-r$ for some $r\ge 0$; but we know
\[ \zeta_r < \dft_f(K_r)-r-1 \le \dft_f(k)-r-1
< \dft(f) - 1 -r = \eta-r, \]
i.e., $\dft_f(k)-1-r\in(\zeta_r,\eta-r)$, while $\dft_\st(f(3^k))\in\Dst$, so
these quantities cannot be equal as these sets are disjoint.

Therefore, we conclude that for such $k$, we have $\cpx{f(3^k)}_\st =
\cpx{f}+3k$.  Or, in other words, for $k\ge K$ and $\ell\ge 0$, we have
$\cpx{\xpdd{f}(3^k, 3^\ell)}=\cpx{f}+3k+3\ell$.  Recalling once again that in
case (1) we have $\cpx{f}=\cpx{a}+1$ and in case (2) we have
$\cpx{f}=\cpx{a}+\cpx{b}+1$, and noting that all the steps in determining $K$
were computable, this proves the theorem.
\end{proof}

As was mentioned in Section~\ref{secproof}, this proof raises the question of
whether one can show for substantial polynomials of higher degree whether the
exceptional set can be shown to be ``small'' in a stronger sense than that
implied by Proposition~\ref{usu}, and in a future paper \cite{hyperplanes} we
shall show that it can be.

Returning to the degree $1$ case, however, we can immediately write down the
following corollary of Theorem~\ref{cj2thm}:

\begin{cor}
\label{cj2cor}
We have:
\begin{enumerate}
\item Let $a$ be a natural number.  Then there exists $K$ such that, for all
$k\ge K$ and all $\ell\ge 0$,
\[\cpx{(a3^k+1)3^\ell}=\cpx{a}_\st+3k+3\ell+1 .\]
\item Suppose $a$ and $b$ are natural numbers and
$\cpx{ab}_\st=\cpx{a}_\st+\cpx{b}$.  Then there exists $K$ such that, for all
$k\ge K$ and all $\ell\ge 0$,
\[\cpx{b(a3^k+1)3^\ell}=\cpx{a}_\st+\cpx{b}+3k+3\ell+1 .\]
\item Suppose $a$ and $b$ are natural numbers and
$\cpx{ab}_\st=\cpx{a}_\st+\cpx{b}_\st$.  Then there exists $K$ such that,
for all $k\ge K$ and all $\ell\ge K(b)$,
\[\cpx{b(a3^k+1)3^\ell}=\cpx{a}_\st+\cpx{b}_\st+3k+3\ell+1 .\]
\end{enumerate}
Moreover, in all these cases, it is possible to algorithmically compute how
large $K$ needs to be.
\end{cor}

\begin{proof}
For part (1), pick $k_0$ such that $a3^{k_0}$ is stable; let $A=a3^{k_0}$.
(Note that by Theorem~\ref{computk}, $k_0$ can be computed from $a$.)  Then
by part (1) of Theorem~\ref{cj2thm}, for all sufficiently large $k$ (and we can
compute how large from $k_0$ and $A$), we have
\begin{multline*}
\cpx{(a3^k + 1)3^\ell} =
\cpx{(A 3^{k-k_0} +1)3^\ell} = \\
\cpx{A} + 3(k-k_0) + 3\ell + 1 =
\cpx{a}_\st + 3k + 3\ell + 1.
\end{multline*}

For part (2), pick $k_0$ large enough such that $a3^{k_0}$ and $ab3^{k_0}$ are
both stable; let $A=a3^{k_0}$.  Again, note that $k_0$ may be computed from $a$
and $b$.
Then $Ab$ is stable, and
\[\cpx{Ab} = \cpx{ab}_\st + 3k_0
= \cpx{a}_\st + \cpx{b} + 3k_0
= \cpx{A} + \cpx{b},\]
so we may apply part (2) of Theorem~\ref{cj2thm}.  So for all sufficiently large
$k$ (and we can compute how large from $A$, $b$, and $k_0$), we have
\begin{multline*}
\cpx{b(a3^k + 1)3^\ell} =
\cpx{b(A 3^{k-k_0} +1)3^\ell} = \\
\cpx{A} + \cpx{b} + 3(k-k_0) + 3\ell + 1 =
\cpx{a}_\st + \cpx{b} + 3k + 3\ell + 1.
\end{multline*}

Finally, for part (3), let $\ell_0=K(b)$, and pick $k_0$ large enough such that
both $a3^{k_0}$ and $ab3^{k_0+\ell_0}$ are both stable; again, these quantities
can be computed from $a$ and $b$.  Let $A=a3^{k_0}$ and
let $B=b3^{\ell_0}$.
Then $AB$ is stable, and
\[\cpx{AB} = \cpx{ab}_\st + 3k_0 + 3\ell_0
= \cpx{a}_\st + \cpx{b}_\st + 3k_0 + 3\ell_0
= \cpx{A} + \cpx{B},\]
so we may again apply part (2) of Theorem~\ref{cj2thm}.  So for all sufficiently
large $k$ (and how large can be computed from $A$, $B$, and $k_0$), and any
$\ell\ge \ell_0$, we have
\begin{multline*}
\cpx{b(a3^k + 1)3^\ell} =
\cpx{B(A 3^{k-k_0} +1)3^{\ell-\ell_0}} = \\
\cpx{A} + \cpx{B} + 3(k-k_0) + 3(\ell-\ell_0) + 1 =
\cpx{a}_\st + \cpx{b}_\st + 3k + 3\ell + 1.
\end{multline*}
\end{proof}

However, this is just a corollary.  In fact, it is possible to go further than
this, and prove Theorem~\ref{dragons}, which requires venturing out of the realm
of substantial polynomials and into the realm of polynomials that are, one might
say, just barely insubstantial.  To do this, we use the idea of
Theorem~\ref{cj9corsep}, even though that exact statement will not appear
in the proof.

Now one might say that the polynomials considered in Theorem~\ref{dragons} have
``insubstantiality $1$'', but we will not actually attempt to define a general
notion of ``insubstantiality'', as it is not entirely clear how to do that.  As
noted in Section~\ref{secdragon}, however, one cannot extend
Theorem~\ref{dragons} to cases of ``insubstantiality $2$'', as demonstrated by
the case of $2(1094x+1)$.  (Note that the numbers $2$, $1094$, and $2188$ are
all stable, as can be computed with the algorithms from \cite{paperalg}.)

Let us make another note about this theorem before we prove it.  The division of
Theorem~\ref{dragons} into two parts, both of which require $a$ to be stable and
the second of which requires $b$ to be stable, may make it seem that something
has been lost in moving to barely-insubstantial polynomials; after all,
Theorem~\ref{cj2thm} has no stability condition on $a$ or $b$, only on $ab$.
However, that is because in Theorem~\ref{cj2thm}, the stability conditions on
$a$ and $b$ are implicit.  By Proposition~\ref{goodfac}, if $ab$ is stable and
$\cpx{ab}=\cpx{a}+\cpx{b}$, then $a$ and $b$ are themselves stable.  But in
Theorem~\ref{dragons}, we do not have $\cpx{ab}=\cpx{a}+\cpx{b}$, but rather
$\cpx{ab}=\cpx{a}+\cpx{b}-1$.  This requires adding explicit stability
conditions on $a$ and $b$, where in Theorem~\ref{cj2thm} they were implicit.

We now prove Theorem~\ref{dragons}.

\begin{proof}[Proof of Theorem~\ref{dragons}]
Suppose $ab$ is stable, $\cpx{a}+\cpx{b}=\cpx{ab}+1$, $b>1$, and $a$ is stable.
Let $q=ab$, and let $f(x)=b(ax+1)$ and
\[C=\cpx{a}+\cpx{b}+1=\cpx{q}+2,\] so
$\dft(f,C)=\dft(q)+2$.  Let \[\eta = \dft(q) + 1 = \dft(f,C) - 1.\]

We wish to find a $K$ such that, for all $k\ge K$, we have
$\cpx{f(3^k)}=C+3k$; and so that, if $b$ is stable, we moreover have
$\cpx{f(3^k)}_\st = C+3k$.  Now, if the first of these statements fails, then we
obtain
\[ \cpx{f(3^k)} \le C + 3k - 1, \]
and if the second fails we obtain
\[ \cpx{f(3^k)}_\st \le C + 3k - 1. \]
These in turn imply
\[ \dft(f(3^k)) < \dft(f,C) - 1 = \eta \]
and
\[ \dft_\st(f(3^k)) < \dft(f,C) - 1 = \eta, \]
respectively.

So, we will find a $K$ such that, for $k\ge K$, we can rule out the first of
these possibilities; and such that, under the assumption that $b$ is stable, we
can rule out the second as well.

Now, given any
$r\in \Nn$, we can, by Theorem~\ref{goodcomput},
compute a good covering $\sS_r$ of $\overline{B}_{\eta-r}$.
So, given $0\le r\le\lfloor \eta\rfloor$, let
\[\zeta_r = \max (\{ \dft(g,D): (g,D)\in \sS_r,\ \dft(g,D)<\eta-r \} \cup
\{0\}).\]
Now if we had a number $n$ with $\dft(n)\in (\zeta_r, \eta-r)$, then $n$ would
be efficiently $3$-represented by some $(\xpdd{g},D)$ for $(g,D)\in \sS_r$; this
would imply $\dft(n)\le\dft(g,D)\le \eta-r$ and therefore $\dft(g,D)=\eta-r$.
However, as before (again using Proposition~\ref{polydft}), $\dft(g,D)$ is equal
to a defect plus a nonnegative integer, and as such not equal to any $\eta-r$
except possibly when $r\le 1$, since $\eta$ is equal to $1$ plus a stable
defect.  So for $r>1$, we have $\Dst\cap (\zeta_r, \eta-r)=\emptyset$.

Moreover, when $r=1$, we have $\dft(g,D)=\eta-1=\dft(q)$.  Since this is a
stable defect, this once again forces $\deg g = 0$.  So this means that
$\dft(n)=\eta-r$, in contradiction to the assumption that $\dft(n)\in
(\zeta_r,\eta-r)$, and we again obtain $\D\cap(\zeta_r, \eta-r)=\emptyset$.

So for each $r>0$, we can as before compute $K_r$ such that $\dft_{f,C}(K_r)-r-1
> \zeta_r$ (since $\lim_k \dft_{f,C}(k)=\dft(f,C)=\eta+1$); we can then be
assured that, for $k\ge K_r$, we cannot have
$\dft(f(3^k))\in(\zeta_r,\eta-r)$ nor can we have
$\dft_\st(f(3^k))\in(\zeta_r,\eta-r)$. This leaves the problem of determining a
suitable $K_0$ for the case of $r=0$.

We claim that in this case, we may pick $K_0$ in the same way; that is, it
suffices to choose $K_0$ such that $\dft_{f,C}(K_0)-1 > \zeta_0$.  In other
words, we wish to show that for $k\ge K_0$ it is not possible to have
$\dft(f(3^k))\in (\zeta_0,\eta)$; and that if $b$ is stable, it is not possible
to have $\dft_\st(f(3^k))\in (\zeta_0,\eta)$.

Now in this $r=0$ case, if $\dft(n)\in (\zeta_0, \eta)$, then we can as before
take $(g,D)\in\sS_0$ that efficiently $3$-represents $n$.  By
Proposition~\ref{cj9prop}, as $\eta=\dft(q)+1$, this implies $\deg g\le 1$, with
$\deg g=1$ if and only if $g$ is substantial.  However, by the same reasoning as
in the $r=1$ case, we cannot have $\deg g=0$, as this would force
$\dft(n)=\eta$.  So $g$ must be a substantial polynomial of degree $1$.
Such a $g$ takes the form
$g(x)=d(cx+1)$, where $cd=q$ and either $\cpx{c}+\cpx{d}=\cpx{q}$ or $d=1$.

So what we wish to show is that we cannot have $\dft(f(3^k))=\dft(g(3^\ell))$
for any $\ell\in\Nn$; and that, if $b$ is stable, we moreover cannot have
$\dft_\st(f(3^k))=\dft(g(3^\ell))$ for any $\ell\in\Nn$.  So, again, assume that
such an equality does hold.

In the former case, $f(3^k)$ is efficiently $3$-represented by our
$(\xpdd{g},D)$; in other words, $f(3^k)=g(3^\ell)3^j$ for some substantial $g\in
\sS_0$ and $j\in\Nn$, with $g(3^\ell)$ a leader.  In the latter case, where we
assume $b$ stable, we merely obtain that $f(3^k)=g(3^\ell)3^j$ for some $j\in
\Z$.  So let us combine these assumptions and say that either $j\ge 0$ or $b$ is
stable, and from this derive a contradiction.

So, in either case, we may write
\[ b(a3^k + 1) = d(c3^\ell+1)3^j, \]
which we may rewrite as
\[ q3^k + b = q3^{\ell+j} + d3^j.\]
Now, we know that $b,d\le q$, and so in particular $b\le q3^k$ and $d3^j \le
q3^{\ell+j}$.  Therefore, we know that
\[ q3^k \le q3^k + b \le 2q3^k \]
and
\[ q3^{\ell+j} \le q3^{\ell+j} + d3^j \le 2q3^{\ell+j}. \]
Since the quantities being bounded are equal, both sets of bounds must apply to
this one quantity, which is only possible if $k=\ell+j$, as otherwise the
described intervals are disjoint.

So $q3^k + b = q3^k + d3^j$, or in other words, $b=d3^j$, which implies
$a=c3^{-j}$, since  $ab=cd$.  Now, if $d=1$, then $c=q$, so $b=3^j$ (and
therefore $j>0$, as $b>1$) and $a=q3^{-j}$.  But we assumed $a$ is stable, so
\[\cpx{q}=\cpx{a3^j}=\cpx{a}+3j=\cpx{a}+\cpx{b},\] contrary to the assumption
that $\cpx{q}=\cpx{a}+\cpx{b}-1$.

So we instead must have $d>1$, $\cpx{c}+\cpx{d}=\cpx{q}$, and therefore $c$ and
$d$ both stable by Proposition~\ref{goodfac}.  Then in this case note that since
$a$ and $c$ are both stable, we have $\cpx{a}=\cpx{c}-3j$ regardless of the sign
of $j$.

Now, here is where we make use of the alternative we set up above, that
either $j\ge 0$ or $b$ is stable.
If $j\ge 0$, then we may conclude that $\cpx{b}=\cpx{d}+3j$, because $d$ is
stable.  While if $b$ is stable, then that means $b$ and $d$ are both stable; so
we may again conclude that $\cpx{b}=\cpx{d}+3j$, using the stability of $d$ if
$j\ge 0$ and using the stability of $b$ if $j\le 0$.  But this means that
\[\cpx{a}+\cpx{b}=\cpx{c}+\cpx{d}=\cpx{q},\] again contrary to the assumption
that $\cpx{a}+\cpx{b}=\cpx{q}+1$, and we have reached a contradiction.

This shows that our choice of $K_0$ satisfies the required conditions; it is not
possible to have $\dft(f(3^k))\in (\zeta_0,\eta)$, and if $b$ is stable, it is
not possible to have $\dft_\st(f(3^k))\in (\zeta_0,\eta)$.

So we may once again let \[K = \max_{0\le r\le\lfloor \eta\rfloor} K_r.\]  Then
for $k\ge K$, we know $\dft(f(3^k))\equiv\dft_{f,C}(k)\pmod {1}$, and also
\[\dft(f(3^k))\le\dft_{f,C}(k)<\dft(f)=\eta+1;\]
and the same is true of $\dft_\st(f(3^k))$.

So once again we conclude that $\dft(f(3^k))=\dft_{f,C}(k)$, and that if $b$ is
stable then $\dft_\st(f(3^k))=\dft_{f,C}(k)$, because otherwise
the defect under consideration would equal $\dft_{f,C}(k)-1-r$ for some $r\ge
0$, and therefore lie in some interval $(\zeta_r,\eta-r)$, a possibility we have
just ruled out (unconditionally for $\dft(f(3^k))$ and under the condition that
$b$ is stable for $\dft_\st(f(3^k))$).

Therefore, we conclude that for $k\ge K$, we have $\cpx{f(3^k)} = C+3k$, and if
$b$ is stable, $\cpx{f(3^k)}_\st=C+3k$.  Applying the definitions of $f$, $C$,
and stable complexity yields the desired equations.  Finally, we note that all
the steps in determining $K$ were computable, so this proves the theorem.
\end{proof}

Finally, just as we generalized Theorem~\ref{cj2thm} to Corollary~\ref{cj2cor},
let us generalize Theorem~\ref{dragons} in the same way.

\begin{cor}
\label{cordragons}
We have:
\begin{enumerate}
\item Suppose $\cpx{a}_\st+\cpx{b}=\cpx{ab}_\st+1$, and $b\ne 1$.  Then there
exists $K$ such that for all $k\ge K$,
\[\cpx{b(a3^k+1)}=\cpx{a}_\st+\cpx{b}+3k+1.\]
\item Suppose $\cpx{a}_\st+\cpx{b}_\st=\cpx{ab}_\st+1$.  Then there exists $K$
such that for all $k\ge K$ and $\ell\ge K(b)$,
\[\cpx{b(a3^k+1)3^\ell}=\cpx{a}_\st+\cpx{b}_\st+3k+3\ell+1.\]
\end{enumerate}
Moreover, in both these cases, it is possible to algorithmically compute how
large $K$ needs to be.
\end{cor}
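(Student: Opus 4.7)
The plan is to reduce Corollary~\ref{cordragons} to Theorem~\ref{dragons} by stabilizing $a$ (and, in part (2), also $b$) through multiplication by appropriate powers of $3$, exactly as Corollary~\ref{cj2cor} was deduced from Theorem~\ref{cj2thm}. Computability of all the stabilization exponents is guaranteed by Theorem~\ref{computk}.

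For part (1), I would take $k_0 \ge \max\{K(a), K(ab)\}$ and set $A = a3^{k_0}$, so that $A$ and $Ab = ab3^{k_0}$ are both stable, with $\cpx{A} = \cpx{a}_\st + 3k_0$ and $\cpx{Ab} = \cpx{ab}_\st + 3k_0$. The hypothesis $\cpx{a}_\st + \cpx{b} = \cpx{ab}_\st + 1$ then gives $\cpx{A} + \cpx{b} = \cpx{Ab} + 1$, and combined with $b \ne 1$ this is precisely what Theorem~\ref{dragons}(1) requires of the pair $(A,b)$. That theorem produces a computable $K'$; taking $K = K' + k_0$ and writing $a3^k = A3^{k-k_0}$ for $k \ge k_0$ then yields the stated formula after substituting $\cpx{A} = \cpx{a}_\st + 3k_0$.

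Part (2) is analogous but with both variables stabilized. I would let $\ell_0 = K(b)$, take $k_0 \ge \max\{K(a), K(ab3^{\ell_0})\}$, and set $A = a3^{k_0}$, $B = b3^{\ell_0}$; then $A$, $B$, and $AB = ab3^{k_0+\ell_0}$ are all stable, and $\cpx{A} + \cpx{B} = \cpx{AB} + 1$ follows by the same bookkeeping as in part (1). The hypothesis rules out $b = 1$ (since then both sides of the complexity equation would collapse to $\cpx{a}_\st$), so $B \ge b > 1$, and the remaining hypotheses of Theorem~\ref{dragons}(2) applied to $(A,B)$ are met. Translating back via the identity $b(a3^k+1)3^\ell = B(A3^{k-k_0}+1)3^{\ell-\ell_0}$, valid for $k \ge k_0$ and $\ell \ge \ell_0 = K(b)$, yields the conclusion.

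There is no genuine obstacle here beyond the small bookkeeping of converting between stable and ordinary complexities via $\cpx{n3^k} = \cpx{n}_\st + 3k$ whenever $n3^k$ is stable; the only point requiring a moment's thought is verifying that $b = 1$ is automatically excluded by the hypothesis of part (2), so that the condition $B > 1$ needed to invoke Theorem~\ref{dragons} comes for free rather than as an extra assumption.
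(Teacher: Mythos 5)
Your proposal is correct and follows essentially the same route as the paper: stabilize $a$ (and $b$ in part (2)) by multiplying by computable powers of $3$, check that the hypothesis $\cpx{A}+\cpx{B}=\cpx{AB}+1$ transfers, apply Theorem~\ref{dragons}, and translate back. Your remark that the hypothesis of part (2) automatically forces $b>1$ (since $\cpx{1}_\st=0$) is a detail the paper leaves implicit, and it is correctly handled.
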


\begin{proof}
For part (1), pick $k_0$ large enough such that $a3^{k_0}$ and
$ab3^{k_0}$ are both stable; let $A=a3^{k_0}$.  (Note that by
Theorem~\ref{computk}, $k_0$ can be computed from $a$ and $b$.)
Then
\[\cpx{Ab} = \cpx{ab}_\st + 3k_0
= \cpx{a}_\st + \cpx{b} + 3k_0 - 1
= \cpx{A} + \cpx{b} - 1,\]
so we may apply part (1) of Theorem~\ref{dragons}.  So for all sufficiently
large $k$ (and how large can be computed from $A$, $b$, and $k_0$), we have
\begin{multline*}
\cpx{b(a3^k + 1)} =
\cpx{b(A 3^{k-k_0} +1)} = \\
\cpx{A} + \cpx{b} + 3(k-k_0) + 1 =
\cpx{a}_\st + \cpx{b} + 3k + 1.
\end{multline*}

For part (2), let $\ell_0=K(b)$, and pick $k_0$ large enough so that $a3^{k_0}$
and $ab3^{k_0+\ell_0}$ are both stable; let $A=a3^{k_0}$ and $B=b3^{\ell_0}$.
Again, all these quantities may be computed from $a$ and $b$.
Then
\[\cpx{AB} = \cpx{ab}_\st + 3k_0 + 3\ell_0
= \cpx{a}_\st + \cpx{b}_\st + 3k_0 + 3\ell_0 - 1
= \cpx{A} + \cpx{B} - 1,\]
so we may apply part (2) of Theorem~\ref{dragons}.  So for all sufficiently
large $k$ (and how large can be computed from $A$, $B$, and $k_0$), and all
$\ell\ge\ell_0$, we have
\begin{multline*}
\cpx{b(a3^k + 1)3^\ell} =
\cpx{B(A 3^{k-k_0} +1)3^{\ell-\ell_0}} = \\
\cpx{A} + \cpx{B} + 3(k-k_0) + 3(\ell-\ell_0) + 1 =
\cpx{a}_\st + \cpx{b}_\st + 3k + 3\ell + 1.
\end{multline*}
\end{proof}

\subsection*{Acknowledgements}
Thanks to Jeffrey Lagarias for extensive help with editing.  Work of the authors
was supported by NSF grants DMS-0943832 and DMS-1101373.

\appendix

\section{Comparison to addition chains}
\label{addchains}

It is worth discussing here the possibility of theorems analogous to
Theorem~\ref{selfsim-intro} and Theorem~\ref{cj8weak-intro} for addition chains.
An \emph{addition chain} for $n$ is defined to be a sequence
$(a_0,a_1,\ldots,a_r)$ such that $a_0=1$, $a_r=n$, and, for any $1\le k\le r$,
there exist $0\le i, j<k$ such that $a_k = a_i + a_j$; the number $r$ is called
the length of the addition chain.  The shortest length among addition chains for
$n$, called the \emph{addition chain length} of $n$, is denoted $\acl(n)$.
Addition chains were introduced in 1894 by H.~Dellac \cite{Dellac} and
reintroduced in 1937 by A.~Scholz \cite{aufgaben}; extensive surveys on the
topic can be found in Knuth \cite[Section 4.6.3]{TAOCP2} and Subbarao
\cite{subreview}.

The notion of addition chain length has obvious similarities to that of integer
complexity; each is a measure of the resources required to build up the number
$n$ starting from $1$.  Both allow the use of addition, but integer complexity
supplements this by allowing the use of multiplication, while addition chain
length supplements this by allowing the reuse of any number at no additional
cost once it has been constructed.  Furthermore, both measures are approximately
logarithmic; the function $\acl(n)$ satisfies
\[ \log_2 n \le \acl(n) \le 2\log_2 n. \]
 
A difference worth noting is that unlike integer complexity, there is no known
way to compute addition chain length via dynamic programming.  Specifically, to
compute integer complexity this way, one may use the fact that for any $n>1$,

\begin{displaymath}
\cpx{n}=\min_{\substack{a,b<n\in \mathbb{N} \\ a+b=n\ \mathrm{or}\ ab=n}}
	\cpx{a}+\cpx{b}.
\end{displaymath}

By contrast, addition chain length seems to be harder to compute.  Suppose we
have a shortest addition chain $(a_0,\ldots,a_{r-1},a_r)$ for $n$; one might
hope that $(a_0,\ldots,a_{r-1})$ is a shortest addition chain for $a_{r-1}$, but
this need not be the case.  An example is provided by the addition chain
$(1,2,3,4,7)$; this is a shortest addition chain for $7$, but $(1,2,3,4)$ is not
a shortest addition chain for $4$, as $(1,2,4)$ is shorter.   Moreover, there is
no way to assign to each natural number $n$ a shortest addition chain
$(a_0,\ldots,a_r)$ for $n$ such that $(a_0,\ldots,a_{r-1})$ is the addition
chain assigned to $a_{r-1}$ \cite{TAOCP2}. This can be an obstacle both to
computing addition chain length and proving statements about addition chains.

Despite this, if we define an \emph{addition chain defect}, analogous to
integer complexity defects, we find that they act quite similarly.

As mentioned above, the set of all integer complexity defects
is a well-ordered subset of the real numbers, with order type $\omega^\omega$.
If we define
\[\dft^{\acl}(n):=\acl(n)-\log_2 n,\]
then it was shown in \cite{adcwo} that this is also true for addition chain
defects:

\begin{thm}[Addition chain well-ordering theorem]
\label{adcwothm}
The set \[\D^\acl := \{ \dft^{\acl}(n) : n \in \mathbb{N} \},\] considered as a
subset of the real numbers, is well-ordered and has order type $\omega^\omega$.
\end{thm}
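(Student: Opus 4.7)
The plan is to mirror the strategy used for integer complexity defects in \cite{paperwo}, with $2$ playing the role of $3$ and addition chains playing the role of $(1,+,\cdot)$-expressions. Since $\acl(2^k)=k$, powers of $2$ saturate the lower bound $\acl(n)\ge\log_2 n$, so they play the role that powers of $3$ play for $\cpx{n}$. First I would define a notion of ``low-defect addition chain construction'': a combinatorial recipe that, on input a tuple of nonnegative exponents $(k_1,\ldots,k_r)$, produces an addition chain for a number of the form $f(2^{k_1},\ldots,2^{k_r})\cdot 2^{k_{r+1}}$, using only a bounded number of additions beyond the $k_1+\cdots+k_{r+1}$ doublings. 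The core classification step is to show that, for every real $s$, there is a finite set $\sS^\acl_s$ of such constructions such that every $n$ with $\dft^\acl(n)\le s$ arises as a specialization of some construction in $\sS^\acl_s$. This would be proved by induction on the defect, using the fact that an efficient addition chain for a non-power-of-$2$ must contain a ``non-doubling'' step, and extracting from any near-optimal chain a controlled combinatorial skeleton.

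Given such a representation, the order-type analysis proceeds as in the integer complexity case. For a fixed construction with $r$ exponent parameters, the defects $\dft^\acl\bigl(f(2^{k_1},\ldots,2^{k_r})\cdot 2^{k_{r+1}}\bigr)$ are governed by the quantity $\log_2 f(2^{k_1},\ldots,2^{k_r})-\sum k_i$, which admits a lexicographic analysis in the $k_i$ as they go to infinity: each $k_i\to\infty$ contributes one $\omega$ factor to the order type, giving a set of order type at most $\omega^r$. Taking the union over the finite family $\sS^\acl_s$ yields a well-ordered initial segment of $\D^\acl$ of order type at most $\omega^\omega$. For the matching lower bound, I would exhibit explicit families realizing order type $\omega^r$ for each $r$, for instance numbers of the form $2^{k_r}+2^{k_{r-1}}+\cdots+2^{k_1}+1$ with widely separated exponents, and verify by a direct addition-chain length computation that their defects are strictly ordered in the required lexicographic way.

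The main obstacle is exactly the one flagged in the surrounding text: addition chain length is not computable by dynamic programming, so the clean recursive decomposition of an optimal $(1,+,\cdot)$-expression used in \cite{paperwo} has no direct analogue. Truncating a shortest chain for $a_r$ need not give a shortest chain for $a_{r-1}$, and there is no canonical assignment of a shortest chain to each natural number that is compatible with truncation. The workaround is to give up on recursion and argue structurally: show that if $\dft^\acl(n)\le s$ then any shortest addition chain for $n$ must have at most $B(s)$ non-doubling steps, and then bound the number of combinatorial types of such chains by a counting argument, each type determining one of the low-defect constructions above. This rigidity-at-low-defect result is the technical heart of the argument and is what replaces the dynamic-programming decomposition available in the integer complexity setting.
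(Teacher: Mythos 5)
First, a framing note: the present paper does not prove Theorem~\ref{adcwothm} at all --- it is quoted from \cite{adcwo} --- so there is no in-paper proof to compare against; I am therefore judging your sketch on its own terms. Its overall architecture (a finite covering of $\{n:\dft^{\acl}(n)\le s\}$ by multi-parameter constructions, an $\omega^r$ bound per construction, a union bound for the upper bound, and explicit families for the lower bound) is the right shape and parallels \cite{paperwo}. But your lower bound has a genuine gap. You propose to take families such as $2^{k_r}+\cdots+2^{k_1}+1$ and ``verify by a direct addition-chain length computation'' that their defects realize order type $\omega^r$. No such verification is available: there is no closed form for $\acl$ of such numbers for general $r$, the known lower bounds (e.g.\ Sch\"onhage's $\acl(n)\ge\log_2 n+\log_2\nu(n)-2.13$, with $\nu(n)$ the number of ones in binary) sit far below the na\"ive value $k_r+r$, and for many exponent choices the na\"ive chain is genuinely not optimal. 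The entire difficulty of these well-ordering theorems is that one cannot compute the complexity of an infinite family directly. The mechanism that actually works --- described in Section~\ref{substintro} of this paper for the integer-complexity case, and used in \cite{adcwo} --- is a bootstrap: having established the upper bound on the order type of $\D^\acl\cap[0,s-1]$, one notes that whenever a member of the family fails to achieve the na\"ive length its defect drops by at least $1$ into that lower range; so if the exceptional parameter set had order type $\omega^r$ it would inject an $\omega^r$ into a set already known to have strictly smaller order type. Hence the non-exceptional set has order type $\omega^r$ and is cofinal. Without this step your lower bound does not go through.

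Two smaller repairs to the upper-bound half. Your claim that a short-defect chain has at most $B(s)$ non-doubling steps is true but not for the naive reason: a single non-doubling step such as $a_k=a_{k-1}+a_{k-2}$ can contribute arbitrarily little to the defect. One instead bounds pairs of consecutive steps: if step $k$ is not a doubling then $a_k\le a_{k-1}+a_{k-2}\le 3a_{k-2}$, so the two steps jointly contribute at least $2-\log_2 3>0$. Second, the ``combinatorial skeleton'' of a chain with $B$ non-doubling steps has more free parameters than the lengths of the doubling runs, because each non-doubling step may draw its two summands from arbitrary positions inside earlier runs; those positions must be absorbed as extra exponent parameters (still boundedly many in terms of $B(s)$, so the $\omega^r$ union bound survives, but the accounting has to be done). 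You correctly identify the absence of a dynamic-programming decomposition as the structural obstacle, and your proposed workaround --- classify near-optimal chains by skeleton rather than recurse on optimal subchains --- is the right instinct; it is the lower bound, not the classification, where your plan as written would fail.
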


Moreover, as also shown in \cite{adcwo}, stabilization has its analogue as
well:
\begin{thm}
\label{adcstab}
For any natural number $n$, there exists $K\ge 0$ such that, for any $k\ge K$,
\[ \acl(2^k n)=(k-K)+\acl(2^K n). \]
\end{thm}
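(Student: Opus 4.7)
The plan is to imitate the standard proof of integer-complexity stabilization (Theorem~\ref{stab}), which works because the analogous ingredients are available for addition chains: a ``doubling'' construction that gives an upper bound, plus the logarithmic lower bound. No part of this requires the more delicate defect machinery of the rest of the paper.

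First I would record the key inequality $\acl(2m)\le \acl(m)+1$, which follows immediately by appending the step $a_{r+1}=a_r+a_r$ to any shortest addition chain $(a_0,\ldots,a_r)$ for $m$. Applying this with $m=2^kn$ gives
\[\acl(2^{k+1}n)\le \acl(2^kn)+1,\]
so, setting $c_k:=\acl(2^kn)-k$, the sequence $(c_k)_{k\ge 0}$ is a non-increasing sequence of integers.

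Next I would use the lower bound $\acl(m)\ge\log_2 m$ to obtain
\[c_k=\acl(2^kn)-k\ge \log_2(2^kn)-k=\log_2 n,\]
which is bounded below independently of $k$. A non-increasing sequence of integers bounded below must be eventually constant, so there exists a least $K\ge 0$ with $c_k=c_K$ for all $k\ge K$. Unwinding the definition of $c_k$ gives exactly
\[\acl(2^kn)=(k-K)+\acl(2^Kn)\qquad(k\ge K),\]
which is the claim.

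There is really no serious obstacle here; the only thing to notice is that the argument carries over verbatim from integer complexity because doubling plays the role in addition chains that multiplication by $3$ plays in the $(1,+,\cdot)$ setting, and in both cases it gives a one-step upper bound that matches the base of the logarithm in the lower bound. The only non-effective feature of the proof is that, unlike Theorem~\ref{stab}, it does not produce an explicit $K$: we only know $K$ exists by monotone convergence of an integer sequence, and as noted in the paragraphs preceding the theorem, the lack of a dynamic-programming recursion for $\acl$ means that computing such a $K$ is considerably less routine than in the $\cpx{\cdot}$ case.
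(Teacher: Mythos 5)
Your proof is correct: the one-step upper bound $\acl(2m)\le\acl(m)+1$ together with the lower bound $\acl(m)\ge\log_2 m$ makes $\acl(2^kn)-k$ a non-increasing integer sequence bounded below, hence eventually constant, which is exactly the claim. This is the same monotone-integer-sequence argument used for Theorem~\ref{stab} and in \cite{adcwo}, where this theorem is actually proved (the present paper only cites it), so nothing further is needed.
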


So we can then ask if the results of this paper will translate to addition
chains.  In \cite{adcwo} it was conjectured:
\begin{conj}
\label{adcconjk}
For each whole number $k$, $\mathscr{D}^\acl\cap[0,k]$ has order type
$\omega^k$.
\end{conj}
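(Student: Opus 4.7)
The plan is to mirror the integer-complexity strategy of \cite{paperwo} and the present paper, adapted from base $3$ to base $2$: develop an addition-chain analogue of the low-defect polynomial method, and then read off the order type from the resulting representation theorem.

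First, I would prove a representation theorem: for each $s \ge 0$, there is a finite collection of parameterized families $F_1,\ldots,F_m$, each of the shape $n = F_j(2^{k_1},\ldots,2^{k_{r_j}})\cdot 2^{k_{r_j+1}}$ built from a bounded number of genuine additions together with ``free'' doublings, such that every $n$ with $\dft^\acl(n) \le s$ is captured by some $F_j$, with the number of additions bounded by $\lfloor s \rfloor$. As was the case for the analogous statement $\clDx(\omega^k)=k$ in the integer-complexity setting, only the crude form of the method (rather than the substantial-polynomial refinement developed in this paper) should be needed.

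Given such a representation theorem, the upper bound $\ord(\D^\acl \cap [0,k]) \le \omega^k$ is essentially bookkeeping: a family with $r$ free parameters contributes at most an embedded copy of $\omega^r$ to $\D^\acl$ (using Theorem~\ref{adcstab} to compare defects as the parameters vary), and defect $\le k$ forces $r \le k$. For the matching lower bound, I would exhibit nested sequences: starting from any $n$ of defect $d$, the defects $\dft^\acl(2^j n + 1)$ are bounded above by a sequence converging to $d+1$, so iterating such constructions $k$ times embeds a copy of $\omega^k$ into $\D^\acl \cap [0,k]$.

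The hard step is the representation theorem. Integer-complexity low-defect polynomials rely crucially on the dynamic-programming decomposition $\cpx{n} = \min_{a+b=n \text{ or } ab=n}(\cpx{a}+\cpx{b})$, which, as noted in the paper via the $(1,2,3,4,7)$ example, has no addition-chain counterpart. One therefore cannot simply port Theorem~29 of \cite{paper1}. Instead, I would argue directly with addition chains: classify shortest chains of defect $\le s$ up to an equivalence that quotients out ``free'' doublings (the base-$2$ analogue of the powers of $3$ in the integer-complexity setting) and show that only finitely many templates survive. Once such a classification is in hand, the remaining ordinal accounting should parallel \cite{paperwo} and this paper.
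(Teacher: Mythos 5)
The statement you are proving is not a theorem of this paper at all: it is Conjecture~\ref{adcconjk}, quoted from \cite{adcwo} as an \emph{open problem}, and the paper explicitly says it is ``not clear what sort of conditions would be needed'' to establish it for addition chains. Your proposal is a plausible roadmap, but it is a roadmap whose central waypoint is precisely the unsolved problem. You correctly identify the representation theorem (a finite covering of $\{n : \dft^{\acl}(n)\le s\}$ by parameterized families with at most $\lfloor s\rfloor$ ``genuine additions'') as the hard step, and you correctly note that the integer-complexity construction cannot be ported because shortest addition chains do not decompose via dynamic programming. But your resolution --- ``classify shortest chains of defect $\le s$ up to an equivalence that quotients out free doublings and show that only finitely many templates survive'' --- is a restatement of the goal, not an argument. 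The obstruction is concrete: in a shortest chain for $n$, the intermediate elements need not be efficiently constructed themselves (the $(1,2,3,4,7)$ example), so one cannot induct on defect to bound the structure of the chain; no mechanism is offered for why the putative finitely many templates exist.

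The lower bound also has a gap. From $\acl(2^jn+1)\le\acl(2^jn)+1$ you only get that $\dft^{\acl}(2^jn+1)$ is \emph{bounded above} by a sequence increasing to $\dft^{\acl}(n)+1$; this does not embed a copy of $\omega$ unless the defects actually attain (or cluster at) values marching up to that limit. For one level of iteration this can be salvaged ($\acl(2^j+1)=j+1$ follows from the trivial lower bound $\acl(m)\ge\lceil\log_2 m\rceil$), but at the second level you already need that $\acl(2^{i+j}+2^j+1)$ equals the na\"ive count $i+j+2$ for ``most'' $(i,j)$, and that is exactly the kind of ``the exceptional set is small'' statement (the analogue of Proposition~\ref{usu}) that constitutes the real content of the integer-complexity proofs and is not known for addition chains. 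So both halves of your argument reduce to unproven analogues of the machinery this paper develops only in the integer-complexity setting; as written, the proposal does not prove the conjecture.
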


We could, then, ask if an even stronger statement might hold, as per this paper:
\begin{conj}
\label{adcconj}
Given $1\le \alpha<\omega^\omega$ an ordinal and $k$ a whole number,
\[\overline{\mathscr{D}^\acl}(\omega^k \alpha)={\mathscr{D}^\acl}(\alpha)+k.\]
Furthermore, 
\[ \overline{\mathscr{D}^\acl_\st} = \mathscr{D}^\acl_\st + \Nn. \]
\end{conj}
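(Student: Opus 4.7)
The plan is to translate the strategy of this paper into the addition chain setting, with powers of $3$ replaced by powers of $2$ and $(1,+,\cdot)$-expressions replaced by addition chains. The first step is to develop an addition-chain analogue of substantial low-defect polynomials: a finite family of structured building blocks, parameterized by exponents $k_1,\ldots,k_r$, such that every addition chain of defect below a given bound arises by specializing a member of the family to powers $2^{k_i}$, and moreover such that for all sufficiently large $k_i$ the value of $\acl$ on the resulting integer actually matches the naive upper bound read off from the structure. An analogue of low-defect polynomials already exists in \cite{adcwo} and suffices to prove well-ordering of $\mathscr{D}^\acl$; what remains is to isolate an appropriate substantial subfamily, in direct parallel with what is done here for integer complexity.

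With such a family in hand, one would run an induction on $k$ exactly as for Theorem~\ref{cj8weak-intro}: the base case $k=0$ is trivial, and passing from $k$ to $k+1$ corresponds to introducing one extra variable slot, which in the naive upper bound shifts defects by exactly $1$ (mirroring the step $\cpx{3n}\le\cpx{n}+3$ in the integer complexity case, with $\acl(2n)\le\acl(n)+1$ replacing it). Combined with Theorem~\ref{adcstab}, this would yield the first half of the conjecture, $\overline{\mathscr{D}^\acl}(\omega^k\alpha)=\mathscr{D}^\acl(\alpha)+k$. The closure identity $\overline{\mathscr{D}^\acl_\st}=\mathscr{D}^\acl_\st+\Nn$ would then follow by adapting the arguments used to derive Theorem~\ref{closure} from Theorem~\ref{cj8weak-intro}: the inclusion $\mathscr{D}^\acl_\st+\Nn\subseteq\overline{\mathscr{D}^\acl_\st}$ is straightforward, while the reverse inclusion comes from the structure theorem combined with Theorem~\ref{adcstab}.

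The main obstacle, and the reason Conjecture~\ref{adcconj} remains only a conjecture, is the absence of a dynamic programming recursion for $\acl$. For integer complexity, the identity $\cpx{n}=\min_{a+b=n\text{ or }ab=n}\cpx{a}+\cpx{b}$ underlies every lower bound argument; for addition chains no such decomposition holds, since shortest chains need not contain shortest sub-chains, as witnessed by $(1,2,3,4,7)$. This makes the sharpness claim — that the naive upper bound on $\acl$ of a substantial family is actually \emph{attained} — substantially harder to establish, because one cannot freely swap subexpressions or compare complexities of factors and summands. Indeed, producing any nontrivial exact lower bound on $\acl$ for an infinite parameterized family of integers is a well-known difficulty in the subject, so a successful proof will likely require either a new structural lower-bound tool for $\acl$, or a workaround that replaces exact sharpness by asymptotic control still fine enough to pin down the limit-point structure of $\overline{\mathscr{D}^\acl}$.
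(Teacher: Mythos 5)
This statement is a conjecture in the paper, not a theorem; the authors offer no proof of it, and indeed they explicitly remark that it is unclear what conditions on addition chains would be needed to obtain results of this strength. Your proposal is therefore not being measured against an existing argument, and, to your credit, you have not pretended to supply one: what you have written is a research plan together with an accurate diagnosis of why the plan cannot currently be executed. That diagnosis matches the paper's own framing. The integer-complexity proof of Theorem~\ref{cj8} runs through Proposition~\ref{usu}, whose proof in turn leans on the full machinery of good coverings (Theorem~\ref{covering}), the congruence bookkeeping of Proposition~\ref{subclosure}, and the order-type bounds of Proposition~\ref{step3}; all of these ultimately rest on the dynamic-programming recursion for $\cpx{n}$ and the resulting ability to decompose an efficient representation of $n$ into efficient representations of its pieces. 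As you say, $\acl$ admits no such recursion, so the addition-chain analogue of the ``usually efficient'' step --- the claim that a substantial family attains its naive upper bound off a small exceptional set --- has no known proof, and without it the inductive shift-by-one argument and the closure identity both collapse.

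Two further cautions if you pursue this. First, the reduction from the conjecture to the sharpness claim is itself not routine: in the integer-complexity case the argument is not a simple induction on $k$ but a bootstrap (upper bound on order type $\Rightarrow$ usual efficiency $\Rightarrow$ lower bound on order type), and one would need to check that the covering theorems of \cite{adcwo} deliver enough structure --- in particular an analogue of Proposition~\ref{cj9prop} relating degree to the integer part of the defect --- before the bootstrap can even be set up. Second, the paper notes that there is no modulo-$3$ decomposition on the addition-chain side because the basic inequality is $\acl(2n)\le\acl(n)+1$; this simplifies some statements but removes the congruence obstruction (Proposition~\ref{fdft}, Proposition~\ref{disjbasic}) that the integer-complexity proof uses to rule out defects landing in forbidden intervals, so the contradiction at the heart of Proposition~\ref{usu} would need a different source. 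In short: your outline is a reasonable account of how one would \emph{want} to prove the conjecture, but it contains no proof, and the missing step is precisely the one the authors identify as open.
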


We could also ask the same for restricted types of addition chains, such as star
chains or Hansen chains.  Actually,
Theorems~\ref{adcwothm} and \ref{adcstab} were proven for these types of chain
as well in \cite{adcwo}, and
for other sorts obeying quite general conditions, but it is not clear what sort
of conditions would be needed to achieve stronger results such as
Conjecture~\ref{adcconjk} or \ref{adcconj}.

Note that there is no equivalent of the modulo-$3$ results for addition chains,
due to our basic inequality being $\acl(2n)\le \acl(n)+1$, rather than
$\cpx{3n}\le\cpx{n}+3$; see Section~1E of \cite{intdft} for a more detailed
discussion of this point.

\end{document}